\documentclass[leqno]{amsart}
\usepackage{cases}
\usepackage{cases}
\usepackage{amsmath}
\usepackage{amsmath}
\usepackage{amsmath}
 \usepackage{graphicx}%%%%%%%%%%%%%% ÌíŒÓÍŒÏñ %%%%%%%%%%%%%%%%%%%%%%%5
 \usepackage{mathrsfs}
\usepackage{bbm}
\usepackage{amsfonts}
\usepackage{amssymb}
\usepackage{ams,amsmath,amsthm}

\usepackage{hyperref}

\def\opn#1#2{\def#1{\operatorname{#2}}} % to make operators
\opn\chara{char} \opn\length{\ell}
%\opn\pd{pd}
%\opn\rk{rk}
\opn\projdim{proj\,dim} \opn\injdim{inj\,dim} \opn\rank{rank}
\opn\depth{depth} \opn\grade{grade} \opn\height{height}
\opn\embdim{emb\,dim} \opn\codim{codim}

\opn\Tr{Tr} \opn\bigrank{big\,rank}
\opn\superheight{superheight}\opn\lcm{lcm}
\opn\trdeg{tr\,deg}%
\opn\reg{reg} \opn\lreg{lreg}
%------------------------------------------------
% Linear algebra and homology, endo- and automorphisms
%
\opn\Ker{Ker} \opn\Coker{Coker} \opn\Im{Im} \opn\Hom{Hom}
\opn\Tor{Tor} \opn\Ext{Ext} \opn\End{End} \opn\Aut{Aut} \opn\id{id}

\opn\nat{nat}
\opn\pff{pf}%   \pf exists already
\opn\Pf{Pf} \opn\GL{GL} \opn\SL{SL} \opn\mod{mod} \opn\ord{ord}
% Names with a meaning

%------------------------------------------------

\def\Implies{\ifmmode\Longrightarrow \else
     \unskip${}\Longrightarrow{}$\ignorespaces\fi}
\def\implies{\ifmmode\Rightarrow \else
     \unskip${}\Rightarrow{}$\ignorespaces\fi}
\def\iff{\ifmmode\Longleftrightarrow \else
     \unskip${}\Longleftrightarrow{}$\ignorespaces\fi}

\let\:=\colon

\newtheorem{Theorem}{Theorem}[section]
\newtheorem{Lemma}[Theorem]{Lemma}

\newtheorem{Remark}[Theorem]{Remark}

\newtheorem{Example}[Theorem]{Example}

\newtheorem{Algorithm}[Theorem]{Algorithm}
\newtheorem{Assumption}[Theorem]{Assumption}

\theoremstyle{definition}

%
% We like the var forms of some greek letters (as taught in German schools)
%\let\epsilon=\varepsilon
%\let\phi=\varphi
%\let\kappa=\varkappa
%           We print on A4 paper
\textwidth=15cm \textheight=22cm \topmargin=0.5cm
\oddsidemargin=0.5cm \evensidemargin=0.5cm \pagestyle{plain}
\opn\ini{in} \opn\inm{inm} \opn\Sym{Sym} \opn\diag{diag}
\opn\Ii{(i)} \opn\Iii{(ii)}

\title[Convergence of AFEM for OCPs in $L^2$-norm]{ Convergence of $L^2$-norm based adaptive finite element method for elliptic optimal control problems}
\author{Wei Gong $^*$}
\author{Ningning Yan $^\diamond$}
\author{Zhaojie Zhou $^\dag$}
\thanks{$^*$ NCMIS, LSEC, Institute of Computational Mathematics, Academy of Mathematics and Systems Science, Chinese Academy of Sciences, Beijing 100190, China.
 Email: {\tt wgong@lsec.cc.ac.cn}}
\thanks{$^\diamond$ LSEC, Institute of Systems Science, Academy of Mathematics and Systems Science, Chinese Academy of Sciences, Beijing 100190, China.
 Email: {\tt ynn@amss.ac.cn}}
\thanks{$^\dag$ School of Mathematical Sciences, Shandong Normal University, Jinan 250014, China.
 Email: {\tt zzj534@amss.ac.cn}}
\date{Today is \today}

\begin{document}
\maketitle

{\bf Abstract:}\hspace*{10pt} { This paper aims to study the convergence of adaptive finite element method for control constrained elliptic optimal control problems under $L^2$-norm. We prove the contraction property and quasi-optimal complexity for the $L^2$-norm errors of both the control, the state and adjoint state variables with $L^2$-norm based AFEM, this is in contrast to and improve our previous work \cite{Gong_Yan} where convergence of AFEM based on energy norm had been studied and suboptimal convergence for the control variable was obtained and numerically observed. For the discretization we use variational discretization for the control and piecewise linear and continuous finite elements for the state and adjoint state. Under mild assumptions on the initial mesh and the mesh refinement algorithm we prove the optimal convergence of AFEM for the control problems, numerical results are provided to support our theoretical findings.    }

{{\bf Keywords:}\hspace*{10pt} optimal control problem,
 elliptic equation, control constraints, adaptive finite element method, $L^2$-norm error estimates, convergence and optimality }

{\bf Subject Classification:} 49J20, 65K10, 65N12, 65N15, 65N30.

%%%%%%%%%%%%%%%%%%%%%%%%%%%%%%%%%%%%%%%%%%%
\section{Introduction}
\setcounter{equation}{0}

%Adaptive finite element method (AFEM for short), aiming to distribute more mesh nodes around the area where  singularity of the solution happens to save the computational cost, was firstly proposed by Babu\v{s}ka and Rheinboldt (\cite{Babuska}) and becomes nowadays a popular approach in the community of engineering and scientific computing. The success of AFEM lies in reliable and efficient a posteriori error estimators derived for the underlying problems, which are used as error indicators to detect the location of singularity.  We refer to \cite{Verfuth} for an overview on various types of error estimators and the applications to different kind of problems. 

In this paper we intend to prove the convergence of adaptive finite element method (AFEM for shot) for solving optimal control problems (OCPs) governed by partial differential equations. The adaptive finite element procedure for solving OCPs  consists of the following loop
$$\mbox{SOLVE}\rightarrow \mbox{ESTIMATE}\rightarrow \mbox{MARK}\rightarrow \mbox{REFINE}.$$
The SOLVE step outputs the finite element solutions of the optimal control problems by solving the reduced optimization problems with appropriate optimization algorithms. The ESTIMATE step is based on the a posteriori error estimators which should be reliable and may also be efficient, while the step MARK selects the set of elements for refinement  by some strategies, like MAX strategy (\cite{Mekchay}) or D\"orfler's marking strategy (\cite{Dorfler}), based on the error indicators derived from the contributions of the control, the state and adjoint state approximations. The step REFINE can be done by using iterative or recursive bisection of elements with the minimal refinement condition (see \cite{Stevenson,Verfuth}). 

Nowadays adaptive finite element method is viewed as one of the most efficient methods for solving partial differential equations and has been proved to possess optimal computational complexity.  We refer to \cite{Dorfler,Binev,Morin,Nochetto,Mekchay,Cascon} for convergence analysis and \cite{Binev,Stevenson,Rob,Cascon} for optimal cardinality, which provide solid theoretical support for the success of AFEM when applied to solve second order elliptic equations. The applications of AFEM to optimal control problems differ from the error estimators used for  the adaptive mesh refinement, here we mention the work \cite{Liu2} of Liu, Yan for residual type a posteriori error estimates and \cite{Becker} of Becker, Kapp, Rannacher  for dual-weighted goal-oriented adaptivity. We also refer to \cite{Hoppe,Hoppe1,Li,Liu5,Liu3,Liu4,LiuYan08book} for the extensions and applications in distributed and boundary control problems, Stokes control problems, time-dependent control problems and so on. To prove the convergence and optimality of AFEM we require both the reliability and efficiency of the error estimators. Recently, Kohls, R\"{o}sch and Siebert derived in \cite{Kohls} an error equivalence property with respect to the $L^2$-norm error for the control and energy norm errors for the state and adjoint state, which helps to derive reliable and efficient a posteriori error estimators for optimal control problems with either variational discretization or full control discretization.

%Although AFEM has been successfully applied for more than three decades, the convergence analysis is rather recent which started with D\"{o}rfler \cite{Dorfler} and was further studied in \cite{Binev,Morin,Nochetto,Mekchay,Cascon}, and is still far from mature. Optimal complexity of AFEM, which measures the optimal computational cost of AFEM, is another important issue in AFEM and was firstly addressed by Binev et al. \cite{Binev} and further studied by Stevenson (\cite{Stevenson,Rob}). In \cite{Cascon} the authors introduced the notion of quasi error which consists of the sum of the energy error and the scaled estimator, and proved the contraction of the errors and quasi-optimal cardinality of a practical adaptive algorithm which avoids marking for oscillation (\cite{Dorfler}) and circumvents the interior node property of mesh refinement (\cite{Morin,Nochetto}). %We also refer to \cite{Dai} for the convergence analysis of adaptive finite element eigenvalue computation.

Although the convergence theory of AFEM for boundary value problems is almost complete, the convergence results of AFEM for solving optimal control problems are scarce and far from satisfactory. Here we give some comments on existing results. Gaevskaya et. al studied in \cite{Gaevskaya} the convergence of AFEM for OCPs with piecewise constant control discretization. They proved an error reduction property for the optimal control, state,  adjoint state and the associated co-control variables with  some additional requirements on the strict complementarity of the continuous problem and the non-degeneracy property of the discrete control problem, and the marking strategy was extended to include the discrete free boundary between the active and inactive control sets. Becker and Mao (\cite{Mao}) gave a convergence proof for the adaptive algorithm by viewing the control problems as a nonlinear elliptic system of the state and adjoint variables, the adaptive algorithm presented there involved  the marking of data oscillation. In  \cite{Kohls1} the authors proved the plain convergence of the adaptive algorithm without   convergence rate and optimality. Recently, Gong and Yan (\cite{Gong_Yan}) gave a rigorous convergence proof for the adaptive finite element algorithm of elliptic optimal control problem in an optimal control framework. The main idea is to show that the energy norm errors of the state and adjoint state variables are equivalent to the boundary value approximations of the state and adjoint state equations up to a higher order term, so that the standard convergence results of AFEM for elliptic boundary value problems can be used. Later on, the ideas were used to prove the convergence of an adaptive multilevel correction finite element method for elliptic optimal control problem. We also mention that in \cite{Leng} Chen and Leng proved the convergence and quasi-optimality of AFEM for an elliptic control problem with integral type control constraint by using piecewise constant control discretization.

We remark that all the results mentioned above are based on AFEM in energy norm error for both boundary value problems and OCPs. The motivation to study the convergence of $L^2$-norm based AFEM for solving OCPs in current paper is two fold. Firstly, it is of great theoretical importance to prove the convergence and optimality of AFEM for the control variable.  In \cite{Gong_Yan} the authors showed that the convergence of AFEM based on energy norm was suboptimal for the control and the numerical experiments confirmed this sub-optimality. Recall that in the a priori error estimates for optimal control problems (\cite{Hinze05COAP}), one can obtain optimal convergence of the control variable by using only optimal $L^2$-norm error estimates for the state and the adjoint state variables. This strongly suggests a posteriori error estimates and adaptive algorithm in $L^2$-norm. Secondly, it is of practical use to study $L^2$-norm based adaptive finite element method for OCPs. In \cite{Schneider} the authors considered a posteriori error estimates for elliptic optimal control problems in $L^2$-norm by extending the ideas of \cite{Kohls}. It has been pointed out in \cite{Schneider} that $L^2$-norm based error estimators deliver better reduction of the error $\|u-u_h\|_{0,\Omega}$ and  mesh node distribution than energy norm based error estimator, where $u$ and $u_h$ are continuous and discrete control variables, respectively. 

Since the Galerkin approximation of second order elliptic equation is only the best approximation in energy norm, it is not straightforward to prove the convergence of AFEM in $L^2$-norm. Here we mention some attempts to prove convergence of AFEM under weaker norms other than the global energy norm. Morin et al. \cite{Veeser} obtained plain convergence of AFEM for controlling weak norms under quite general assumptions on the marking strategy and norm of interest. Demlow studied in  \cite{Demlow_2010} the convergence of AFEM under local energy norm error. Demlow and Steveson proved in \cite{Demlow} the contraction property and optimal convergence rate of AFEM for controlling $L^2$-norm with D\"orfler's marking strategy under appropriate mesh grading conditions. The results of \cite{Demlow} were then used to prove the quasi-optimality of adaptive finite element methods for controlling local energy errors in \cite{Demlow_2015}. The convergence analysis of AFEM in $L^2$-norm presented in \cite{Demlow} is based on the equivalence between the weighted energy norm error and $L^2$-norm error under additional assumption on the mesh grading. One also need to impose $H^2$-regularity for the underline elliptic equation for the derivation of efficient and reliable a posteriori error estimates. 

In this paper we borrow some ideas of \cite{Demlow} to prove the convergence of $L^2$-norm based AFEM for OCPs. Here we consider a general second order elliptic equation with variable coefficients other than the Laplacian in \cite{Demlow}. We remark that the application of results in \cite{Demlow} to OCPs is not trivial as we need to deal with the coupling of the state, the adjoint and the control variables in an appropriate way. Moreover, compared to \cite{Gong_Yan} we do not use the equivalence between the optimal control approximation and the boundary value approximations but follow the standard approaches to prove the convergence of AFEM as done in \cite{Cascon}. We show that the finite element solutions of the optimal control problem are nearly best approximations to the continuous ones from the finite element space in $L^2$-norm up to an oscillation term. Moreover, we show the contraction property and quasi-optimal complexity for the $L^2$-norm errors of both the control, the state and adjoint state variables with $L^2$-norm based AFEM, this improves the known result of \cite{Gong_Yan} for energy norm based AFEM. In our opinion, the results obtained in current paper together with \cite {Gong_Yan}  complete the convergence theory of AFEM for solving elliptic OCPs with variational control discretization.

%Due to \cite{Cascon}, the procedure REFINE here is not required to satisfy the interior node property of \cite{Morin}. There are different kinds of adaptive algorithms which differ from the marking strategies (see \cite{Mekchay,Morin,Nochetto}). Here we apply D\"{o}rfler's marking strategy introduced in \cite{Dorfler}, which marks only the error estimator to obtain the set  of marked elements and avoids the marking for oscillation.

The rest of the paper is organized as follows. In Section 2 we introduce the model elliptic optimal control problem and its finite element approximation, we also derive efficient and reliable a posteriori error estimates in $L^2$-norm. The adaptive algorithm based on D\"{o}rfler's marking strategy is also presented. In Section 3 we give some connections between the weighted energy norm errors and the $L^2$-norm errors, which enable us to give a convergence analysis of the AFEM for optimal control problems, the quasi-optimal cardinality is proved in Section 4. Numerical experiments are carried out in Section 5 to validate our theoretical result. 

Let $\Omega\subset\mathbb{R}^d$ ($d=2,3$) be a bounded, convex polygonal or polyhedral domain. We denote by $W^{m,q}(\Omega)$ the usual Sobolev space of order $m\geq 0$, $1\leq q<\infty$ with norm $\|\cdot\|_{m,q,\Omega}$. For $q=2$ we denote $W^{m,q}(\Omega)$ by $H^m(\Omega)$ and $\|\cdot\|_{m,\Omega}=\|\cdot\|_{m,2,\Omega}$, which is a Hilbert space. We set $(\cdot,\cdot)$ as the inner product in $L^2(\Omega)$. We denote $C$ a generic positive constant which may stand for different values at its different occurrences but does not depend on mesh size. We use the symbol $A\lesssim B$ to denote $A\leq CB$ for some constant $C$ that is independent of mesh size. If in addition $B\lesssim A$ holds we use the symbol $A\simeq B$.

%%%%%%%%%%%%%%%%%%%%%%%%%%%%%%%%%%%%%%%%%%%%%%%
\section{Adaptive finite element method for the optimal control problem}
\setcounter{equation}{0}
In this section we consider the following elliptic optimal control problem:
\begin{eqnarray}
\min\limits_{u\in U_{ad}}\ \ J(y,u)={1\over
2}\|y-y_d\|_{0,\Omega}^2 +
\frac{\alpha}{2}\|u\|_{0,\Omega}^2\label{OCP}
\end{eqnarray}
subject to
\begin{equation}\label{OCP_state}
\left\{\begin{array}{llr}
\mathcal{L} y=f+u \quad&\mbox{in}\
\Omega, \\
\ \ y=0  \quad &\mbox{on}\ \partial\Omega,
\end{array}\right.
\end{equation}
where $\alpha>0$ is a fixed parameter, $f$ is a given function, $y_d\in L^2(\Omega)$ is the desired state and the linear second order elliptic operator $\mathcal{L}$ is defined as follows:
\begin{eqnarray}
\mathcal{L}y:=-\sum\limits_{i,j=1}^d\frac{\partial}{\partial x_j}(a_{ij}\frac{\partial y}{\partial x_i})+a_0y.\nonumber
\end{eqnarray}
We require that $0\leq a_0<\infty$, $a_{ij}\in W^{1,\infty}(\Omega)$ ($i,j=1,...,d$) and $(a_{ij})_{d\times d}$ is symmetric and positive definite. We set $A=(a_{ij})_{d\times d}$ and $A^*$ its adjoint. We also denote $\mathcal{L}^*$ the adjoint operator of $\mathcal{L}$. Moreover, $U_{ad}$ is the admissible control set with bilateral control constraints:
\begin{eqnarray}
U_{ad}:= \Big\{u\in L^2(\Omega), \quad a\leq u\leq b\ \mbox{a.e.}\ \mbox{in}\  \Omega\Big\},\nonumber
\end{eqnarray}
where $a,b\in \mathbb{R}$ and $a<b$. 

We introduce the affine linear control-to-state mapping $S:L^2(\Omega)\rightarrow H_0^1(\Omega)$ such that for each $f+u\in L^2(\Omega)$ we associate the unique solution of problem (\ref{OCP_state}) with $y=S(f+u)$, i.e.,
\begin{eqnarray}
a(y,v)=(f+u,v)\quad \forall v\in H_0^1(\Omega),\label{OCP_state_weak}
\end{eqnarray}
where
\begin{eqnarray}
%a(y,v):=\int_\Omega(\sum\limits_{i,j=1}^da_{ij}\frac{\partial y}{\partial x_i}\frac{\partial v}{\partial x_j}+a_0yv)dx,\quad \forall y,v\in H_0^1(\Omega).\nonumber
a(y,v):=(A\nabla y,\nabla v)+(a_0y,v),\quad \forall y,v\in H_0^1(\Omega).\nonumber
\end{eqnarray}
We denote $\||v\||:=\sqrt{a(v,v)}$ for the global energy norm over $\Omega$ and $\||v\||_D$  the local energy semi-norm when the integral is restricted to  $D\subset\Omega$.

Since $f$ is fixed, we use $y=Su$ instead of $y=S(f+u)$ in the following to emphasize the dependence on $u$. Then we are led to a reduced optimization problem $\min\limits_{u\in U_{ad}}\ \hat J(u):=J(Su,u)$ involving only the control $u$. By standard arguments (\cite{Lions}) we can prove that the above optimization problem admits a unique solution. Moreover, we can derive the first order optimality condition: 
\begin{eqnarray}
(\alpha u+p,v-u)\geq 0\quad \forall v\in U_{ad},\label{OCP_opt}
\end{eqnarray}
where the adjoint state $p\in H_0^1(\Omega)$ satisfies
\begin{equation}\label{OCP_adjoint}
\left\{\begin{array}{llr}
\mathcal{L}^* p=y-y_d \quad&\mbox{in}\
\Omega, \\
 \quad \ p=0  \quad &\mbox{on}\ \partial\Omega.
\end{array}\right.
\end{equation}
It is clear that $p=S^*(y-y_d)$ where $S^*$ is the adjoint operator of $S$ such that
\begin{eqnarray}
a(v,p)=(y-y_d,v)\quad \forall v\in H_0^1(\Omega).\label{OCP_adjoint_weak}
\end{eqnarray}
From (\ref{OCP_opt}) we can derive the pointwise representation of the control $u$ through the adjoint $p$: $u=P_{[a,b]}(p):=\max\{a,\min\{b,-\frac{p}{\alpha}\}\}$.

Now we consider the finite element approximation of above optimal control problems. We borrow some notations from \cite{Demlow}. To begin with, let $\mathcal{T}_0$ be a conforming and quasi-uniform partition of $\bar\Omega$ into disjoint $d$-simplices. Each element in $\mathcal{T}_0$ is assumed to be shape regular in the usual sense (see \cite{Ciarlet}). We denote the set of all conforming descendants $\mathcal{T}$ of $\mathcal{T}_0$ by $\mathbb{T}$, which can be generated through uniform or local refinements by newest vertex bisection algorithm. The simplices in any of those partitions are uniformly shape regular depending only on the shape regularity parameters of $\mathcal{T}_0$ and the dimension $d$, we refer to \cite[Appendix A]{Demlow} for more details on how to generate such kind of partitions. For each $\tilde {\mathcal{T}},\mathcal{T}\in \mathbb{T}$, we write $\mathcal{T}\subset \tilde{\mathcal{T}}$ when $\tilde{\mathcal{T}}$ is a refinement of $\mathcal{T}$.

Let $\mathcal{N}_{\mathcal{T}}$ and $\mathcal{E}_{\mathcal{T}}$ be the sets of vertices and interior edges or faces of $\mathcal{T}$. We also denote $\omega_T$ or $\tilde{\omega}_{T}$ the patches of elements sharing a vertex or a facet with $T$. We denote $h_T=|T|^{1\over d}$ for each $T\in \mathcal{T}\in \mathbb{T}$ with $|T|$ the volume of $T$. Since the Galerkin approximation is not the best approximation in $L^2$-norm, we need to impose some restrictions on the mesh for the convergence analysis of AFEM in $L^2$-norm . Following the ideas of \cite{Demlow} we define the continuous and piecewise linear mesh function $h_{\mathcal{T}}$, such that for any $z\in \mathcal{N}_{\mathcal{T}}$, $h_{\mathcal{T}}$ is defined as the average of the $h_{T'}$ over all $T'\in \mathcal{T}$ where $z\in T'$. Then for some constants $c_{\mathbb{T}}$ and $C_{\mathbb{T}}$ there holds
\begin{eqnarray}
c_{\mathbb{T}}h_T\leq h_{\mathcal{T}}|_{T}\leq C_{\mathbb{T}}h_T,\quad \forall T\in \mathcal{T}, \mathcal{T}\in \mathbb{T}.\label{shape_regular}
\end{eqnarray}
In view of the uniform shape regularity of $\mathbb{T}$ there exists another constant $\hat C_{\mathbb{T}}$ such that
\begin{eqnarray}
\|\nabla h_{\mathcal{T}}\|_{0.\infty,\Omega}\leq \hat C_{\mathbb{T}}\quad \forall \mathcal{T}\in \mathbb{T}.\nonumber
\end{eqnarray}
Throughout the paper we assume that the partition $\mathcal{T}$ is sufficiently graded, i.e., for some sufficiently small but fixed constant $\mu>0$, the mesh function $h_\mathcal{T}$ satisfies 
 \begin{eqnarray}
\|\nabla h_{\mathcal{T}}\|_{0,\infty,\Omega}\leq \mu,\label{mesh_grading}
\end{eqnarray}
and in addition, (\ref{shape_regular}) holds for some constants $c_{\mathbb{T}}$ and $C_{\mathbb{T}}$ that are independent of $\mu$. We refer to Appendix A in \cite{Demlow} on how to generate a class of sufficiently mildly graded partitions $\mathcal{T}\in \mathbb{T}$ for given  parameter $\mu$ such that the mesh function $h_\mathcal{T}$ satisfies (\ref{shape_regular}) and (\ref{mesh_grading}). Given a $\mu>0$ we denote the class of such  partitions by $\mathbb{T}_\mu$.

Associated with $\mathcal{T}\in \mathbb{T}$ we define the continuous and piecewise linear finite element  space $V_{\mathcal{T}}\subset H_0^1(\Omega)$. Let $\Pi_{\mathcal{T}}:C(\bar\Omega)\rightarrow V_{\mathcal{T}}$ be the standard Lagrange interpolation operator. We define a discrete control-to-state mapping as $S_\mathcal{T}:L^2(\Omega)\rightarrow V_{\mathcal{T}}$ such that $y_\mathcal{T}(u)=S_\mathcal{T}(f+u)$ and 
\begin{eqnarray}
a(y_\mathcal{T}(u),v_\mathcal{T})=(f+u,v_\mathcal{T})\quad \forall v_\mathcal{T}\in V_{\mathcal{T}}.\label{OCP_state_h}
\end{eqnarray}
Also we denote $y_\mathcal{T}(u)=S_\mathcal{T}u$ for simplicity. Then we can formulate a reduced discrete optimization problem $\min\limits_{u_\mathcal{T}\in U_{ad}}\ \hat J(u_\mathcal{T}):=J(S_\mathcal{T}u_\mathcal{T},u_\mathcal{T})$ where we use the variational control discretization proposed by Hinze (\cite{Hinze05COAP}). By standard arguments (\cite{Lions}) we can also prove that the above discrete optimization problem admits a unique solution. Moreover, we can derive the following discrete first order optimality condition: 
\begin{eqnarray}
(\alpha u_\mathcal{T}+p_\mathcal{T},v_\mathcal{T}-u_\mathcal{T})\geq 0\quad \forall v_\mathcal{T}\in U_{ad},\label{OCP_opt_h}
\end{eqnarray}
where the discrete adjoint state $p_\mathcal{T}\in V_\mathcal{T}$ satisfies
\begin{eqnarray}
a( v_\mathcal{T},p_\mathcal{T})=(y_\mathcal{T}-y_d,v_\mathcal{T})\quad \forall v_\mathcal{T}\in V_\mathcal{T}\label{OCP_adjoint_h}
\end{eqnarray}
with $y_\mathcal{T}=S_\mathcal{T}u_\mathcal{T}$. It is clear that $p_\mathcal{T}=S_\mathcal{T}^*(y_\mathcal{T}-y_d)$ where $S_\mathcal{T}^*$ is the adjoint of $S_\mathcal{T}$. Similarly, we have $u_\mathcal{T}=P_{[a,b]}(p_\mathcal{T})=\max\{a,\min\{b,-\frac{p_\mathcal{T}}{\alpha}\}\}$, which is not  generally a finite element function.

Now we will give some notations for the following purpose. For each element $T\in \mathcal{T}$ we define the  local error indicators $ \eta_{\mathcal{T},y}(u_\mathcal{T},y_\mathcal{T},T)$ contributed to the state equation and $ \eta_{\mathcal{T},p}(y_\mathcal{T},p_\mathcal{T},T)$ contributed to the adjoint state equation by
\begin{eqnarray}
\eta_{\mathcal{T},y}^2(u_\mathcal{T},y_\mathcal{T},T):=h_T^4\|f+u_\mathcal{T}-\mathcal{L}y_{\mathcal{T}}\|_{0,T}^2+\sum\limits_{E\in\mathcal{E}_\mathcal{T},E\subset\partial T}h_E^{3}\|[A\nabla y_\mathcal{T}]_E\cdot n_E\|_{0,E}^2,\label{local_state}\\
\eta_{\mathcal{T},p}^2(y_\mathcal{T},p_\mathcal{T},T):=h_T^4\|y_\mathcal{T}-y_d-\mathcal{L}^*p_{\mathcal{T}}\|_{0,T}^2+\sum\limits_{E\in\mathcal{E}_\mathcal{T},E\subset\partial T}h_E^{3}\|[A^*\nabla p_\mathcal{T}]_E\cdot n_E\|_{0,E}^2,\label{local_adjoint}
\end{eqnarray}
where $[A\nabla y_\mathcal{T}]_E\cdot n_E$ denotes the jump of $A\nabla y_\mathcal{T}$ across the common side $E$ of elements $T^+$ and $T^-$, $n_E$ denotes the outward normal oriented to $T^-$.  We also define the local error estimator for the optimal control problem
\begin{eqnarray}
\eta_{\mathcal{T}}^2(T):= \eta_{\mathcal{T},y}^2(u_\mathcal{T},y_\mathcal{T},T)+\eta_{\mathcal{T},p}^2(y_\mathcal{T},p_\mathcal{T},T).\label{local_OCP}
\end{eqnarray}
Then on a subset $\omega\subset \Omega$, we define the error estimator $\eta_{\mathcal{T},y}(u_\mathcal{T},y_\mathcal{T},\omega)$ by
\begin{eqnarray}
\eta_{\mathcal{T},y}^2(u_\mathcal{T},y_\mathcal{T},\omega):=\sum\limits_{T\in\mathcal{T},T\subset\omega}\eta_{\mathcal{T},y}^2(u_\mathcal{T},y_\mathcal{T},T).\label{estimator_elliptic}
\end{eqnarray}
Thus, $\eta_{\mathcal{T},y}(u_\mathcal{T},y_\mathcal{T},\mathcal{T})$ constitutes the error estimator for the state equation on $\Omega$ with respect to the partition $\mathcal{T}$. The similar definition applies to the error estimators $\eta_{\mathcal{T},p}(y_\mathcal{T},p_\mathcal{T},\mathcal{T})$ for the adjoint state equation and $\eta_{\mathcal{T}}(\mathcal{T})$ for the optimal control problem.

For $f\in L^2(\Omega)$ we also need to define the data oscillation as (see \cite{Morin,Nochetto})
\begin{eqnarray}
\mbox{osc}(f,T):=\|h_T^2(f-\bar f_T)\|_{0,T},\quad \mbox{osc}(f,\mathcal{T}):=\Big(\sum\limits_{T\in\mathcal{T}}\mbox{osc}^2(f,T)\Big)^{1\over 2},\label{data_osc}
\end{eqnarray}
where $\bar f_T$ denotes the $L^2$-projection of $f$ onto piecewise constant space on $T$. It is easy to see that 
\begin{eqnarray}
\mbox{osc}(f_1+f_2,\mathcal{T})\leq \mbox{osc}(f_1,\mathcal{T})+\mbox{osc}(f_2,\mathcal{T}),\quad \forall f_1,f_2\in L^2(\Omega).\label{osc_linear}
\end{eqnarray}

%For the above defined data oscillation we have the following lemma whose proof can be found in \cite[Lemma 2.4]{Dai}.
%\begin{Lemma}\label{Lm:2.4}
%There exists a constant $C_*$ which depends on $A$, the mesh regularity constant $\gamma^*$ and the coefficient $c$, such that
%\begin{eqnarray}
%{\rm osc}(Lv,\mathcal{T}_h)\leq C_*\|v\|_{a,\Omega},\quad {\rm osc}(L^*v,\mathcal{T}_h)\leq C_*\|v\|_{a,\Omega}\quad\forall v\in V_h.\label{osc_bound}
%\end{eqnarray}
%\end{Lemma}

To derive error estimates in $L^2$-norm we need the following assumption:
\begin{Assumption}\label{Ass:1}
 Assume that $\Omega$ is convex so that for each $f+u\in L^2(\Omega)$ problem (\ref{OCP_state}) admits a unique solution $y=Su\in H^2(\Omega)\cap H_0^1(\Omega)$ and 
\begin{eqnarray}
\|y\|_{2,\Omega}\leq C_{\rm reg}\|f+u\|_{0,\Omega}.\label{Poisson_reg}
\end{eqnarray}
The assumption is also valid for the adjoint equation, i.e., for $S^*$.
\end{Assumption}

%%%%%%%%%%%%%%%%%%%%%%%%%%%%%%%%%%%%%%%%%%%%%%%%%%
With above preparations now we are in the position to derive a posteriori error estimates for both the control, the state and adjoint state variables. 
\begin{Theorem}\label{Thm:1}
Let $(u,y,p)\in U_{ad}\times H_0^1(\Omega)\times H_0^1(\Omega)$ be the solution of optimal control problem (\ref{OCP})-(\ref{OCP_state}) and $(u_\mathcal{T},y_\mathcal{T},p_\mathcal{T})\in U_{ad}\times V_\mathcal{T}\times V_\mathcal{T}$ be the solution of the discrete problem (\ref{OCP_state_h})-(\ref{OCP_adjoint_h}). Then we have the a posteriori upper bound
\begin{eqnarray}
\|u-u_\mathcal{T}\|_{0,\Omega}+\|y-y_\mathcal{T}\|_{0,\Omega}+\|p-p_\mathcal{T}\|_{0,\Omega}\leq C_1\eta_\mathcal{T}(\mathcal{T})\label{OCP_error_upper}
\end{eqnarray}
and the global lower bound
\begin{eqnarray}
\eta_\mathcal{T}(\mathcal{T})\leq C_2(\|u-u_\mathcal{T}\|_{0,\Omega}+\|y-y_\mathcal{T}\|_{0,\Omega}+\|p-p_\mathcal{T}\|_{0,\Omega}+{\rm osc}_\mathcal{T}),\label{OCP_error_lower}
\end{eqnarray}
where $C_1$, $C_2$ only depend on the shape regularity of $\mathcal{T}$ and  the data oscillation ${\rm osc}_\mathcal{T}$ is defined as 
\begin{eqnarray}
{\rm osc}_\mathcal{T}^2:={\rm osc}^2(f+u_\mathcal{T}-\mathcal{L}y_{\mathcal{T}},\mathcal{T})+{\rm osc}^2(y_\mathcal{T}-y_d-\mathcal{L}^*p_{\mathcal{T}},\mathcal{T}).\nonumber
\end{eqnarray}
\end{Theorem}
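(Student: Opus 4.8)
The plan is to reduce the coupled estimate to the two standard single-equation $L^2$-norm a posteriori bounds, combined with the projection formula and the mapping properties of $S$ and $S^*$. The auxiliary objects I would use are the continuous state and adjoint driven by the \emph{discrete} control, namely $y(u_\mathcal{T}):=Su_\mathcal{T}$ and $p(u_\mathcal{T}):=S^*(y(u_\mathcal{T})-y_d)$, together with the continuous adjoint driven by the discrete state, $p(y_\mathcal{T}):=S^*(y_\mathcal{T}-y_d)$. For the upper bound (\ref{OCP_error_upper}) the decisive step is to control the control error without a circular dependence on the other two errors. Testing (\ref{OCP_opt}) with $v=u_\mathcal{T}$ and (\ref{OCP_opt_h}) with $v_\mathcal{T}=u\in U_{ad}$ and adding the two inequalities gives $\alpha\|u-u_\mathcal{T}\|_{0,\Omega}^2\le(p-p_\mathcal{T},u_\mathcal{T}-u)$. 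I would then split $p-p_\mathcal{T}=(p-p(u_\mathcal{T}))+(p(u_\mathcal{T})-p_\mathcal{T})$ and use $p-p(u_\mathcal{T})=S^*S(u-u_\mathcal{T})$ to compute $(p-p(u_\mathcal{T}),u_\mathcal{T}-u)=-\|S(u-u_\mathcal{T})\|_{0,\Omega}^2\le0$. Discarding this nonpositive term yields $\|u-u_\mathcal{T}\|_{0,\Omega}\le\frac{1}{\alpha}\|p(u_\mathcal{T})-p_\mathcal{T}\|_{0,\Omega}$, whose right-hand side no longer involves $\|y-y_\mathcal{T}\|_{0,\Omega}$ or $\|p-p_\mathcal{T}\|_{0,\Omega}$; this is what breaks the cycle.

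It then remains to estimate the single-equation quantities by duality. Since $\|y(u_\mathcal{T})-y_\mathcal{T}\|_{0,\Omega}=\|(S-S_\mathcal{T})u_\mathcal{T}\|_{0,\Omega}$ is the $L^2$-error of the Galerkin approximation of the state equation with data $f+u_\mathcal{T}$, I would solve the dual problem $\mathcal{L}^*z=g$, pair $g$ with the residual, use Galerkin orthogonality, the interpolation estimates $\|z-\Pi_\mathcal{T}z\|_{0,T}\lesssim h_T^2\|z\|_{2,\omega_T}$ and the trace counterpart on faces, and invoke the $H^2$-regularity $\|z\|_{2,\Omega}\lesssim\|g\|_{0,\Omega}$ of Assumption \ref{Ass:1}. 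This produces exactly the $h_T^4$ and $h_E^3$ weights of (\ref{local_state}), hence $\|y(u_\mathcal{T})-y_\mathcal{T}\|_{0,\Omega}\lesssim\eta_{\mathcal{T},y}$; the same argument on the adjoint equation with data $y_\mathcal{T}-y_d$ gives $\|p(y_\mathcal{T})-p_\mathcal{T}\|_{0,\Omega}\lesssim\eta_{\mathcal{T},p}$. Assembling, $\|p(u_\mathcal{T})-p_\mathcal{T}\|_{0,\Omega}\le\|S^*\|\,\|y(u_\mathcal{T})-y_\mathcal{T}\|_{0,\Omega}+\|p(y_\mathcal{T})-p_\mathcal{T}\|_{0,\Omega}\lesssim\eta_\mathcal{T}(\mathcal{T})$, so $\|u-u_\mathcal{T}\|_{0,\Omega}\lesssim\eta_\mathcal{T}(\mathcal{T})$; finally the triangle inequalities $\|y-y_\mathcal{T}\|_{0,\Omega}\le\|S\|\,\|u-u_\mathcal{T}\|_{0,\Omega}+\|y(u_\mathcal{T})-y_\mathcal{T}\|_{0,\Omega}$ and $\|p-p_\mathcal{T}\|_{0,\Omega}\le\|S^*\|\,\|S\|\,\|u-u_\mathcal{T}\|_{0,\Omega}+\|p(u_\mathcal{T})-p_\mathcal{T}\|_{0,\Omega}$ close (\ref{OCP_error_upper}) by the boundedness of $S,S^*:L^2(\Omega)\to L^2(\Omega)$.

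For the lower bound (\ref{OCP_error_lower}) I would use the localized residual technique. By the triangle inequality and $\|y-y(u_\mathcal{T})\|_{0,\Omega}=\|S(u-u_\mathcal{T})\|_{0,\Omega}\lesssim\|u-u_\mathcal{T}\|_{0,\Omega}$ (and the analogue $\|p-p(y_\mathcal{T})\|_{0,\Omega}=\|S^*(y-y_\mathcal{T})\|_{0,\Omega}\lesssim\|y-y_\mathcal{T}\|_{0,\Omega}$), it suffices to prove the local bounds $\eta_{\mathcal{T},y}\lesssim\|y(u_\mathcal{T})-y_\mathcal{T}\|_{0,\Omega}+\mbox{osc}(f+u_\mathcal{T}-\mathcal{L}y_\mathcal{T},\mathcal{T})$ and $\eta_{\mathcal{T},p}\lesssim\|p(y_\mathcal{T})-p_\mathcal{T}\|_{0,\Omega}+\mbox{osc}(y_\mathcal{T}-y_d-\mathcal{L}^*p_\mathcal{T},\mathcal{T})$; the passage back to the genuine errors then only adds control and state error terms already present on the right of (\ref{OCP_error_lower}). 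The interior residual $R_T:=f+u_\mathcal{T}-\mathcal{L}y_\mathcal{T}$ and the face jumps are treated by interior and face bubble functions in the usual Verf\"urth manner, the oscillation terms absorbing the replacement of the residuals by their piecewise polynomial means.

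The hard part will be that the estimator is scaled for the $L^2$- rather than the energy norm: a plain element bubble $b_T$ only delivers the energy-type bound $h_T\|R_T\|_{0,T}\lesssim\ldots$, whereas (\ref{local_state}) demands the stronger $h_T^2\|R_T\|_{0,T}\lesssim\|y(u_\mathcal{T})-y_\mathcal{T}\|_{0,T}+\mbox{osc}$. To gain the extra factor of $h_T$ I would test with a bubble $w_T$ vanishing to second order on $\partial T$ (so that $w_T$ and $A\nabla w_T\cdot n_E$ both vanish there), which allows integrating by parts twice with no boundary contribution and estimating $(R_T,w_T)_T=(\mathcal{L}(y(u_\mathcal{T})-y_\mathcal{T}),w_T)_T=(y(u_\mathcal{T})-y_\mathcal{T},\mathcal{L}^*w_T)_T\le\|y(u_\mathcal{T})-y_\mathcal{T}\|_{0,T}\,\|\mathcal{L}^*w_T\|_{0,T}$, followed by the inverse inequality $\|\mathcal{L}^*w_T\|_{0,T}\lesssim h_T^{-2}\|w_T\|_{0,T}$; the face jumps require an analogous second-order-vanishing face bubble. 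A dimensional count confirms $h_T^2\|R_T\|_{0,T}\simeq\|y(u_\mathcal{T})-y_\mathcal{T}\|_{0,T}$ is the correct scaling, so the only genuine difficulty is engineering the test functions so that all boundary terms disappear and both powers of $h_T$ are recovered cleanly.
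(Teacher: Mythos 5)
Your proposal is correct, and its skeleton coincides with the paper's: add the two variational inequalities (\ref{OCP_opt}) and (\ref{OCP_opt_h}) tested with $u_\mathcal{T}$ and $u$, reduce all three errors to the two auxiliary elliptic Galerkin errors $\|Su_\mathcal{T}-S_\mathcal{T}u_\mathcal{T}\|_{0,\Omega}$ and $\|S^*(y_\mathcal{T}-y_d)-S^*_\mathcal{T}(y_\mathcal{T}-y_d)\|_{0,\Omega}$, then invoke the standard elliptic $L^2$-norm a posteriori bounds, which is where Assumption \ref{Ass:1} enters. The genuine differences are in execution. In the control estimate you insert $p(u_\mathcal{T})=S^*(Su_\mathcal{T}-y_d)$ and discard the sign-definite contribution $(p-p(u_\mathcal{T}),u_\mathcal{T}-u)=-\|S(u-u_\mathcal{T})\|_{0,\Omega}^2\le 0$, which gives the clean intermediate bound $\|u-u_\mathcal{T}\|_{0,\Omega}\le\alpha^{-1}\|p(u_\mathcal{T})-p_\mathcal{T}\|_{0,\Omega}$, the state and adjoint errors being recovered afterwards by triangle inequalities; the paper's decomposition (\ref{u_rep}) instead retains the term $-\|y-y_\mathcal{T}\|_{0,\Omega}^2$ on the left-hand side and closes with Young's inequality, so the state error is controlled simultaneously rather than a posteriori. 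The two routes are equivalent in substance, yours being slightly more economical. For the lower bound, your triangle-inequality reduction to the single-equation efficiency estimates is exactly the paper's; the difference is that the paper cites Verf\"{u}rth for those estimates while you sketch their proof, and your diagnosis is the correct one: a plain element bubble only produces the energy scaling, and the extra power of $h_T$ comes from testing with functions vanishing to second order on $\partial T$, integrating by parts twice, and using the inverse estimate $\|\mathcal{L}^*w_T\|_{0,T}\lesssim h_T^{-2}\|w_T\|_{0,T}$. The one imprecision is your treatment of the jumps: the face test function must not vanish to second order on $E$ itself, since that would annihilate the pairing with $[A\nabla y_\mathcal{T}]_E\cdot n_E$; what is required is second-order vanishing on $\partial\omega_E$ together with a continuous conormal derivative across $E$, so that the double integration by parts over the two elements of $\omega_E$ leaves no term involving $[A^*\nabla w]_E$. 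With that reading your argument is the standard $L^2$-efficiency proof and the proposal is complete.
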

\begin{proof}
Setting $v= u_\mathcal{T}\in U_{ad}$ in (\ref{OCP_opt}) and $v_\mathcal{T} = u\in U_{ad}$ in (\ref{OCP_opt_h}) and noticing that $p=S^*(Su-y_d)$, $p_\mathcal{T}=S^*_\mathcal{T}(S_\mathcal{T}u_\mathcal{T}-y_d)$,  we are then led to
\begin{eqnarray}
(\alpha u+S^*(Su-y_d),u_\mathcal{T}-u)\geq 0, \label{u_to_p1}\\
(\alpha u_\mathcal{T}+S^*_\mathcal{T}(S_\mathcal{T}u_\mathcal{T}-y_d),u-u_\mathcal{T})\geq 0. \label{u_to_p2}
\end{eqnarray}
Adding the above two inequalities, we conclude from (\ref{OCP_state_weak}) and (\ref{OCP_adjoint_weak})  that
\begin{eqnarray}
&&\alpha\|u- u_\mathcal{T}\|^2_{0,\Omega}\leq (S^*_\mathcal{T}(S_\mathcal{T}u_\mathcal{T}-y_d)-S^*(Su-y_d),u- u_\mathcal{T})\nonumber\\
&=& (S^*_\mathcal{T}(S_\mathcal{T} u_\mathcal{T}-y_d)-S^*(S_\mathcal{T}u_\mathcal{T}-y_d),u- u_\mathcal{T})+(S^*(S_\mathcal{T}u_\mathcal{T}-y_d)-S^*(Su-y_d),u-u_\mathcal{T})\nonumber\\
&=&(S^*_\mathcal{T}(S_\mathcal{T}u_\mathcal{T}-y_d)-S^*(S_\mathcal{T}u_\mathcal{T}-y_d),u-u_\mathcal{T})+(S_\mathcal{T}u_\mathcal{T}-Su,Su-Su_\mathcal{T})\nonumber\\
&=&(S^*_\mathcal{T}(S_\mathcal{T} u_\mathcal{T}-y_d)-S^*(S_\mathcal{T}u_\mathcal{T}-y_d),u-u_\mathcal{T})+(S_\mathcal{T}u_\mathcal{T}-Su,Su-S_\mathcal{T}u_\mathcal{T})\nonumber\\
&&+(S_\mathcal{T} u_\mathcal{T}-Su,S_\mathcal{T} u_\mathcal{T}-Su_\mathcal{T}).\label{u_rep}
\end{eqnarray}
It follows from Young's inequality that
\begin{eqnarray}
&&\alpha\|u-u_\mathcal{T}\|^2_{0,\Omega}+\|y-y_\mathcal{T}\|^2_{0,\Omega}\nonumber\\
&\leq&  C\|Su_\mathcal{T}-S_\mathcal{T} u_\mathcal{T}\|^2_{0,\Omega}+C\|S^*(S_\mathcal{T}u_\mathcal{T}-y_d)-S^*_\mathcal{T}(S_\mathcal{T}u_\mathcal{T}-y_d)\|_{0,\Omega}^2,\label{u_error}
\end{eqnarray}
where we used the fact that $y=Su$ and $y_\mathcal{T}=S_\mathcal{T} u_\mathcal{T}$. Moreover, from (\ref{OCP_state_weak}), (\ref{OCP_adjoint_weak}) and the triangle inequality we have
\begin{eqnarray}
\|p- p_\mathcal{T}\|_{0,\Omega}&\leq& \|p-S^*(S_\mathcal{T}u_\mathcal{T}-y_d)\|_{0,\Omega}+\|S^*(S_\mathcal{T}u_\mathcal{T}-y_d)- p_\mathcal{T}\|_{0,\Omega}\nonumber\\
&\leq& C\|Su-S_\mathcal{T}u_\mathcal{T}\|_{0,\Omega}+\|S^*(S_\mathcal{T}u_\mathcal{T}-y_d)-p_\mathcal{T}\|_{0,\Omega}\nonumber\\
&\leq& C\|u-u_\mathcal{T}\|_{0,\Omega}+C\|S u_\mathcal{T}-S_\mathcal{T}u_\mathcal{T}\|_{0,\Omega}+\|S^*(S_\mathcal{T}u_\mathcal{T}-y_d)-p_\mathcal{T}\|_{0,\Omega}.\nonumber
\end{eqnarray}
Combining the above estimates we are led to 
 \begin{eqnarray}
&&\alpha\|u-u_\mathcal{T}\|^2_{0,\Omega}+\|y-y_\mathcal{T}\|^2_{0,\Omega}+\|p- p_\mathcal{T}\|_{0,\Omega}^2\nonumber\\
 &\leq&C\|Su_\mathcal{T}-S_\mathcal{T} u_\mathcal{T}\|^2_{0,\Omega}+C\|S^*(S_\mathcal{T}u_\mathcal{T}-y_d)-S^*_\mathcal{T}(S_\mathcal{T}u_\mathcal{T}-y_d)\|_{0,\Omega}^2.
 \end{eqnarray}
Note that $S_\mathcal{T} u_\mathcal{T}$ and $S^*_\mathcal{T}(S_\mathcal{T}u_\mathcal{T}-y_d)$ are the standard finite element approximations of $Su_\mathcal{T}$ and $S^*(S_\mathcal{T}u_\mathcal{T}-y_d)$ in finite element space $V_\mathcal{T}$, respectively. Under  Assumption \ref{Ass:1} we can derive from standard a posteriori error estimate for elliptic equation under $L^2$-norm that (see \cite{Verfuth} for more details)
\begin{eqnarray}
\|Su_\mathcal{T}-S_\mathcal{T} u_\mathcal{T}\|_{0,\Omega}\leq C\eta_{\mathcal{T},y}(u_\mathcal{T},y_\mathcal{T},\mathcal{T}),\\
\|S^*(S_\mathcal{T}u_\mathcal{T}-y_d)-S^*_\mathcal{T}(S_\mathcal{T}u_\mathcal{T}-y_d)\|_{0,\Omega}\leq C\eta_{\mathcal{T},p}(y_\mathcal{T},p_\mathcal{T},\mathcal{T}).
\end{eqnarray}
Therefore, combining the above results we are able to derive the upper bound with the constant $C_1$ independent of the mesh size $h_\mathcal{T}$. 

Now we prove the lower bound. Note that
\begin{eqnarray}
\|S u_\mathcal{T}-S_\mathcal{T}u_\mathcal{T}\|_{0,\Omega}&\leq&\|Su_\mathcal{T}-Su\|_{0,\Omega}+\|Su-S_\mathcal{T}u_\mathcal{T}\|_{0,\Omega}\nonumber\\
&\leq&C\| u- u_\mathcal{T}\|_{0,\Omega}+\| y- y_\mathcal{T}\|_{0,\Omega}.\label{y_error}
\end{eqnarray}
Similarly, we can derive that
\begin{eqnarray}
&&\|S^*(S_\mathcal{T}u_\mathcal{T}-y_d)-S^*_\mathcal{T}(S_\mathcal{T}u_\mathcal{T}-y_d)\|_{0,\Omega}\nonumber\\
&\leq&\|S^*(S_\mathcal{T}u_\mathcal{T}-y_d)-S^*(Su-y_d)\|_{0,\Omega}+\|S^*(Su-y_d)-S^*_\mathcal{T}(S_\mathcal{T}u_\mathcal{T}-y_d)\|_{0,\Omega}\nonumber\\
%&\leq&C\|S_\mathcal{T}u_\mathcal{T}-Su\|_{0,\Omega}+\|p- p_\mathcal{T}\|_{0,\Omega}\nonumber\\
&\leq&C\|y-y_\mathcal{T}\|_{0,\Omega}+\| p- p_\mathcal{T}\|_{0,\Omega}.\label{p_error}
\end{eqnarray}
Moreover, from standard lower bound error estimate for elliptic equation (see \cite{Verfuth} for more details)
 we have the following global lower bound
 \begin{eqnarray}
\eta_{\mathcal{T},y}(u_\mathcal{T},y_\mathcal{T},\mathcal{T})&\leq& C(\|Su_\mathcal{T}-S_\mathcal{T} u_\mathcal{T}\|_{0,\Omega}+\mbox{osc}(f+u_\mathcal{T}-\mathcal{L}y_{\mathcal{T}},\mathcal{T})),\\
\eta_{\mathcal{T},p}(y_\mathcal{T},p_\mathcal{T},\mathcal{T})&\leq& C(\|S^*(S_\mathcal{T}u_\mathcal{T}-y_d)-S^*_\mathcal{T}(S_\mathcal{T}u_\mathcal{T}-y_d)\|_{0,\Omega}\nonumber\\
&&+\mbox{osc}(y_\mathcal{T}-y_d-\mathcal{L}^*p_{\mathcal{T}},\mathcal{T})).
\end{eqnarray}
 Thus, we can conclude from the above estimates the lower bound with the constant $C_2$ independent of the mesh size $h_\mathcal{T}$. This completes the proof.
\end{proof}
%%%%%%%%%%%%%%%%%%%%%%%%%%%%%%%%%%%%%%%%%%%%%%%%
\begin{Remark}
We remark that it is also possible to derive reliable a posteriori error estimates if $\Omega$ is a polygon but  not necessarily convex, by using the regularity of the dual problem in weighted Sobolev spaces when we derive an error estimator in $L^2$-norm for second order elliptic equation, see, e.g., \cite{Wihler}. However, for our convergence analysis we also need the efficiency of the error estimator which is still missing in the literature for unconvex domain. Therefore, we restrict ourselves to the convex case in this paper.
\end{Remark}

In the following we will present the adaptive algorithm for solving optimal control problems. Note that there are two error estimators $\eta_{\mathcal{T},y}(u_\mathcal{T},y_\mathcal{T},T)$ and $\eta_{\mathcal{T},p}(y_\mathcal{T},p_\mathcal{T},T)$ contributed to the state approximation and adjoint state approximation, respectively. We use the sum of the two estimators as our indicators for the marking strategy. The marking algorithm based on D\"{o}rfler's strategy for optimal control problems can be described as follows
\begin{Algorithm}\label{Alg:3.0}D\"{o}rfler's marking strategy for OCPs
\begin{enumerate}
\item Given a parameter $0<\theta<1$;

\item Construct a minimal subset ${\mathcal{M}}\subset \mathcal{T}$ such that
\begin{equation*}
\sum\limits_{T\in{\mathcal{M}}}\eta_{\mathcal{T}}^2(T)\geq \theta^2\eta_{\mathcal{T}}^2(\mathcal{T}).
\end{equation*}

\item Mark all the elements in ${\mathcal{M}}$.

\end{enumerate}
\end{Algorithm}

In the following we will frequently use the abbreviations $V_k$ for $V_{\mathcal{T}_{k}}$, $h_k$ for $h_{\mathcal{T}_{k}}$ and $v_{k}$ for $v_{\mathcal{T}_k}$, and the similar abbreviations for other variables and notations. Now we can present the adaptive finite element algorithm for the optimal
control problem  as follows. 
\begin{Algorithm}\label{Alg:3.1}
Adaptive finite element algorithm for OCPs:
\begin{enumerate}
\item Given an initial mesh $\mathcal{T}_{0}$ with mesh size $h_0$ and the associated finite element space $V_{0}$.

\item Set $k=0$ and solve the optimal control problem (\ref{OCP_state_h})-(\ref{OCP_adjoint_h}) to obtain 
$(u_{k},y_{k},p_{k})\in U_{ad}\times V_{k}\times V_{k}$.

\item Compute the local error indicator $\eta_{k}(T)$.

\item Construct ${ \mathcal{M}}_{k}\subset\mathcal{T}_{k}$ by the marking Algorithm \ref{Alg:3.0}.

\item Refine ${ \mathcal{M}}_{k}$ to get a new conforming mesh $\mathcal{T}_{{k+1}}$ by procedure REFINE using bisection algorithm.

\item Construct the finite element space $V_{{k+1}}$ and solve the optimal control problem (\ref{OCP_state_h})-(\ref{OCP_adjoint_h}) to obtain 
$(u_{{k+1}},y_{{k+1}},p_{{k+1}})\in U_{ad}\times V_{{k+1}}\times V_{{k+1}}$.

\item Set $k=k+1$ and go to Step (3).

\end{enumerate}
\end{Algorithm}

In step (5) of Algorithm \ref{Alg:3.1} we assume that each marked element in $\mathcal{M}_k$ is bisected $r\geq 1$ times to generate a new mesh $\mathcal{T}_{k+1}$, and additional elements are refined in the process to ensure that $\mathcal{T}_{k+1}$ is conforming. We remark that to ensure the mesh grading property (\ref{mesh_grading}) we have to additionally refine elements other than that of ${ \mathcal{M}}_{k}$. Demlow and Stevenson \cite{Demlow} showed that this can be done by inflating the number of simplices by not more than some fixed multiple which depends on the mesh grading parameter $\mu$ and can be achieved by the standard newest vertex bisection algorithm with necessary modifications, and the modification does not compromise the quasi-optimality of the resulting algorithm, we refer to Appendix A in \cite{Demlow} for more details.

%%%%%%%%%%%%%%%%%%%%%%%%%%%%%%%%%%%%%%%%%%%%%%%%
\section{Convergence of AFEM for the optimal control problem in $L^2$-norm}
\setcounter{equation}{0}

In this section we intend to prove the contraction property of the $L^2$-norm errors of the control, the state and adjoint state with $L^2$-norm based AFEM. The proof relies on establishing certain equivalence property between the $L^2$-norm error and the weighted energy norm error for the state and adjoint state, and uses the convergence result of AFEM under energy norm.

\subsection{Connections between the weighted energy norm errors and $L^2$-norm errors}
In this subsection we show certain relationships between the energy norm and $L^2$-norm errors for the finite element approximations of optimal control problems by generalizing the resluts in \cite{Demlow}.

 At first, we show that the $L^2$-norm error of finite element approximation of elliptic equation can be bounded by the weighted energy norm as long as the mesh grading is sufficiently mild. 
\begin{Lemma}\label{Lemma:1}
For sufficiently small $\mu$, let $\mathcal{T}\in \mathbb{T}_\mu$. Under Assumption \ref{Ass:1} we have that for any $f,g\in L^2(\Omega)$, 
\begin{eqnarray}
\|Sf-S_\mathcal{T}f\|_{0,\Omega}\lesssim \||h_\mathcal{T}(Sf-S_\mathcal{T}f)\||,\label{3.1}\\
\|S^*g-S_\mathcal{T}^*g\|_{0,\Omega}\lesssim \||h_\mathcal{T}(S^*g-S_\mathcal{T}^*g)\||.
\end{eqnarray}
\end{Lemma}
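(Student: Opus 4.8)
The plan is to run a weighted Aubin--Nitsche duality argument in which the weight is precisely the mesh function $h_\mathcal{T}$, and then to absorb the error term produced by $\nabla h_\mathcal{T}$ using the mesh grading hypothesis (\ref{mesh_grading}). I only carry out the argument for $S$; the bound for $S^*$ follows verbatim after replacing $\mathcal{L}$ by $\mathcal{L}^*$ and invoking the part of Assumption \ref{Ass:1} for $S^*$ (indeed, since $A$ is symmetric, $a(\cdot,\cdot)$ is symmetric and the two estimates coincide).

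First I would set $e:=Sf-S_\mathcal{T}f\in H_0^1(\Omega)$ and record the Galerkin orthogonality $a(e,v_\mathcal{T})=0$ for all $v_\mathcal{T}\in V_\mathcal{T}$, which is immediate from (\ref{OCP_state_weak}) and (\ref{OCP_state_h}). I introduce the dual solution $w:=S^*e\in H^2(\Omega)\cap H_0^1(\Omega)$, characterized by $a(v,w)=(e,v)$ for all $v\in H_0^1(\Omega)$, for which Assumption \ref{Ass:1} yields $\|w\|_{2,\Omega}\lesssim\|e\|_{0,\Omega}$. Testing with $v=e$ and subtracting the Lagrange interpolant $\Pi_\mathcal{T}w\in V_\mathcal{T}$ via orthogonality gives
\begin{eqnarray}
\|e\|_{0,\Omega}^2=a(e,w)=a(e,w-\Pi_\mathcal{T}w)=(A\nabla e,\nabla(w-\Pi_\mathcal{T}w))+(a_0e,w-\Pi_\mathcal{T}w).\nonumber
\end{eqnarray}

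The second step is to insert the weight $h_\mathcal{T}$ and estimate by a weighted Cauchy--Schwarz inequality, pairing each factor $w-\Pi_\mathcal{T}w$ with $h_\mathcal{T}^{-1}$ and each factor of $e$ with $h_\mathcal{T}$. Using (\ref{shape_regular}) to replace $h_\mathcal{T}|_T$ by $h_T$ elementwise, together with the standard local interpolation estimates $\|\nabla(w-\Pi_\mathcal{T}w)\|_{0,T}\lesssim h_T\|D^2w\|_{0,\omega_T}$ and $\|w-\Pi_\mathcal{T}w\|_{0,T}\lesssim h_T^2\|D^2w\|_{0,\omega_T}$, I obtain $\|h_\mathcal{T}^{-1}\nabla(w-\Pi_\mathcal{T}w)\|_{0,\Omega}+\|h_\mathcal{T}^{-1}(w-\Pi_\mathcal{T}w)\|_{0,\Omega}\lesssim\|w\|_{2,\Omega}\lesssim\|e\|_{0,\Omega}$. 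Hence, after dividing by $\|e\|_{0,\Omega}$,
\begin{eqnarray}
\|e\|_{0,\Omega}\lesssim\|h_\mathcal{T}\nabla e\|_{0,\Omega}+\|h_\mathcal{T}e\|_{0,\Omega}.\nonumber
\end{eqnarray}

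The final and key step is to convert the right-hand side into the weighted energy norm $\||h_\mathcal{T}e\||$; this is where the mesh grading enters and is the main obstacle. Since $h_\mathcal{T}e\in H_0^1(\Omega)$, coercivity of $A$ and Poincar\'e's inequality give $\|h_\mathcal{T}e\|_{0,\Omega}\lesssim\|\nabla(h_\mathcal{T}e)\|_{0,\Omega}\lesssim\||h_\mathcal{T}e\||$. For the gradient term I use the product rule $\nabla(h_\mathcal{T}e)=h_\mathcal{T}\nabla e+e\nabla h_\mathcal{T}$, so that
\begin{eqnarray}
\|h_\mathcal{T}\nabla e\|_{0,\Omega}\leq\|\nabla(h_\mathcal{T}e)\|_{0,\Omega}+\|\nabla h_\mathcal{T}\|_{0,\infty,\Omega}\|e\|_{0,\Omega}\lesssim\||h_\mathcal{T}e\||+\mu\|e\|_{0,\Omega},\nonumber
\end{eqnarray}
where (\ref{mesh_grading}) bounds $\|\nabla h_\mathcal{T}\|_{0,\infty,\Omega}\leq\mu$. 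Combining the last three displays gives $\|e\|_{0,\Omega}\lesssim\||h_\mathcal{T}e\||+\mu\|e\|_{0,\Omega}$, and choosing $\mu$ so small that the constant in front of $\mu\|e\|_{0,\Omega}$ is at most $\tfrac12$ lets me absorb that term on the left, yielding (\ref{3.1}). The delicate point throughout is that $h_\mathcal{T}$ is only mildly graded rather than constant: it is precisely the smallness of $\|\nabla h_\mathcal{T}\|_{0,\infty,\Omega}$ that makes the commutator contribution $e\nabla h_\mathcal{T}$ absorbable, while (\ref{shape_regular}), with constants independent of $\mu$, keeps the interpolation constants uniformly controlled across the class $\mathbb{T}_\mu$.
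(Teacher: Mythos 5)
Your proof is correct and takes essentially the same route as the paper's: the identical weighted Aubin--Nitsche duality argument, with the dual solution $w=S^*e$ (the paper's $\phi$), Galerkin orthogonality against the Lagrange interpolant, the weighted Cauchy--Schwarz pairing of $h_\mathcal{T}$ with $e$ and $h_\mathcal{T}^{-1}$ with $w-\Pi_\mathcal{T}w$, and absorption of the commutator term $\|e\nabla h_\mathcal{T}\|_{0,\Omega}\leq\mu\|e\|_{0,\Omega}$ for $\mu$ sufficiently small. The only difference is presentational: you separate out the conversion $\|h_\mathcal{T}\nabla e\|_{0,\Omega}+\|h_\mathcal{T}e\|_{0,\Omega}\lesssim\||h_\mathcal{T}e\||+\mu\|e\|_{0,\Omega}$ (making the Poincar\'e step explicit), whereas the paper folds it into a single chain of inequalities.
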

\begin{proof}
We only prove (\ref{3.1}) by using the duality argument. In \cite[Proposition 3]{Demlow} the authors proved the result for the Laplace equation, here we extend the proof to a more general second order elliptic equation.  Let $\phi\in H^2(\Omega)\cap H_0^1(\Omega)$ be the unique solution of the auxiliary problem
\begin{eqnarray}
a(v,\phi)=(Sf-S_\mathcal{T}f,v)\quad\forall v\in H_0^1(\Omega).\nonumber
\end{eqnarray}
Then we have $\|\phi\|_{2,\Omega}\leq C\|Sf-S_\mathcal{T}f\|_{0,\Omega}$. Using Galerkin orthogonality and the interpolation error estimates we have
\begin{eqnarray}
\|Sf-S_\mathcal{T}f\|_{0,\Omega}^2&=&a(Sf-S_\mathcal{T}f,\phi)=a(Sf-S_\mathcal{T}f,\phi-\Pi_\mathcal{T}\phi)\nonumber\\
&=&(h_\mathcal{T}A\nabla(Sf-S_\mathcal{T}f),h_\mathcal{T}^{-1}\nabla(\phi-\Pi_\mathcal{T}\phi))+(a_0h_\mathcal{T}(Sf-S_\mathcal{T}f),h_\mathcal{T}^{-1}(\phi-\Pi_\mathcal{T}\phi))\nonumber\\
&\lesssim&(\|\nabla (h_\mathcal{T}(Sf-S_\mathcal{T}f))\|_{0,\Omega}+\|\nabla h_\mathcal{T}(Sf-S_\mathcal{T}f)\|_{0,\Omega})\|h_\mathcal{T}^{-1}\nabla(\phi-\Pi_\mathcal{T}\phi)\|_{0,\Omega}\nonumber\\
&&+\|h_\mathcal{T}(Sf-S_\mathcal{T}f)\|_{0,\Omega}\|h_\mathcal{T}^{-1}(\phi-\Pi_\mathcal{T}\phi)\|_{0,\Omega}\nonumber\\
&\lesssim&\||h_\mathcal{T}(Sf-S_\mathcal{T}f)\||\cdot\|Sf-S_\mathcal{T}f\|_{0,\Omega}+\mu\|Sf-S_\mathcal{T}f\|_{0,\Omega}^2,
\end{eqnarray}
where we used the properties (\ref{shape_regular}) and (\ref{mesh_grading}). Taking $\mu$ small enough we complete the proof.
\end{proof}
We remark that similar to the proof of Lemma \ref{Lemma:1} we can extend the results of \cite{Demlow} to a general second order elliptic equation with sufficiently smooth coefficients. Therefore, we will use some results of \cite{Demlow} without proof in the following analysis. 

With above result we can establish the connection between the $L^2$-norm errors and weighted energy norm errors for the solutions of optimal control problems.
\begin{Lemma}\label{Lemma:11}
Let $(u,y,p)\in U_{ad}\times H_0^1(\Omega)\times H_0^1(\Omega)$ be the solution of optimal control problem (\ref{OCP})-(\ref{OCP_state}) and $(u_\mathcal{T},y_\mathcal{T},p_\mathcal{T})\in U_{ad}\times V_\mathcal{T}\times V_\mathcal{T}$ be the solution of the discrete problem (\ref{OCP_state_h})-(\ref{OCP_adjoint_h}). For sufficiently small $\mu$, let $\mathcal{T}\in \mathbb{T}_\mu$. Under Assumption \ref{Ass:1} we have that
\begin{eqnarray}
\|u-u_\mathcal{T}\|_{0,\Omega}+\|y-y_\mathcal{T}\|_{0,\Omega}+\|p-p_\mathcal{T}\|_{0,\Omega}
\lesssim \||h_\mathcal{T}(y-y_\mathcal{T})\||+\||h_\mathcal{T}(p-p_\mathcal{T})\||
\end{eqnarray}
provided that $h_0\ll 1$.
\end{Lemma}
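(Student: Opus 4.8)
The plan is to start from the estimate already established inside the proof of Theorem \ref{Thm:1}, namely (recalling $y_\mathcal{T}=S_\mathcal{T}u_\mathcal{T}$)
\begin{eqnarray*}
\alpha\|u-u_\mathcal{T}\|^2_{0,\Omega}+\|y-y_\mathcal{T}\|^2_{0,\Omega}+\|p-p_\mathcal{T}\|^2_{0,\Omega}
\lesssim \|Su_\mathcal{T}-S_\mathcal{T}u_\mathcal{T}\|^2_{0,\Omega}+\|S^*(y_\mathcal{T}-y_d)-S_\mathcal{T}^*(y_\mathcal{T}-y_d)\|^2_{0,\Omega},
\end{eqnarray*}
which reduces the left-hand side to the two consistency errors of the state and adjoint discretizations with the data $u_\mathcal{T}$ and $y_\mathcal{T}-y_d$ frozen. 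Taking square roots (a sum of squares being comparable to the square of the sum), it suffices to bound each consistency error by the two target weighted norms plus an absorbable remainder.

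Next I would apply Lemma \ref{Lemma:1} with $f=u_\mathcal{T}$ and $g=y_\mathcal{T}-y_d$ to pass from these $L^2$ consistency errors to their weighted energy norms, $\|Su_\mathcal{T}-S_\mathcal{T}u_\mathcal{T}\|_{0,\Omega}\lesssim\||h_\mathcal{T}(Su_\mathcal{T}-S_\mathcal{T}u_\mathcal{T})\||$ and likewise for the adjoint term. I then steer these toward the desired quantities by the splittings $Su_\mathcal{T}-S_\mathcal{T}u_\mathcal{T}=(Su_\mathcal{T}-Su)+(y-y_\mathcal{T})$ and, using $p=S^*(y-y_d)$, $S^*(y_\mathcal{T}-y_d)-S_\mathcal{T}^*(y_\mathcal{T}-y_d)=S^*(y_\mathcal{T}-y)+(p-p_\mathcal{T})$, together with the triangle inequality for $\||h_\mathcal{T}\cdot\||$. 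This isolates the target terms $\||h_\mathcal{T}(y-y_\mathcal{T})\||$ and $\||h_\mathcal{T}(p-p_\mathcal{T})\||$, at the cost of two weighted norms of \emph{exact-solution} differences, $\||h_\mathcal{T}(Su_\mathcal{T}-Su)\||$ and $\||h_\mathcal{T}(S^*(y_\mathcal{T}-y))\||$.

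The main obstacle — and exactly where the hypotheses of small $\mu$ and $h_0\ll1$ are used — is to show that these two exact-solution terms are controlled by a \emph{small} multiple of the $L^2$ errors. For $w=Su_\mathcal{T}-Su$ I would expand $\nabla(h_\mathcal{T}w)=h_\mathcal{T}\nabla w+w\nabla h_\mathcal{T}$ and estimate $\||h_\mathcal{T}w\||\lesssim\|h_\mathcal{T}\|_{0,\infty,\Omega}\|w\|_{1,\Omega}+\|\nabla h_\mathcal{T}\|_{0,\infty,\Omega}\|w\|_{0,\Omega}$; invoking $\|h_\mathcal{T}\|_{0,\infty,\Omega}\lesssim h_0$ (element sizes only decrease under refinement of the quasi-uniform $\mathcal{T}_0$), the grading bound (\ref{mesh_grading}), and the $H^2$-regularity of Assumption \ref{Ass:1} (so that $\|w\|_{2,\Omega}\lesssim\|u-u_\mathcal{T}\|_{0,\Omega}$) yields $\||h_\mathcal{T}(Su_\mathcal{T}-Su)\||\lesssim(h_0+\mu)\|u-u_\mathcal{T}\|_{0,\Omega}$, and analogously $\||h_\mathcal{T}(S^*(y_\mathcal{T}-y))\||\lesssim(h_0+\mu)\|y-y_\mathcal{T}\|_{0,\Omega}$. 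Substituting back leads to
\begin{eqnarray*}
\|u-u_\mathcal{T}\|_{0,\Omega}+\|y-y_\mathcal{T}\|_{0,\Omega}+\|p-p_\mathcal{T}\|_{0,\Omega}
\lesssim (h_0+\mu)\big(\|u-u_\mathcal{T}\|_{0,\Omega}+\|y-y_\mathcal{T}\|_{0,\Omega}\big)
+\||h_\mathcal{T}(y-y_\mathcal{T})\||+\||h_\mathcal{T}(p-p_\mathcal{T})\||.
\end{eqnarray*}
Choosing $\mu$ sufficiently small and $h_0\ll1$ so that the prefactor $h_0+\mu$ falls below the reciprocal of the hidden constant, the first term on the right is absorbed into the left, which finishes the proof.
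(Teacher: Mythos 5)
Your proposal is correct and follows essentially the same route as the paper's own proof: the same reduction to the consistency errors $\|Su_\mathcal{T}-y_\mathcal{T}\|_{0,\Omega}$ and $\|S^*(y_\mathcal{T}-y_d)-p_\mathcal{T}\|_{0,\Omega}$ via Theorem \ref{Thm:1}, the same application of Lemma \ref{Lemma:1} and triangle-inequality splitting, and the same $(h_0+\mu)$-bound on the exact-solution remainders followed by absorption. The only cosmetic differences are that you invoke $H^2$-regularity where the paper needs only $H^1$-stability of $S$ and $S^*$, and you absorb $(h_0+\mu)\|y-y_\mathcal{T}\|_{0,\Omega}$ directly rather than first re-expanding it as the paper does; both variants are valid.
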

\begin{proof}
From the proof of Theorem \ref{Thm:1} we can conclude that
\begin{eqnarray}
&&\|u-u_\mathcal{T}\|_{0,\Omega}+\|y-y_\mathcal{T}\|_{0,\Omega}+\|p-p_\mathcal{T}\|_{0,\Omega}\nonumber\\
&\lesssim& \|Su_\mathcal{T}-y_\mathcal{T}\|_{0,\Omega}+\|S^*(y_\mathcal{T}-y_d)-p_\mathcal{T}\|_{0,\Omega}.\label{OCP_poste}
\end{eqnarray}
Recall that $y_\mathcal{T}$ and $p_\mathcal{T}$ are the standard finite element approximations of $Su_\mathcal{T}$ and $S^*(y_\mathcal{T}-y_d)$ in $V_\mathcal{T}$, respectively. Then it follows from Lemma \ref{Lemma:1} and the triangle inequality that
\begin{eqnarray}
&&\|u-u_\mathcal{T}\|_{0,\Omega}+\|y-y_\mathcal{T}\|_{0,\Omega}+\|p-p_\mathcal{T}\|_{0,\Omega}\nonumber\\
&\lesssim&\||h_\mathcal{T}(Su_\mathcal{T}-y_\mathcal{T})\||+\||h_\mathcal{T}(S^*(y_\mathcal{T}-y_d)-p_\mathcal{T})\||\nonumber\\
&\lesssim&\||h_\mathcal{T}(y-y_\mathcal{T})\||+\||h_\mathcal{T}(p-p_\mathcal{T})\||\nonumber\\
&&+\||h_\mathcal{T}(y-Su_\mathcal{T})\||+\||h_\mathcal{T}(p-S^*(y_\mathcal{T}-y_d))\||.
\end{eqnarray}
After elementary calculation we have
\begin{eqnarray}
\||h_\mathcal{T}(y-Su_\mathcal{T})\||^2&=&(A\nabla (h_\mathcal{T}(y-Su_\mathcal{T})),\nabla (h_\mathcal{T}(y-Su_\mathcal{T})))+(a_0h_\mathcal{T}(y-Su_\mathcal{T}),h_\mathcal{T}(y-Su_\mathcal{T}))\nonumber\\
&=&(A\nabla (y-Su_\mathcal{T}),\nabla (h_\mathcal{T}^2(y-Su_\mathcal{T})))+(A\nabla h_\mathcal{T}(y-Su_\mathcal{T}),\nabla h_\mathcal{T}(y-Su_\mathcal{T}))\nonumber\\
&&+(a_0h_\mathcal{T}(y-Su_\mathcal{T}),h_\mathcal{T}(y-Su_\mathcal{T}))\nonumber\\
&\lesssim&\| h_\mathcal{T}\|_{0,\infty,\Omega}^2\||y-Su_\mathcal{T}\||^2+ \|\nabla h_\mathcal{T}\|_{0,\infty,\Omega}^2\|y-Su_\mathcal{T}\|_{0,\Omega}^2\nonumber\\
&&+\| h_\mathcal{T}\|_{0,\infty,\Omega} \|\nabla h_\mathcal{T}\|_{0,\infty,\Omega}\||y-Su_\mathcal{T}\||\cdot\|y-Su_\mathcal{T}\|_{0,\Omega}.
\end{eqnarray}
Note that $\| h_\mathcal{T}\|_{0,\infty,\Omega}\leq h_0$ and $\|\nabla h_\mathcal{T}\|_{0,\infty,\Omega}\leq \mu$. This combining with the stability of elliptic equation gives 
\begin{eqnarray}
\||h_\mathcal{T}(y-Su_\mathcal{T})\||\lesssim (h_0+\mu)\|u-u_\mathcal{T}\|_{0,\Omega}.
\end{eqnarray}
Similarly, we can prove that
\begin{eqnarray}
\||h_\mathcal{T}(p-S^*(y_\mathcal{T}-y_d))\||&\lesssim& (h_0+\mu)\|y-y_\mathcal{T}\|_{0,\Omega}\nonumber\\
&\lesssim&(h_0+\mu)(\|u-u_\mathcal{T}\|_{0,\Omega}+\||h_\mathcal{T}(y-y_\mathcal{T})\||).
\end{eqnarray}
Combining the above results we complete the proof if $h_0\ll 1$ and $\mu$ is sufficiently small.
\end{proof}

Now we can derive a posteriori error estimate for the optimal control problem under weighted energy norm.
\begin{Lemma}\label{Lemma:12}
 Let Assumption \ref{Ass:1} be valid. For any $\mathcal{T}\in \mathbb{T}$, there exists a constant $C_3$ independent of the mesh size of $\mathcal{T}$ such that
\begin{eqnarray}
\||h_\mathcal{T}(y-y_\mathcal{T})\||^2+\||h_\mathcal{T}(p-p_\mathcal{T})\||^2\leq C_3\eta^2_{\mathcal{T}}(\mathcal{T}).
\end{eqnarray}
%provided that $h_0\ll 1$.
\end{Lemma}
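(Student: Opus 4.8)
The plan is to reduce both weighted energy errors to the weighted energy-norm a posteriori reliability bound for the two auxiliary boundary value problems solved by $y_\mathcal{T}$ and $p_\mathcal{T}$, and then to dispose of the remaining coupling terms with Theorem \ref{Thm:1}. First I would split the errors by inserting the exact solutions generated by the discrete data, using $y=Su$, $y_\mathcal{T}=S_\mathcal{T}u_\mathcal{T}$, $p=S^*(y-y_d)$ and $p_\mathcal{T}=S^*_\mathcal{T}(y_\mathcal{T}-y_d)$:
\begin{gather*}
y-y_\mathcal{T} = S(u-u_\mathcal{T}) + (Su_\mathcal{T}-y_\mathcal{T}),\\
p-p_\mathcal{T} = S^*(y-y_\mathcal{T}) + (S^*(y_\mathcal{T}-y_d)-p_\mathcal{T}).
\end{gather*}
Applying $\||h_\mathcal{T}\cdot\||$ and the triangle inequality produces, for each error, a genuine discretization term ($Su_\mathcal{T}-y_\mathcal{T}$, respectively $S^*(y_\mathcal{T}-y_d)-p_\mathcal{T}$) plus a coupling term ($S(u-u_\mathcal{T})$, respectively $S^*(y-y_\mathcal{T})$).

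For the discretization terms I would invoke the weighted energy-norm a posteriori upper bound of \cite{Demlow}, extended to the variable-coefficient operator $\mathcal{L}$ as announced in the remark following Lemma \ref{Lemma:1}. Since $y_\mathcal{T}$ is the Galerkin approximation of $Su_\mathcal{T}$ with data $f+u_\mathcal{T}$ and $p_\mathcal{T}$ is that of $S^*(y_\mathcal{T}-y_d)$ with data $y_\mathcal{T}-y_d$, the associated weighted residual estimators are exactly $\eta_{\mathcal{T},y}(u_\mathcal{T},y_\mathcal{T},\mathcal{T})$ and $\eta_{\mathcal{T},p}(y_\mathcal{T},p_\mathcal{T},\mathcal{T})$ from (\ref{local_state})--(\ref{local_adjoint}); note that the $L^2$-type weights $h_T^4$ and $h_E^3$ appearing there are precisely the weights produced by the weighted energy-norm reliability estimate (one extra power of $h$ squared relative to the standard energy estimator). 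This gives $\||h_\mathcal{T}(Su_\mathcal{T}-y_\mathcal{T})\||\lesssim\eta_{\mathcal{T},y}(u_\mathcal{T},y_\mathcal{T},\mathcal{T})$ and the analogous adjoint bound, and hence control of both discretization terms by $\eta_\mathcal{T}(\mathcal{T})$ via (\ref{local_OCP}).

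For the coupling terms I would repeat verbatim the elementary expansion used in the proof of Lemma \ref{Lemma:11}: writing $\||h_\mathcal{T}w\||^2$ through $a(\cdot,\cdot)$, distributing the gradient onto $h_\mathcal{T}$ and $w$, and bounding $\|h_\mathcal{T}\|_{0,\infty,\Omega}\leq h_0$ and $\|\nabla h_\mathcal{T}\|_{0,\infty,\Omega}\leq\hat C_\mathbb{T}$, together with the $L^2$-stability $\||S(u-u_\mathcal{T})\||\lesssim\|u-u_\mathcal{T}\|_{0,\Omega}$ and $\||S^*(y-y_\mathcal{T})\||\lesssim\|y-y_\mathcal{T}\|_{0,\Omega}$ of the elliptic solution operators. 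This yields $\||h_\mathcal{T}S(u-u_\mathcal{T})\||\lesssim\|u-u_\mathcal{T}\|_{0,\Omega}$ and $\||h_\mathcal{T}S^*(y-y_\mathcal{T})\||\lesssim\|y-y_\mathcal{T}\|_{0,\Omega}$, with constants depending only on $h_0$ and the shape regularity. Crucially, here no \emph{smallness} of $h_0$ or $\mu$ is required, only boundedness, so the argument applies to every $\mathcal{T}\in\mathbb{T}$ rather than only to $\mathbb{T}_\mu$.

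Finally I would collect the four estimates and close the loop with the $L^2$ reliability of Theorem \ref{Thm:1}, which bounds $\|u-u_\mathcal{T}\|_{0,\Omega}+\|y-y_\mathcal{T}\|_{0,\Omega}$ by $C_1\eta_\mathcal{T}(\mathcal{T})$; combined with $\eta_{\mathcal{T},y}(u_\mathcal{T},y_\mathcal{T},\mathcal{T})+\eta_{\mathcal{T},p}(y_\mathcal{T},p_\mathcal{T},\mathcal{T})\lesssim\eta_\mathcal{T}(\mathcal{T})$, this gives $\||h_\mathcal{T}(y-y_\mathcal{T})\||+\||h_\mathcal{T}(p-p_\mathcal{T})\||\lesssim\eta_\mathcal{T}(\mathcal{T})$, and squaring produces the claim with $C_3$ independent of the mesh size. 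The main obstacle is the first ingredient, namely verifying that the weighted energy-norm reliability estimate of \cite{Demlow} genuinely transfers to the operator $\mathcal{L}$ with $W^{1,\infty}$ coefficients and matches the weights built into $\eta_{\mathcal{T},y}$ and $\eta_{\mathcal{T},p}$; once this is granted, the coupling terms are only mild perturbations handled exactly as in Lemma \ref{Lemma:11}, and the assembly is routine.
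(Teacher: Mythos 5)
Your proof is correct and follows essentially the same route as the paper's: the identical splitting of $y-y_\mathcal{T}$ and $p-p_\mathcal{T}$ into discretization parts ($Su_\mathcal{T}-y_\mathcal{T}$ and $S^*(y_\mathcal{T}-y_d)-p_\mathcal{T}$, controlled by the weighted a posteriori reliability bound of \cite{Demlow} applied to the variable-coefficient operator) and coupling parts (controlled by the elementary expansion of $\||h_\mathcal{T}\,\cdot\,\||$ using only boundedness of $\|h_\mathcal{T}\|_{0,\infty,\Omega}$ and $\|\nabla h_\mathcal{T}\|_{0,\infty,\Omega}$ plus elliptic stability), closed by the $L^2$ reliability of Theorem \ref{Thm:1}. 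The one small divergence is to your credit: the paper's own proof bounds $\|y-y_\mathcal{T}\|_{0,\Omega}$ via a detour through Lemma \ref{Lemma:1}, which formally requires $\mathcal{T}\in\mathbb{T}_\mu$ with $\mu$ small, whereas your direct appeal to Theorem \ref{Thm:1} needs no mesh-grading hypothesis and thus is consistent with the lemma's claim for arbitrary $\mathcal{T}\in\mathbb{T}$.
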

\begin{proof}
By using Proposition 4 in \cite{Demlow} and noticing that $S_\mathcal{T} u_\mathcal{T}$ and $S^*_\mathcal{T}(S_\mathcal{T}u_\mathcal{T}-y_d)$ are the standard finite element approximations of $Su_\mathcal{T}$ and $S^*(S_\mathcal{T}u_\mathcal{T}-y_d)$ in finite element space $V_\mathcal{T}$, we have the following a posteriori upper bounds for $\||h_\mathcal{T}(Su_\mathcal{T}-S_\mathcal{T}u_{\mathcal{T}})\||$ and $\||h_\mathcal{T}(S^*(S_\mathcal{T}u_\mathcal{T}-y_d)-S^*_\mathcal{T}(S_\mathcal{T}u_\mathcal{T}-y_d))\||$:
\begin{eqnarray}
\||h_\mathcal{T}(Su_\mathcal{T}-S_\mathcal{T}u_{\mathcal{T}})\||\lesssim\eta_{\mathcal{T},y}(u_\mathcal{T},y_\mathcal{T},\mathcal{T}),\label{Lemma3.1_1}\\
\||h_\mathcal{T}(S^*(S_\mathcal{T}u_\mathcal{T}-y_d)-S^*_\mathcal{T}(S_\mathcal{T}u_\mathcal{T}-y_d))\||\lesssim\eta_{\mathcal{T},p}(y_\mathcal{T},p_\mathcal{T},\mathcal{T}).\label{Lemma3.1_2}
\end{eqnarray}
From the triangle inequality now it suffices to estimate $\||h_\mathcal{T}(y-Su_\mathcal{T})\||$ and $\||h_\mathcal{T}(p-S^*(S_\mathcal{T}u_\mathcal{T}-y_d))\||$. We use the abbreviation $e_\mathcal{T}=y-Su_\mathcal{T}$ and note that
\begin{eqnarray}
\||h_\mathcal{T}(y-Su_\mathcal{T})\||^2&=&(A\nabla (h_\mathcal{T}e_\mathcal{T}),\nabla(h_\mathcal{T}e_\mathcal{T}))+(a_0h_\mathcal{T}e_\mathcal{T},h_\mathcal{T}e_\mathcal{T})\nonumber\\
&=&(A\nabla h_\mathcal{T}e_\mathcal{T},\nabla h_\mathcal{T}e_\mathcal{T})+(Ah_\mathcal{T}\nabla e_\mathcal{T},h_\mathcal{T}\nabla e_\mathcal{T})+2(A\nabla h_\mathcal{T}e_\mathcal{T},h_\mathcal{T}\nabla e_\mathcal{T})\nonumber\\
&&+(a_0h_\mathcal{T}e_\mathcal{T},h_\mathcal{T}e_\mathcal{T})\nonumber\\
&\lesssim&\|\nabla h_\mathcal{T}\|_{0,\infty,\Omega}^2\|e_\mathcal{T}\|_{0,\Omega}^2+\|h_\mathcal{T}\|_{0,\infty,\Omega}^2\||e_\mathcal{T}\||^2\nonumber\\
&&+2\|\nabla h_\mathcal{T}\|_{0,\infty,\Omega}\|h_\mathcal{T}\|_{0,\infty,\Omega}\|e_\mathcal{T}\|_{0,\Omega}\||e_\mathcal{T}\||\nonumber\\
&\lesssim& \|u-u_\mathcal{T}\|_{0,\Omega}^2,\label{energy_1}
\end{eqnarray}
where we used the fact that $\| h_\mathcal{T}\|_{0,\infty,\Omega}\lesssim 1$, $\|\nabla h_\mathcal{T}\|_{0,\infty,\Omega}\lesssim 1$ and the stability result for elliptic boundary value problem. Similarly, we can conclude from Lemma \ref{Lemma:1} and (\ref{Lemma3.1_1}) that
\begin{eqnarray}
\||h_\mathcal{T}(p-S^*(S_\mathcal{T}u_\mathcal{T}-y_d))\||^2&\lesssim& \|y-y_\mathcal{T}\|_{0,\Omega}^2\nonumber\\
&\lesssim&\|y-Su_\mathcal{T}\|_{0,\Omega}^2+\||h_\mathcal{T}(Su_\mathcal{T}-S_\mathcal{T}u_{\mathcal{T}})\||^2\nonumber\\
&\lesssim&\|u-u_\mathcal{T}\|_{0,\Omega}^2+\eta^2_{\mathcal{T},y}(u_\mathcal{T},y_\mathcal{T},\mathcal{T}).\label{energy_2}
\end{eqnarray}
Combining Theorem \ref{Thm:1}, (\ref{Lemma3.1_1})-(\ref{energy_2}) and the triangle inequality we finish the proof.
\end{proof}

It follows from Theorem \ref{Thm:1} and Lemma \ref{Lemma:12} that
\begin{eqnarray}
\||h_\mathcal{T}(y-y_\mathcal{T})\||^2+\||h_\mathcal{T}(p-p_\mathcal{T})\||^2\lesssim \|u-u_\mathcal{T}\|_{0,\Omega}^2+\|y-y_\mathcal{T}\|_{0,\Omega}^2+\|p-p_\mathcal{T}\|_{0,\Omega}^2+{\rm osc}_{\mathcal{T}}^2.\nonumber
\end{eqnarray}
The following stability results for the error estimators are  direct consequences of \cite[Lemma 1]{Demlow}, see also \cite[Proposition 3.3]{Cascon}.
\begin{Lemma}\label{Lemma:3}
For any $\mathcal{T}\in \mathbb{T}$, let $u_\mathcal{T},\tilde{u}_\mathcal{T}\in U_{ad}$, $y_\mathcal{T},\tilde{y}_\mathcal{T},p_\mathcal{T},\tilde{p}_\mathcal{T}\in V_\mathcal{T}$. Under Assumption \ref{Ass:1} we have that 
\begin{eqnarray}
|\eta_{\mathcal{T},y}(u_\mathcal{T},y_\mathcal{T},T)-\eta_{\mathcal{T},y}(\tilde u_\mathcal{T},\tilde y_\mathcal{T},T)|&\lesssim& \||h_\mathcal{T}(y_\mathcal{T}-\tilde y_\mathcal{T})\||_{\tilde{\omega}_T}+h_T^2\|u_\mathcal{T}-\tilde u_\mathcal{T}\|_{0,T}\nonumber\\
&&+\|\nabla h_\mathcal{T}\|_{0,\infty,\tilde{\omega}_T}\|y_\mathcal{T}-\tilde y_\mathcal{T}\|_{0,\tilde{\omega}_T},\label{estimatory_pert}\\
|\eta_{\mathcal{T},p}(y_\mathcal{T},p_\mathcal{T},T)-\eta_{\mathcal{T},p}(\tilde y_\mathcal{T},\tilde p_\mathcal{T},T)|&\lesssim& \||h_\mathcal{T}(p_\mathcal{T}-\tilde p_\mathcal{T})\||_{\tilde{\omega}_T}+h_T^2\|y_\mathcal{T}-\tilde y_\mathcal{T}\|_{0,T}\nonumber\\
&&+\|\nabla h_\mathcal{T}\|_{0,\infty,\tilde{\omega}_T}\|p_\mathcal{T}-\tilde p_\mathcal{T}\|_{0,\tilde{\omega}_T}.\label{estimatorp_pert}
\end{eqnarray}
\end{Lemma}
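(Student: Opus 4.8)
The plan is to exploit the norm structure of the estimators together with the perturbation result \cite[Lemma 1]{Demlow}, which we may invoke for our variable-coefficient operator in view of the remark following Lemma \ref{Lemma:1}. I would treat only (\ref{estimatory_pert}), the argument for (\ref{estimatorp_pert}) being entirely analogous with $p$, $A^*$ and $\mathcal{L}^*$ in place of $y$, $A$ and $\mathcal{L}$. The first observation is that, for a fixed element $T$, the quantity $\eta_{\mathcal{T},y}(\cdot,\cdot,T)$ is the $\ell^2$-norm of the scaled element residual $h_T^2\|f+u_\mathcal{T}-\mathcal{L}y_\mathcal{T}\|_{0,T}$ together with the scaled edge jumps $h_E^{3/2}\|[A\nabla y_\mathcal{T}]_E\cdot n_E\|_{0,E}$. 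Since the triangle inequality holds for this $\ell^2$-norm and the fixed datum $f$ cancels in the difference of the two residuals, the reverse triangle inequality yields
\begin{eqnarray}
&&|\eta_{\mathcal{T},y}(u_\mathcal{T},y_\mathcal{T},T)-\eta_{\mathcal{T},y}(\tilde u_\mathcal{T},\tilde y_\mathcal{T},T)| \nonumber\\
&\leq& \Big(h_T^4\|(u_\mathcal{T}-\tilde u_\mathcal{T})-\mathcal{L}(y_\mathcal{T}-\tilde y_\mathcal{T})\|_{0,T}^2 + \sum_{E\subset\partial T}h_E^3\|[A\nabla(y_\mathcal{T}-\tilde y_\mathcal{T})]_E\cdot n_E\|_{0,E}^2\Big)^{1/2}. \nonumber
\end{eqnarray}
Splitting off the control contribution by one more triangle inequality produces the term $h_T^2\|u_\mathcal{T}-\tilde u_\mathcal{T}\|_{0,T}$ directly, and leaves precisely the weighted state estimator evaluated at the difference $w:=y_\mathcal{T}-\tilde y_\mathcal{T}\in V_\mathcal{T}$.

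For this remaining piece I would apply the variable-coefficient analogue of \cite[Lemma 1]{Demlow}, i.e.\ bound $h_T^2\|\mathcal{L}w\|_{0,T}$ and the scaled jumps $h_E^{3/2}\|[A\nabla w]_E\cdot n_E\|_{0,E}$ by $\||h_\mathcal{T}w\||_{\tilde\omega_T}+\|\nabla h_\mathcal{T}\|_{0,\infty,\tilde\omega_T}\|w\|_{0,\tilde\omega_T}$. Since $w$ is piecewise linear, on each element $\mathcal{L}w=-\sum_{i,j}\frac{\partial a_{ij}}{\partial x_j}\frac{\partial w}{\partial x_i}+a_0 w$, so the second-order term disappears and, using $a_{ij}\in W^{1,\infty}(\Omega)$ and $a_0$ bounded, $h_T^2\|\mathcal{L}w\|_{0,T}\lesssim h_T^2\|\nabla w\|_{0,T}+h_T^2\|w\|_{0,T}$; an inverse estimate on the polynomial $w$ together with $h_T\lesssim 1$ then reduces this to $h_T\|\nabla w\|_{0,T}$ up to lower order. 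The jump terms are handled by the standard scaled trace and inverse estimates for the piecewise linear $w$ over the patch $\tilde\omega_T$, again producing $h_T\|\nabla w\|_{0,\tilde\omega_T}$.

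Finally, expanding $\||h_\mathcal{T}w\||_{\tilde\omega_T}^2=\int_{\tilde\omega_T}A\nabla(h_\mathcal{T}w)\cdot\nabla(h_\mathcal{T}w)+\int_{\tilde\omega_T}a_0(h_\mathcal{T}w)^2$ and using $\nabla(h_\mathcal{T}w)=h_\mathcal{T}\nabla w+w\nabla h_\mathcal{T}$ together with $h_\mathcal{T}\simeq h_T$ on $\tilde\omega_T$ from (\ref{shape_regular}) gives the equivalence $h_T\|\nabla w\|_{0,\tilde\omega_T}\lesssim \||h_\mathcal{T}w\||_{\tilde\omega_T}+\|\nabla h_\mathcal{T}\|_{0,\infty,\tilde\omega_T}\|w\|_{0,\tilde\omega_T}$, the last summand absorbing the commutator $w\nabla h_\mathcal{T}$. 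Combining these estimates yields (\ref{estimatory_pert}). The main obstacle is the variable-coefficient operator: unlike the Laplacian treated in \cite{Demlow}, $\mathcal{L}w$ does not vanish on elements for piecewise linear $w$, so one must carefully absorb the first-order coefficient-derivative term $\sum_{i,j}\frac{\partial a_{ij}}{\partial x_j}\frac{\partial w}{\partial x_i}$ and check that its weighted norm is still controlled by the weighted energy norm of $w$; this is exactly where the $W^{1,\infty}$-regularity of the coefficients and the inverse estimate are needed. The second, more routine, point already present in \cite{Demlow} is tracking the low-order term $\|\nabla h_\mathcal{T}\|_{0,\infty,\tilde\omega_T}\|w\|_{0,\tilde\omega_T}$ arising in passing between $\||h_\mathcal{T}w\||_{\tilde\omega_T}$ and $h_T\||w\||_{\tilde\omega_T}$, which is the reason the third summand appears in the stated bound.
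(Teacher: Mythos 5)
Your proposal is correct and takes essentially the same route as the paper's proof: a reverse triangle inequality on the $\ell^2$-structure of the estimator (the fixed datum $f$ cancelling), a further triangle inequality to split off $h_T^2\|u_\mathcal{T}-\tilde u_\mathcal{T}\|_{0,T}$, the observation that for the piecewise linear difference $w:=y_\mathcal{T}-\tilde y_\mathcal{T}$ only $-\mbox{div}\, A\cdot\nabla w+a_0w$ survives in $\mathcal{L}w$ (controlled via $a_{ij}\in W^{1,\infty}(\Omega)$), a scaled trace inequality for the jumps, and the product rule $\nabla(h_\mathcal{T}w)=h_\mathcal{T}\nabla w+w\nabla h_\mathcal{T}$ with (\ref{shape_regular}) to produce $\||h_\mathcal{T}w\||_{\tilde{\omega}_T}$ plus the commutator term $\|\nabla h_\mathcal{T}\|_{0,\infty,\tilde{\omega}_T}\|w\|_{0,\tilde{\omega}_T}$. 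The only differences are cosmetic: the paper carries out the variable-coefficient bounds inline rather than invoking an analogue of \cite[Lemma 1]{Demlow}, and it passes over the harmless lower-order contribution $h_T^2\|a_0w\|_{0,T}$ just as silently as your ``up to lower order''.
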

\begin{proof}
We only prove (\ref{estimatory_pert}), the proof of (\ref{estimatorp_pert}) is very similar and we will omit it. Note that
\begin{eqnarray}
\eta_{\mathcal{T},y}(u_\mathcal{T},y_\mathcal{T},T)&\leq& \eta_{\mathcal{T},y}(\tilde u_\mathcal{T},\tilde y_\mathcal{T},T)+(h_T^4\|u_\mathcal{T}-\tilde u_\mathcal{T}-\mathcal{L}(y_\mathcal{T}-\tilde y_\mathcal{T})\|_{0,T}^2\nonumber\\
&&+\sum\limits_{E\in \mathcal{E}_\mathcal{T},E\subset\partial T}h_E^{3}\|[A\nabla (y_\mathcal{T}-\tilde y_\mathcal{T})]_E\cdot n_E\|_{0,E}^2)^{1\over 2}.\nonumber
\end{eqnarray}
Noting that $y_\mathcal{T}$ and $\tilde y_\mathcal{T}$  are linear polynomials on $T$, it is easy to verify that
\begin{eqnarray}
h_T^2\|\mathcal{L}(y_\mathcal{T}-\tilde y_\mathcal{T})\|_{0,T}&=&h_T^2(\|-\mbox{div} A\cdot\nabla (y_\mathcal{T}-\tilde y_\mathcal{T})+a_0(y_\mathcal{T}-\tilde y_\mathcal{T})\|_{0,T})\nonumber\\
&\lesssim&\||h_\mathcal{T} (y_\mathcal{T}-\tilde y_\mathcal{T})\||_{T}.
\end{eqnarray}
We recall the trace inequality: for any $T\in\mathcal{T}$ and $v_\mathcal{T}\in V_\mathcal{T}$ there holds
\begin{eqnarray}
\|\nabla v_\mathcal{T}\|_{0,\partial T}\lesssim h_T^{-{1\over 2}}\|\nabla v_\mathcal{T}\|_{0,T}+h_T^{{1\over 2}}\|\nabla^2 v_\mathcal{T}\|_{0,T}.\nonumber
\end{eqnarray}
The second term in above inequality vanishes because $v_\mathcal{T}$ is linear polynomial on $T$. Thus, for the edge $E=T\cap T'$ we have
\begin{eqnarray}
h_E^{3\over 2}\|[A\nabla (y_\mathcal{T}-\tilde y_\mathcal{T})]_E\cdot n_E\|_{0,E}&\lesssim &h_E^{3\over 2}(\|\nabla (y_\mathcal{T}-\tilde y_\mathcal{T})_T\|_{0,E}+\|\nabla (y_\mathcal{T}-\tilde y_\mathcal{T})_{T'}\|_{0,E})\nonumber\\
&\lesssim &h_T(\|\nabla (y_\mathcal{T}-\tilde y_\mathcal{T})\|_{0,T}+\|\nabla (y_\mathcal{T}-\tilde y_\mathcal{T})\|_{0,T'})\nonumber\\
&\lesssim&\||h_\mathcal{T} (y_\mathcal{T}-\tilde y_\mathcal{T})\||_{T\cup T'}+\|\nabla h_\mathcal{T} (y_\mathcal{T}-\tilde y_\mathcal{T})\|_{0,T\cup T'},
\end{eqnarray}
where we used the property (\ref{shape_regular}). Summing over the edges of $T$ we complete the proof.
\end{proof}

%Then we present the localized upper bounds for the approximations of the state and adjoint state equations.
%\begin{Lemma}\label{Lemma:4}
%Given sufficiently small $\mu$, let $\mathcal{T}\in \mathbb{T}_\mu$ and $\mathcal{T}\subset\tilde{\mathcal{T}}\in \mathbb{T}$. Then  for any $f\in L^2(\Omega)$ and $g\in L^2(\Omega)$ there hold
%\begin{eqnarray}
%\|S_\mathcal{T}f-S_{\tilde{\mathcal{T}}}f\|_{0,\Omega}\leq C\eta_{\mathcal{T},y} (f, S_\mathcal{T}f,\mathcal{R}_{\mathcal{T}\rightarrow \tilde{\mathcal{T}}}),\label{localized_y}\\
%\|S_\mathcal{T}^*g-S_{\tilde{\mathcal{T}}}^*g\|_{0,\Omega}\leq C\eta_{\mathcal{T},p} (g, S_\mathcal{T}^*g,\mathcal{R}_{\mathcal{T}\rightarrow \tilde{\mathcal{T}}}),\label{localized_p}
%\end{eqnarray}
%where $\mathcal{R}_{\mathcal{T}\rightarrow \tilde{\mathcal{T}}}$ is the subset of elements that are refined from $\mathcal{T}$ to $\tilde{\mathcal{T}}$.
%\end{Lemma}
%\begin{proof}
%Note that $S_\mathcal{T}u$ and $S_{\tilde{\mathcal{T}}}u$ are the finite element approximations of $Su$ on $V_\mathcal{T}$ and $V_{\tilde{\mathcal{T}}}$ associated with partitions $\mathcal{T}$ and $\tilde{\mathcal{T}}$, respectively. Then (\ref{localized_y}) is a direct consequence of Lemma 2 in \cite{Demlow}. The proof of  (\ref{localized_p}) is similar.
%\end{proof}

The following lemma presents a quasi-orthogonality result for the solution of elliptic boundary value problem.
\begin{Lemma}(\cite[Lemma 3]{Demlow})\label{Lemma:5}
For any $\epsilon>0$, $\mathcal{T}\in \mathbb{T}$, $\tilde y_\mathcal{T},\tilde p_\mathcal{T}\in V_{\mathcal{T}}$ and $f,g\in L^2(\Omega)$, it holds that
\begin{eqnarray}
&&\||h_\mathcal{T}(Sf-S_\mathcal{T}f)\||^2+\||h_\mathcal{T}(S_\mathcal{T}f-\tilde y_\mathcal{T})\||^2-(1+\epsilon)\||h_\mathcal{T}(Sf-\tilde y_\mathcal{T})\||^2\nonumber\\
&\leq& \epsilon^{-1}\|\nabla h_\mathcal{T}\|_{0,\infty,\Omega}^2(\|Sf-S_\mathcal{T}f\|_{0,\Omega}^2+\|Sf-\tilde y_\mathcal{T}\|_{0,\Omega}^2),\\
&&\||h_\mathcal{T}(S^*g-S^*_\mathcal{T}g)\||^2+\||h_\mathcal{T}(S^*_\mathcal{T}g-\tilde p_\mathcal{T})\||^2-(1+\epsilon)\||h_\mathcal{T}(S^*g-\tilde p_\mathcal{T})\||^2\nonumber\\
&\leq& \epsilon^{-1}\|\nabla h_\mathcal{T}\|_{0,\infty,\Omega}^2(\|S^*g-S^*_\mathcal{T}g\|_{0,\Omega}^2+\|S^*g-\tilde p_\mathcal{T}\|_{0,\Omega}^2).
\end{eqnarray}
\end{Lemma}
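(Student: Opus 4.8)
The plan is to reduce the claimed quasi-orthogonality to a bound on a single cross term, which I then control by combining Galerkin orthogonality with the smallness of $\nabla h_\mathcal{T}$. Write $e:=Sf-S_\mathcal{T}f$ for the genuine finite element error and $\chi:=S_\mathcal{T}f-\tilde y_\mathcal{T}\in V_\mathcal{T}$, so that $Sf-\tilde y_\mathcal{T}=e+\chi$ and, by the defining property of $S_\mathcal{T}f$, the Galerkin orthogonality $a(e,v_\mathcal{T})=0$ holds for every $v_\mathcal{T}\in V_\mathcal{T}$. Expanding the three weighted energy norms through the bilinear form gives the exact identity
\begin{eqnarray}
\||h_\mathcal{T}e\||^2+\||h_\mathcal{T}\chi\||^2-\||h_\mathcal{T}(e+\chi)\||^2=-2a(h_\mathcal{T}e,h_\mathcal{T}\chi),\nonumber
\end{eqnarray}
so the left-hand side of the assertion equals $-2a(h_\mathcal{T}e,h_\mathcal{T}\chi)-\epsilon\||h_\mathcal{T}(e+\chi)\||^2$, and it suffices to show that $-2a(h_\mathcal{T}e,h_\mathcal{T}\chi)$ is controlled by $\epsilon\||h_\mathcal{T}(e+\chi)\||^2$ together with the stated $L^2$ remainder. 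The second inequality is identical after replacing $S,S_\mathcal{T},a(\cdot,\cdot)$ by $S^*,S^*_\mathcal{T}$ and the adjoint form, using the part of Assumption \ref{Ass:1} valid for $S^*$.

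First I would isolate the part of the cross term that survives when the weight is frozen. Applying the product rule $\nabla(h_\mathcal{T}v)=h_\mathcal{T}\nabla v+v\nabla h_\mathcal{T}$ to both arguments and using the symmetry of $A$, the quantity $a(h_\mathcal{T}e,h_\mathcal{T}\chi)$ splits into the frozen-weight contribution $\int h_\mathcal{T}^2(A\nabla e\cdot\nabla\chi+a_0 e\chi)$ plus four remainder integrals, each carrying at least one explicit factor $\nabla h_\mathcal{T}$, namely $(A\,h_\mathcal{T}\nabla e,\chi\nabla h_\mathcal{T})$, $(A\,e\nabla h_\mathcal{T},h_\mathcal{T}\nabla\chi)$ and $(A\,e\nabla h_\mathcal{T},\chi\nabla h_\mathcal{T})$. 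These remainders are estimated directly by Cauchy--Schwarz and $\|\nabla h_\mathcal{T}\|_{0,\infty,\Omega}$, with the $L^2$ factors $\|e\|_{0,\Omega}$ and the weighted-energy factors $\|h_\mathcal{T}\nabla e\|_{0,\Omega}$, $\|h_\mathcal{T}\nabla\chi\|_{0,\Omega}$.

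The crux is the frozen-weight term: were $h_\mathcal{T}^2$ constant it would equal $h_\mathcal{T}^2a(e,\chi)=0$ by Galerkin orthogonality, so only the variation of the weight should remain. I make this precise by rewriting $\int h_\mathcal{T}^2(A\nabla e\cdot\nabla\chi+a_0 e\chi)=a(e,h_\mathcal{T}^2\chi)-2(A\nabla e,h_\mathcal{T}\chi\nabla h_\mathcal{T})$, the last integral again carrying a factor $\nabla h_\mathcal{T}$. Since $h_\mathcal{T}^2\chi\in H_0^1(\Omega)$ is continuous but not in $V_\mathcal{T}$, I subtract its Lagrange interpolant and use orthogonality: $a(e,h_\mathcal{T}^2\chi)=a(e,h_\mathcal{T}^2\chi-\Pi_\mathcal{T}(h_\mathcal{T}^2\chi))$. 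Because $h_\mathcal{T}$ and $\chi$ are piecewise linear, on each element $\nabla^2(h_\mathcal{T}^2\chi)$ only involves products containing $\nabla h_\mathcal{T}$ (the terms $|\nabla h_\mathcal{T}|^2\chi$ and $h_\mathcal{T}\nabla h_\mathcal{T}\cdot\nabla\chi$), so the elementwise interpolation error estimates, combined with (\ref{shape_regular}), bound $a(e,h_\mathcal{T}^2\chi)$ once more by $\|\nabla h_\mathcal{T}\|_{0,\infty,\Omega}$ times $L^2$ and weighted-energy factors. Collecting all contributions yields an estimate of the form $|a(h_\mathcal{T}e,h_\mathcal{T}\chi)|\lesssim\|\nabla h_\mathcal{T}\|_{0,\infty,\Omega}(\|e\|_{0,\Omega}+\|e+\chi\|_{0,\Omega})\,\||h_\mathcal{T}(e+\chi)\||$, and Young's inequality with parameter $\epsilon$ absorbs the energy factor into $\epsilon\||h_\mathcal{T}(e+\chi)\||^2$, leaving exactly $\epsilon^{-1}\|\nabla h_\mathcal{T}\|_{0,\infty,\Omega}^2(\|e\|_{0,\Omega}^2+\|e+\chi\|_{0,\Omega}^2)$.

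I expect the main obstacle to be precisely this frozen-weight term, and within it the bookkeeping that routes the separate energy factors $\||h_\mathcal{T}e\||$ and $\||h_\mathcal{T}\chi\||$ into the single factor $\||h_\mathcal{T}(e+\chi)\||$ that appears in the statement, arranged so that Young's inequality reproduces the sharp coefficient $(1+\epsilon)$ rather than a larger constant; this is where the $\nabla h_\mathcal{T}$-corrections relating $\|h_\mathcal{T}\nabla v\|_{0,\Omega}$ to $\||h_\mathcal{T}v\||$ must be tracked carefully. By contrast, the variable coefficient matrix $A\in W^{1,\infty}$ and the zeroth-order term $a_0$, absent from the Laplacian treated in \cite[Lemma 3]{Demlow}, introduce no new difficulty: they are handled in the product-rule expansion exactly as in the extension carried out in Lemma \ref{Lemma:1}.
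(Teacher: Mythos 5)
Your skeleton is the right one, and it is in fact the skeleton of the proof this paper points to: the paper does not prove Lemma \ref{Lemma:5} at all (it is quoted verbatim from \cite[Lemma 3]{Demlow}), and the same circle of ideas --- expand the cross term $a(h_\mathcal{T}e,h_\mathcal{T}\chi)$ by the product rule, restore Galerkin orthogonality through $\Pi_\mathcal{T}(h_\mathcal{T}^2\chi)$, and exploit that $h_\mathcal{T}$ and $\chi$ are piecewise linear in the interpolation estimate --- is exactly what the paper carries out for its optimal-control analogue, Lemma \ref{Lemma:9}. Up to and including the identity $\||h_\mathcal{T}e\||^2+\||h_\mathcal{T}\chi\||^2-\||h_\mathcal{T}(e+\chi)\||^2=-2a(h_\mathcal{T}e,h_\mathcal{T}\chi)$ and the decomposition of the cross term, your argument is correct.

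The gap is in the collection step, which you yourself flag as the crux but then assert rather than prove; write $\mu:=\|\nabla h_\mathcal{T}\|_{0,\infty,\Omega}$. First, a missing tool: your interpolation estimate gives $\||h_\mathcal{T}^2\chi-\Pi_\mathcal{T}(h_\mathcal{T}^2\chi)\||_T\lesssim h_T\bigl(\mu^2\|\chi\|_{0,T}+\mu\|h_\mathcal{T}\nabla\chi\|_{0,T}\bigr)$, and pairing this with $\||e\||_T$ (the factor $h_T$ being converted into a weight on $e$ via (\ref{shape_regular})) leaves the term $\mu\,\|h_\mathcal{T}\nabla e\|_{0,\Omega}\,\|h_\mathcal{T}\nabla\chi\|_{0,\Omega}$: two energy factors and only one power of $\mu$. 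No application of Young's inequality converts such a product into $\epsilon(\mathrm{energy})^2+\epsilon^{-1}\mu^2(L^2)^2$ for arbitrary $\epsilon>0$, because the $\mu^2$ necessarily lands on an energy norm, which is not allowed on the right-hand side of the lemma. The missing ingredient is the inverse inequality $h_T\|\nabla\chi\|_{0,T}\lesssim\|\chi\|_{0,T}$, valid precisely because $\chi=S_\mathcal{T}f-\tilde y_\mathcal{T}\in V_\mathcal{T}$ is discrete; this is exactly the tool the paper invokes at the corresponding point (\ref{orth_y_3}) in the proof of Lemma \ref{Lemma:9}, and it never appears in your argument. Second, your absorption target is wrong. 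The collected bound you claim, $|a(h_\mathcal{T}e,h_\mathcal{T}\chi)|\lesssim\mu(\|e\|_{0,\Omega}+\|e+\chi\|_{0,\Omega})\||h_\mathcal{T}(e+\chi)\||$, is not what the individual estimates produce --- they produce the separate factors $\|h_\mathcal{T}\nabla e\|_{0,\Omega}$ and $\|h_\mathcal{T}\nabla\chi\|_{0,\Omega}$ --- and forcing them into the single factor $\||h_\mathcal{T}(e+\chi)\||$ through $\||h_\mathcal{T}e\||\leq\||h_\mathcal{T}(e+\chi)\||+\||h_\mathcal{T}\chi\||$ and $\||h_\mathcal{T}\chi\||\lesssim\|\chi\|_{0,\Omega}$ creates leftover terms of the type $\mu\|\chi\|_{0,\Omega}^2$: one power of $\mu$, purely $L^2$. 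Such a term is not bounded by $C\epsilon^{-1}\mu^2(L^2)^2$ once $\epsilon\gg\mu$, and that is precisely the regime in which the lemma is applied (in Theorem \ref{Thm:3}, $\epsilon$ and $\delta$ are fixed in terms of $\theta$ and $\lambda$ \emph{before} $\mu$ and $h_0$ are taken small); so your route proves at best a weaker statement than the one claimed. The correct bookkeeping is the one the paper itself uses in (\ref{orth_y_4})--(\ref{orth_y_5}): after Young's inequality with a parameter $\delta$, absorb the $\delta\|h_\mathcal{T}\nabla e\|^2$ and $\delta\|h_\mathcal{T}\nabla\chi\|^2$ contributions into the unit-coefficient terms $\||h_\mathcal{T}e\||^2$ and $\||h_\mathcal{T}\chi\||^2$ on the \emph{left} of the asserted inequality (up to $\mu^2(L^2)^2$ corrections), then divide by $1-C\delta$ and choose $\delta$ so that $(1-C\delta)^{-1}=1+\epsilon$; the slack then appears as the factor $(1+\epsilon)$ on $\||h_\mathcal{T}(e+\chi)\||^2$, as required.
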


We also recall the following inequalities which are opposed to Lemma \ref{Lemma:1}.
\begin{Lemma}(\cite[Lemma 4]{Demlow})\label{Lemma:6}
For any $\mathcal{T}\in \mathbb{T}$ and $f,g \in L^2(\Omega)$ there holds
\begin{eqnarray}
\||h_\mathcal{T}(Sf-\tilde y_\mathcal{T})\||\lesssim \|Sf-\tilde y_\mathcal{T}\|_{0,\Omega}+{\rm osc}(f-\mathcal{L}\tilde y_\mathcal{T},\mathcal{T}),\quad \forall \tilde y_\mathcal{T}\in V_\mathcal{T},\\
\||h_\mathcal{T}(S^*g-\tilde p_\mathcal{T})\||\lesssim \|S^*g-\tilde p_\mathcal{T}\|_{0,\Omega}+{\rm osc}(g-\mathcal{L}^*\tilde p_\mathcal{T},\mathcal{T}),\quad \forall \tilde p_\mathcal{T}\in V_\mathcal{T}.
\end{eqnarray}
\end{Lemma}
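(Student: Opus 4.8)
The plan is to bound the weighted energy norm of $e:=Sf-\tilde y_\mathcal{T}$ by testing the residual of the state equation against the weight $h_\mathcal{T}^2e\in H_0^1(\Omega)$, and then to convert the resulting $L^2$-type residual estimator into $\|e\|_{0,\Omega}$ plus oscillation by means of \emph{second-order} bubble functions. Neither Assumption \ref{Ass:1} nor the grading of the mesh will be needed, which is consistent with the generality of the statement (``for any $\mathcal{T}\in\mathbb{T}$'').

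First I would record the algebraic identity
\[
\||h_\mathcal{T}e\||^2=a(e,h_\mathcal{T}^2e)+(Ae\nabla h_\mathcal{T},e\nabla h_\mathcal{T}),
\]
obtained by expanding $\nabla(h_\mathcal{T}e)=h_\mathcal{T}\nabla e+e\nabla h_\mathcal{T}$ inside $a(h_\mathcal{T}e,h_\mathcal{T}e)$. The last term is harmless since $(Ae\nabla h_\mathcal{T},e\nabla h_\mathcal{T})\lesssim\|\nabla h_\mathcal{T}\|_{0,\infty,\Omega}^2\|e\|_{0,\Omega}^2\lesssim\|e\|_{0,\Omega}^2$, so the task reduces to estimating $a(e,h_\mathcal{T}^2e)$. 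Since $a(Sf,v)=(f,v)$ for all $v\in H_0^1(\Omega)$, elementwise integration by parts gives
\[
a(e,h_\mathcal{T}^2e)=\sum_{T}(r,h_\mathcal{T}^2e)_T-\sum_{E}(J_E,h_\mathcal{T}^2e)_E,\qquad r:=f-\mathcal{L}\tilde y_\mathcal{T},\quad J_E:=[A\nabla\tilde y_\mathcal{T}]_E\cdot n_E.
\]
Cauchy--Schwarz together with the scaled trace inequality then yields $a(e,h_\mathcal{T}^2e)\lesssim\big(\sum_T h_T^4\|r\|_{0,T}^2+\sum_E h_E^{3}\|J_E\|_{0,E}^2\big)^{1/2}(\|e\|_{0,\Omega}+\|h_\mathcal{T}\nabla e\|_{0,\Omega})$, and I note that $\|h_\mathcal{T}\nabla e\|_{0,\Omega}\lesssim\||h_\mathcal{T}e\||+\|e\|_{0,\Omega}$ by ellipticity.

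The heart of the argument, and the step I expect to be the main obstacle, is the \emph{gradient-free} efficiency bound
\[
\Big(\sum_T h_T^4\|r\|_{0,T}^2+\sum_E h_E^{3}\|J_E\|_{0,E}^2\Big)^{1/2}\lesssim\|e\|_{0,\Omega}+{\rm osc}(f-\mathcal{L}\tilde y_\mathcal{T},\mathcal{T}),
\]
whose right-hand side carries the $L^2$ error of $e$ rather than its weighted energy error. The usual residual/bubble technique only delivers $\|h_\mathcal{T}\nabla e\|$ on the right, which is useless here; to suppress the gradient I would test the elementwise identity $\mathcal{L}e=r$ with the second-order bubble $\beta_T=b_T^2\in H_0^2(T)$, $b_T$ the interior bubble. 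Because $\beta_T$ and $\nabla\beta_T$ vanish on $\partial T$, integrating by parts twice gives $(r,\beta_T)_T=(e,\mathcal{L}^*\beta_T)_T$ with no derivative on $e$; using $\|\mathcal{L}^*\beta_T\|_{0,T}\lesssim h_T^{-2}\|\beta_T\|_{0,T}$ and the extra weight $h_T^{2}$ one reaches $h_T^2\|\bar r_T\|_{0,T}\lesssim\|e\|_{0,T}+{\rm osc}(r,T)$, hence $h_T^2\|r\|_{0,T}\lesssim\|e\|_{0,T}+{\rm osc}(r,T)$. For the facet terms I would repeat the double integration by parts over the two elements $T,T'$ sharing $E$, testing with a facet bubble $\Phi_E\in H_0^2$ that is $C^1$ across $E$ so that the jump of $\nabla\Phi_E$ on $E$ does not contribute, obtaining $h_E^{3/2}\|J_E\|_{0,E}\lesssim\|e\|_{0,T\cup T'}+{\rm osc}(r,T\cup T')$. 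The delicate point is exactly the construction of a second-order facet bubble that is globally $C^1$ across $E$ while supported in the patch and vanishing to second order on its outer boundary; here a reference-patch construction and the uniform shape regularity of $\mathbb{T}$ enter.

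Finally I would combine the three displays. Writing $N:=\||h_\mathcal{T}e\||$ and $D:=\|e\|_{0,\Omega}+{\rm osc}(f-\mathcal{L}\tilde y_\mathcal{T},\mathcal{T})$, the estimates give $N^2\lesssim D\,(N+D)+\|e\|_{0,\Omega}^2\lesssim DN+D^2$, and Young's inequality absorbs $DN$ into $N^2$ on the left, leaving $N\lesssim D$, which is the first claimed inequality. The second inequality follows verbatim upon replacing $S,A,\mathcal{L},f$ by $S^*,A^*,\mathcal{L}^*,g$.
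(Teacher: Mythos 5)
Your proposal supplies a proof that the paper itself never gives: for this lemma the paper's ``proof'' is the citation \cite[Lemma 4]{Demlow} (stated there for the Laplacian), together with the blanket remark after Lemma \ref{Lemma:1} that the results of \cite{Demlow} extend to general second-order operators with smooth coefficients. Your argument is sound and, conceptually, it reconstructs the mechanism behind the cited result rather than inventing a new one: the weighted-testing identity $\||h_\mathcal{T}e\||^2=a(e,h_\mathcal{T}^2e)+(Ae\nabla h_\mathcal{T},e\nabla h_\mathcal{T})$ (which uses the symmetry of $A$), followed by a gradient-free efficiency bound obtained by integrating by parts \emph{twice} against test functions vanishing to second order on element or patch boundaries. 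What your write-up adds over the paper is the explicit variable-coefficient bookkeeping (the term $(Ae\nabla h_\mathcal{T},e\nabla h_\mathcal{T})$, the lower-order pieces of $\mathcal{L}^*\beta_T$), and the useful observation that only $\|\nabla h_\mathcal{T}\|_{0,\infty,\Omega}\leq \hat C_{\mathbb{T}}$ and shape regularity enter, so neither Assumption \ref{Ass:1} nor the grading parameter $\mu$ is needed --- which is exactly what the statement ``for any $\mathcal{T}\in\mathbb{T}$'' demands, and which the paper leaves implicit.

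Three compressed steps need care before this is a complete proof. First, in the element estimate you must test with $\bar r_T\beta_T$ rather than $\beta_T$ alone, using $\|\bar r_T\|_{0,T}^2\lesssim(\bar r_T,\bar r_T\beta_T)_T$ and $|\bar r_T|\,\|\beta_T\|_{0,T}\lesssim\|\bar r_T\|_{0,T}$; testing with $\beta_T$ by itself does not produce the squared norm. Second --- the point you rightly flag as delicate --- the $C^1$ facet function cannot be obtained by squaring the standard facet bubble: $b_E^2$ still has a jumping normal derivative across $E$ wherever $b_E\neq 0$, so the spurious term $\int_E e\,[A^*\nabla w]_E\cdot n_E$ would survive. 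A construction that does work is a fixed smooth bump rescaled into a ball of radius $\simeq h_E$ centred at the barycentre of $E$; shape regularity places such a ball inside $\omega_E=T\cup T'$, the function is $C^\infty$, vanishes with its gradient on $\partial\omega_E$, equals $1$ on a fixed fraction of $E$, and satisfies $\|\nabla^k\Phi_E\|_{0,\infty}\lesssim h_E^{-k}$ (a piecewise-affine map from a reference patch would only be $C^0$ across $E$, so ``reference-patch construction'' should be read in this ball sense). Third, for variable $A$ the jump $J_E$ is not a polynomial on $E$; since $[\nabla\tilde y_\mathcal{T}]_E$ is a constant multiple of $n_E$, replace $J_E$ by the constant $P_E$ obtained by averaging $A$ over $E$, and use uniform ellipticity to absorb $\|J_E-P_E\|_{0,E}\lesssim h_E\|A\|_{1,\infty,\Omega}\,\|P_E\|_{0,E}$ into the left-hand side; this is the standard coefficient-oscillation technicality. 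With these repairs, your outline is a correct and self-contained proof of the lemma, in the stated generality.
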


As a final preliminary result we show that the $L^2$ norm errors of the control, the state and adjoint state  can be bounded from above by the best approximations of the state and adjoint state variables in finite element space $V_\mathcal{T}$ measured in $L^2$-norm, plus data oscillations, if $\mathcal{T}$ is sufficiently mildly graded. We refer to \cite[Corollary 1]{Demlow} for a similar result for elliptic boundary value problem and \cite{Ciarlet} for the similar C\'ea's lemma in energy norm.
%\begin{Lemma}(\cite[Corollary 1]{Demlow})\label{Lemma:7}
%For sufficiently small $\mu$ and $\mathcal{T}\in \mathbb{T}_\mu$, there holds
%\begin{eqnarray}
%\|Su-S_\mathcal{T}u\|_{0,\Omega}\leq C\Big(\inf\limits_{v_\mathcal{T}\in V_\mathcal{T}}\|Su-v_\mathcal{T}\|_{0,\Omega}+{\rm osc}(u,\mathcal{T})\Big).
%\end{eqnarray}
%\end{Lemma}

\begin{Theorem}\label{Thm:2}
Let $(u,y,p)\in U_{ad}\times H_0^1(\Omega)\times H_0^1(\Omega)$ be the solution of optimal control problem (\ref{OCP})-(\ref{OCP_state}) and $(u_\mathcal{T},y_\mathcal{T},p_\mathcal{T})\in U_{ad}\times V_\mathcal{T}\times V_\mathcal{T}$ be the solution of the discrete problem (\ref{OCP_state_h})-(\ref{OCP_adjoint_h}). Then we have
\begin{eqnarray}
\|u-u_\mathcal{T}\|_{0,\Omega}+\|y-y_\mathcal{T}\|_{0,\Omega}+\|p-p_\mathcal{T}\|_{0,\Omega}
&\lesssim& \inf\limits_{v_\mathcal{T}\in V_\mathcal{T}}\big(\|y-v_\mathcal{T}\|_{0,\Omega}+{\rm osc}(f+u-\mathcal{L}v_\mathcal{T},\mathcal{T})\big)\nonumber\\
&&+\inf\limits_{w_\mathcal{T}\in V_\mathcal{T}}\big(\|p-w_\mathcal{T}\|_{0,\Omega}+{\rm osc}(y-y_d-\mathcal{L}^*w_\mathcal{T},\mathcal{T})\big).\label{cea}
\end{eqnarray}
\end{Theorem}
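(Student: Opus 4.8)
The plan is to reduce the three $L^2$-errors to weighted energy norm errors of the state and adjoint via Lemma \ref{Lemma:11}, then prove a weighted-energy C\'ea inequality for each of the two boundary value subproblems, and finally absorb all coupling terms, which will carry a small factor. First I would invoke Lemma \ref{Lemma:11} to obtain
\[
\|u-u_\mathcal{T}\|_{0,\Omega}+\|y-y_\mathcal{T}\|_{0,\Omega}+\|p-p_\mathcal{T}\|_{0,\Omega}\lesssim \||h_\mathcal{T}(y-y_\mathcal{T})\||+\||h_\mathcal{T}(p-p_\mathcal{T})\||,
\]
so that it suffices to bound the two weighted energy norms by the corresponding infima. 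Passing to the weighted norm is essential: every data-mismatch term below then inherits a factor $h_0+\mu$ and becomes absorbable, whereas in the plain $L^2$-norm these terms would carry $O(1)$ coefficients.

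For the state I would introduce $\hat y:=Su_\mathcal{T}$, so that $y_\mathcal{T}=S_\mathcal{T}u_\mathcal{T}$ is exactly the Galerkin approximation of $\hat y$ for the data $f+u_\mathcal{T}$. Applying the quasi-orthogonality Lemma \ref{Lemma:5} with this data, an arbitrary $v_\mathcal{T}\in V_\mathcal{T}$, and $\epsilon=1$, and discarding the nonnegative term $\||h_\mathcal{T}(y_\mathcal{T}-v_\mathcal{T})\||^2$, gives
\[
\||h_\mathcal{T}(\hat y-y_\mathcal{T})\||^2\lesssim \||h_\mathcal{T}(\hat y-v_\mathcal{T})\||^2+\mu^2\big(\|\hat y-y_\mathcal{T}\|_{0,\Omega}^2+\|\hat y-v_\mathcal{T}\|_{0,\Omega}^2\big),
\]
where I used $\|\nabla h_\mathcal{T}\|_{0,\infty,\Omega}\leq\mu$. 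I would then pass from $\hat y$ to $y$ by the triangle inequality, controlling $\||h_\mathcal{T}(y-\hat y)\||\lesssim (h_0+\mu)\|u-u_\mathcal{T}\|_{0,\Omega}$ exactly as in the proof of Lemma \ref{Lemma:11} (here $\|h_\mathcal{T}\|_{0,\infty,\Omega}\leq h_0$ supplies the small factor), and bounding the residual $L^2$-terms by $\|\hat y-y_\mathcal{T}\|_{0,\Omega}\lesssim \|u-u_\mathcal{T}\|_{0,\Omega}+\|y-y_\mathcal{T}\|_{0,\Omega}$ and $\|\hat y-v_\mathcal{T}\|_{0,\Omega}\lesssim\|u-u_\mathcal{T}\|_{0,\Omega}+\|y-v_\mathcal{T}\|_{0,\Omega}$. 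Taking the infimum over $v_\mathcal{T}$ and invoking Lemma \ref{Lemma:6} with data $f+u$ to replace $\inf_{v_\mathcal{T}}\||h_\mathcal{T}(y-v_\mathcal{T})\||$ by the $L^2$ best approximation plus the oscillation ${\rm osc}(f+u-\mathcal{L}v_\mathcal{T},\mathcal{T})$ then yields a bound on $\||h_\mathcal{T}(y-y_\mathcal{T})\||$ by the first infimum in (\ref{cea}) plus a remainder $(h_0+\mu)(\|u-u_\mathcal{T}\|_{0,\Omega}+\|y-y_\mathcal{T}\|_{0,\Omega})$.

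The adjoint estimate is entirely analogous with $\hat p:=S^*(y_\mathcal{T}-y_d)$, using the adjoint parts of Lemmas \ref{Lemma:5} and \ref{Lemma:6} with data $y-y_d$ and the small bound $\||h_\mathcal{T}(p-\hat p)\||=\||h_\mathcal{T}S^*(y-y_\mathcal{T})\||\lesssim (h_0+\mu)\|y-y_\mathcal{T}\|_{0,\Omega}$; it bounds $\||h_\mathcal{T}(p-p_\mathcal{T})\||$ by the second infimum in (\ref{cea}) plus a remainder $(h_0+\mu)(\|y-y_\mathcal{T}\|_{0,\Omega}+\|p-p_\mathcal{T}\|_{0,\Omega})$. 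Finally I would chain these two bounds with the first display: writing $e$ for the left-hand side of (\ref{cea}), this produces $e\lesssim (\text{the two infima})+(h_0+\mu)\,e$, and for $h_0\ll1$ and $\mu$ sufficiently small the last term is absorbed into the left-hand side, which completes the proof.

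I expect the main obstacle to be the bookkeeping of the coupling terms generated by replacing $u$ by $u_\mathcal{T}$ in the state and $y$ by $y_\mathcal{T}$ in the adjoint: one must check that each such term appears either in the weighted energy norm---hence with the factor $h_0+\mu$---or multiplied by $\mu^2$ through the quasi-orthogonality remainder, so that after the final summation all of them are absorbable. Decoupling the problem into the two independent boundary value problems via the auxiliary solutions $\hat y$ and $\hat p$ is precisely what makes Lemmas \ref{Lemma:5} and \ref{Lemma:6} applicable.
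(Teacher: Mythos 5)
Your proof is correct, but it takes a genuinely different route from the paper's. The paper starts from the a priori error estimate for variational discretization (cited from Hinze et al.), which bounds the left-hand side of (\ref{cea}) directly by the $L^2$-norm Galerkin errors of the \emph{exact} solutions with \emph{exact} data, $\|y-S_\mathcal{T}u\|_{0,\Omega}+\|p-S_\mathcal{T}^*(y-y_d)\|_{0,\Omega}$; it then applies Lemma \ref{Lemma:1}, Lemma \ref{Lemma:5} and Lemma \ref{Lemma:6} with the data $f+u$ and $y-y_d$, and handles the coupling terms by \emph{discrete} stability, e.g.\ $\|y_\mathcal{T}-S_\mathcal{T}u\|_{0,\Omega}=\|S_\mathcal{T}(u_\mathcal{T}-u)\|_{0,\Omega}\lesssim\|u-u_\mathcal{T}\|_{0,\Omega}$, before absorbing the $\mu$-weighted terms. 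You instead start from Lemma \ref{Lemma:11} (the a posteriori-based equivalence), which reduces everything to $\||h_\mathcal{T}(y-y_\mathcal{T})\||+\||h_\mathcal{T}(p-p_\mathcal{T})\||$, and then decouple via the \emph{continuous} solutions with \emph{discrete} data, $\hat y=Su_\mathcal{T}$ and $\hat p=S^*(y_\mathcal{T}-y_d)$, applying Lemma \ref{Lemma:5} with data $f+u_\mathcal{T}$ and $y_\mathcal{T}-y_d$ and controlling the perturbations $\||h_\mathcal{T}(y-\hat y)\||$, $\||h_\mathcal{T}(p-\hat p)\||$ by $(h_0+\mu)$ times $L^2$ errors, exactly as in the proof of Lemma \ref{Lemma:11}. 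So the perturbation direction is opposite: the paper compares discrete objects ($y_\mathcal{T}$ versus $S_\mathcal{T}u$), you compare continuous ones ($y$ versus $Su_\mathcal{T}$). The trade-off is mild but real: the paper's route leans on an external a priori result and needs only $\mu$ sufficiently small, whereas your route stays entirely within the paper's own a posteriori machinery but inherits from Lemma \ref{Lemma:11} the additional hypothesis $h_0\ll 1$, which the theorem as stated does not impose (though it is assumed throughout the rest of the paper). Your bookkeeping of the coupling terms — each carrying a factor $h_0+\mu$ or $\mu$, hence absorbable — is sound, and the final absorption argument is the same in both proofs.
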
 
\begin{proof}
From the standard error estimate for elliptic optimal control problem with variational control discretization (see \cite[Sec. 3, Thm. 3.4]{Hinze09book}) we have 
\begin{eqnarray}
&&\|u-u_\mathcal{T}\|_{0,\Omega}+\|y-y_\mathcal{T}\|_{0,\Omega}+\|p-p_\mathcal{T}\|_{0,\Omega}\nonumber\\
&\lesssim&\|y-S_\mathcal{T}u\|_{0,\Omega}+\|p-S_\mathcal{T}^*(y-y_d)\|_{0,\Omega}.\nonumber
\end{eqnarray}
Recall that $\|\nabla h_\mathcal{T}\|_{0,\infty,\Omega}\leq \mu$. For any $v_\mathcal{T},w_\mathcal{T}\in V_\mathcal{T}$, it follows from Lemmas \ref{Lemma:1}, \ref{Lemma:5} and \ref{Lemma:6}  that
\begin{eqnarray}
&&\|u-u_\mathcal{T}\|_{0,\Omega}+\|y-y_\mathcal{T}\|_{0,\Omega}+\|p-p_\mathcal{T}\|_{0,\Omega}\nonumber\\
%&\lesssim&\|y-S_\mathcal{T}u\|_{0,\Omega}+\|p-S_\mathcal{T}^*(y-y_d)\|_{0,\Omega}\nonumber\\
%&\lesssim& \|Su_\mathcal{T}-y_\mathcal{T}\|_{0,\Omega}+\|S^*(y_\mathcal{T}-y_d)-p_\mathcal{T}\|_{0,\Omega}\nonumber\\
&\lesssim&\||h_\mathcal{T}(y-S_\mathcal{T}u)\||+\||h_\mathcal{T}(p-S_\mathcal{T}^*(y-y_d))\||\nonumber\\
%&\lesssim&|h_\mathcal{T}(Su_\mathcal{T}-y_\mathcal{T})|_{1,\Omega}+|h_\mathcal{T}(S^*(y_\mathcal{T}-y_d)-p_\mathcal{T})|_{1,\Omega}\nonumber\\
&\lesssim&\||h_\mathcal{T}(y-v_\mathcal{T})\||+\mu(\|y-S_\mathcal{T}u\|_{0,\Omega}+\|y-v_\mathcal{T}\|_{0,\Omega})\nonumber\\
&&+\||h_\mathcal{T}(p-w_\mathcal{T})\||+\mu(\|p-S_\mathcal{T}^*(y-y_d)\|_{0,\Omega}+\|p-w_\mathcal{T}\|_{0,\Omega})\nonumber\\
&\lesssim&(1+\mu)\|y-v_\mathcal{T}\|_{0,\Omega}+{\rm osc}(f+u-\mathcal{L}v_\mathcal{T},\mathcal{T})+\mu (\|y-y_\mathcal{T}\|_{0,\Omega}+\|y_\mathcal{T}-S_\mathcal{T}u\|_{0,\Omega})\nonumber\\
&&+(1+\mu)\|p-w_\mathcal{T}\|_{0,\Omega}+{\rm osc}(y-y_d-\mathcal{L}^*w_\mathcal{T},\mathcal{T})+\mu (\|p-p_\mathcal{T}\|_{0,\Omega}+\|p_\mathcal{T}-S_\mathcal{T}^*(y-y_d)\|_{0,\Omega})\nonumber\\
&\lesssim&(1+\mu)\|y- v_\mathcal{T}\|_{0,\Omega}+{\rm osc}(f+u-\mathcal{L}v_\mathcal{T},\mathcal{T})+\mu (\|y-y_\mathcal{T}\|_{0,\Omega}+\|u-u_\mathcal{T}\|_{0,\Omega})\nonumber\\
&&+(1+\mu)\|p-w_\mathcal{T}\|_{0,\Omega}+{\rm osc}(y-y_d-\mathcal{L}^*w_\mathcal{T},\mathcal{T})+\mu (\|p-p_\mathcal{T}\|_{0,\Omega}+\|y-y_\mathcal{T}\|_{0,\Omega}),\label{cea_1}
\end{eqnarray}
where we used the discrete stability of elliptic equation in the last estimate. By taking $\mu$ sufficiently small and $v_\mathcal{T},w_\mathcal{T}$ arbitrary we complete the proof.
\end{proof}

\begin{Remark}\label{Rem:1}
Compared to Theorem \ref{Thm:2} we have an alternative result: there exists a constant $C_5$ independent of the mesh size such that
\begin{eqnarray}
&&\|u-u_\mathcal{T}\|_{0,\Omega}+\|y-y_\mathcal{T}\|_{0,\Omega}+\|p-p_\mathcal{T}\|_{0,\Omega}+{\rm osc}_\mathcal{T}\nonumber\\
&\leq& C_5\Big(\inf\limits_{v_\mathcal{T}\in V_\mathcal{T}}\big(\|y-v_\mathcal{T}\|_{0,\Omega}+{\rm osc}(v_\mathcal{T}-y_d,\mathcal{T})+{\rm osc}(\mathcal{L}v_\mathcal{T},\mathcal{T})\big)\nonumber\\
&&+\inf\limits_{w_\mathcal{T}\in V_\mathcal{T}}\big(\|p-w_\mathcal{T}\|_{0,\Omega}+{\rm osc}(f+P_{[a,b]}(w_\mathcal{T}),\mathcal{T})+{\rm osc}(\mathcal{L}^*w_\mathcal{T},\mathcal{T})\big)\Big)\label{cea_2}
\end{eqnarray}
provided that $h_0\ll 1$. In fact, we can conclude from the triangle inequality that
\begin{eqnarray}
{\rm osc}(f+u,\mathcal{T})+{\rm osc}(y-y_d,\mathcal{T})&\lesssim& {\rm osc}(f+P_{[a,b]}(w_\mathcal{T}),\mathcal{T})+{\rm osc}(v_\mathcal{T}-y_d,\mathcal{T})\nonumber\\
&&+\|h_\mathcal{T}^2(u-P_{[a,b]}(w_\mathcal{T}))\|_{0,\Omega}+\|h_\mathcal{T}^2(y-v_\mathcal{T})\|_{0,\Omega}.\nonumber
\end{eqnarray}
From the Lipschitz property of the projection operator $P_{[a,b]}$ we have 
\begin{eqnarray}
\|u-P_{[a,b]}(w_\mathcal{T})\|_{0,\Omega}\lesssim \|p-w_\mathcal{T}\|_{0,\Omega}.\nonumber
\end{eqnarray}
Now it remains to estimate ${\rm osc}_\mathcal{T}$. It follows from the  inverse inequality that
\begin{eqnarray}
{\rm osc}(\mathcal{L}(y_\mathcal{T}-v_\mathcal{T}),T)+{\rm osc}(\mathcal{L}^*(p_\mathcal{T}-w_\mathcal{T}),T)\lesssim h_T(\|y_\mathcal{T}-v_\mathcal{T}\|_{0,T}+\|p_\mathcal{T}-w_\mathcal{T}\|_{0,T}).\nonumber
\end{eqnarray}
Therefore,
\begin{eqnarray}
&&{\rm osc}(f+u_\mathcal{T}-\mathcal{L}y_\mathcal{T},\mathcal{T})+{\rm osc}(y_\mathcal{T}-y_d-\mathcal{L}^*p_\mathcal{T},\mathcal{T})\nonumber\\
&\lesssim&{\rm osc}(f+P_{[a,b]}(w_\mathcal{T}),\mathcal{T})+{\rm osc}(v_\mathcal{T}-y_d,\mathcal{T})+\|h_\mathcal{T}^2(u_\mathcal{T}-P_{[a,b]}(w_\mathcal{T}))\|_{0,\Omega}+\|h_\mathcal{T}^2(y_\mathcal{T}-v_\mathcal{T})\|_{0,\Omega}\nonumber\\
&&+{\rm osc}(\mathcal{L}v_\mathcal{T},\mathcal{T})+{\rm osc}(\mathcal{L}^*w_\mathcal{T},\mathcal{T})+{\rm osc}(\mathcal{L}(y_\mathcal{T}-v_\mathcal{T}),\mathcal{T})+{\rm osc}(\mathcal{L}^*(p_\mathcal{T}-w_\mathcal{T}),\mathcal{T})\nonumber\\
&\lesssim&{\rm osc}(f+P_{[a,b]}(w_\mathcal{T}),\mathcal{T})+{\rm osc}(v_\mathcal{T}-y_d,\mathcal{T})+{\rm osc}(\mathcal{L}v_\mathcal{T},\mathcal{T})+{\rm osc}(\mathcal{L}^*w_\mathcal{T},\mathcal{T})\nonumber\\
&&+\|p-w_\mathcal{T}\|_{0,\Omega}+\|y-v_\mathcal{T}\|_{0,\Omega}+\|h_\mathcal{T}(p-p_\mathcal{T})\|_{0,\Omega}+\|h_\mathcal{T}(y-y_\mathcal{T})\|_{0,\Omega}.\nonumber
\end{eqnarray}
Combining the above results, the fact that $h_\mathcal{T}\leq h_0$ and using (\ref{cea}) we can conclude (\ref{cea_2}) provided that $h_0\ll 1$ and $\mu$ sufficiently small.
\end{Remark}

%%%%%%%%%%%%%%%%%%%%%%%%%%%%%%%%%%%%%%%%%%%%%%%%
\subsection{Convergence analysis of AFEM for OCPs in $L^2$}
In this subsection we will prove the convergence of $L^2$-norm based AFEM for solving optimal control problems. In the following we assume that $(u,y,p)\in  U_{ad}\times H_0^1(\Omega)\times H_0^1(\Omega)$ is the solution of optimal control problem (\ref{OCP})-(\ref{OCP_state}) and $(u_k,y_k,p_k) \in U_{ad}\times  V_k\times V_k$ is the solution of the discrete problem (\ref{OCP_state_h})-(\ref{OCP_adjoint_h}) generated by the adaptive Algorithm \ref{Alg:3.1}. 

At first we prove some quasi-orthogonality properties.
\begin{Lemma}\label{Lemma:9}
For any $\epsilon>0$ there hold
\begin{eqnarray}
%&&\|u-u_{k+1}\|_{0,\Omega}^2+\|u_{k+1}-u_{k}\|_{0,\Omega}^2-(1+\epsilon)\|u-u_{k}\|_{0,\Omega}^2\lesssim \epsilon^{-1}\eta^2_k(\mathcal{T}_k),\label{quasi_u}\\
&&\||h_{k+1}(y-y_{k+1})\||^2+\||h_{k+1}(y_{k+1}-y_{k})\||^2-(1+\epsilon)\||h_{k}(y-y_{k})\||^2\nonumber\\
&\lesssim& \epsilon^{-1}(\mu^2+h_0^2)(\|u-u_{k+1}\|_{0,\Omega}^2+\|y-y_{k+1}\|_{0,\Omega}^2+\| y-y_{k}\|_{0,\Omega}^2),\label{quasi_y}\\
&&\||h_{k+1}(p-p_{k+1})\||^2+\||h_{k+1}(p_{k+1}-p_{k})\||^2-(1+\epsilon)\||h_{k}(p-p_{k})\||^2\nonumber\\
&\lesssim& \epsilon^{-1}(\mu^2+h_0^2)(\|y-y_{k+1}\|_{0,\Omega}^2+\|p-p_{k+1}\|_{0,\Omega}^2+\| p-p_{k}\|_{0,\Omega}^2).\label{quasi_p}
\end{eqnarray}
\end{Lemma}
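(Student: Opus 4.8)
The plan is to reduce the estimate to the single‑source quasi‑orthogonality of Lemma \ref{Lemma:5}, the only essential novelty over \cite{Demlow} being that the source of the state equation changes along the adaptive loop (from $u_k$ to $u_{k+1}$ to the continuous $u$). First I would freeze the source at $u_{k+1}$ and apply the first inequality of Lemma \ref{Lemma:5} on the fine mesh $\mathcal{T}_{k+1}$ with $f=u_{k+1}$ and the (arbitrary) comparison function $\tilde y_\mathcal{T}=y_k\in V_k\subset V_{k+1}$. Since $S_{k+1}u_{k+1}=y_{k+1}$, this gives, for a parameter $\epsilon_0$ to be fixed later,
\begin{align*}
&\||h_{k+1}(Su_{k+1}-y_{k+1})\||^2+\||h_{k+1}(y_{k+1}-y_{k})\||^2-(1+\epsilon_0)\||h_{k+1}(Su_{k+1}-y_{k})\||^2\\
&\qquad\le \epsilon_0^{-1}\mu^2\big(\|Su_{k+1}-y_{k+1}\|_{0,\Omega}^2+\|Su_{k+1}-y_{k}\|_{0,\Omega}^2\big).
\end{align*}
The right–hand side is already of the desired type: by stability of $S$ and the triangle inequality $\|Su_{k+1}-y_{k+1}\|_{0,\Omega}\lesssim \|u-u_{k+1}\|_{0,\Omega}+\|y-y_{k+1}\|_{0,\Omega}$ and $\|Su_{k+1}-y_{k}\|_{0,\Omega}\lesssim \|u-u_{k+1}\|_{0,\Omega}+\|y-y_{k}\|_{0,\Omega}$, so it is controlled by $\epsilon_0^{-1}\mu^2$ times the $L^2$ quantities in (\ref{quasi_y}).

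It then remains to pass from $Su_{k+1}$ to $y=Su$ in the first and third terms and, in the third term only, from the weight $h_{k+1}$ to $h_k$. For the source change I would use the estimate already established inside the proof of Lemma \ref{Lemma:11}, namely $\||h_{k+1}(y-Su_{k+1})\||=\||h_{k+1}S(u-u_{k+1})\||\lesssim (h_0+\mu)\|u-u_{k+1}\|_{0,\Omega}$. Writing $y-y_{k+1}=(Su_{k+1}-y_{k+1})+(y-Su_{k+1})$ and $Su_{k+1}-y_k=(y-y_k)+(Su_{k+1}-y)$ and invoking the elementary inequality $\||a+b\||^2\le (1+\lambda)\||a\||^2+(1+\lambda^{-1})\||b\||^2$, the contributions of $y-Su_{k+1}$ produce only terms bounded by $(1+\lambda^{-1})(h_0+\mu)^2\|u-u_{k+1}\|_{0,\Omega}^2$, which fit the right–hand side of (\ref{quasi_y}).

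For the weight change in the (favourable, negative) third term I would expand $\||h_\mathcal{T}w\||^2=\int_\Omega h_\mathcal{T}^2\,(A\nabla w\cdot\nabla w+a_0w^2)+2\int_\Omega h_\mathcal{T}w\,A\nabla w\cdot\nabla h_\mathcal{T}+\int_\Omega w^2\,A\nabla h_\mathcal{T}\cdot\nabla h_\mathcal{T}$ with $w=y-y_k$. Because $\mathcal{T}_{k+1}$ refines $\mathcal{T}_k$ one has $h_{k+1}\le h_k$ pointwise, so the leading monotone term decreases and $\||h_{k+1}(y-y_k)\||^2\le \||h_k(y-y_k)\||^2+\mathrm{corr}$, where the correction collects the cross and gradient–of–$h$ terms and is bounded, using $\|\nabla h_\mathcal{T}\|_{0,\infty,\Omega}\le\mu$ and Young's inequality, by $\lambda\,\||h_k(y-y_k)\||^2+C\lambda^{-1}\mu^2\|y-y_k\|_{0,\Omega}^2$. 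Collecting the three reconciliations, the coefficient of $\||h_k(y-y_k)\||^2$ arising from the negative term has the form $(1+\epsilon_0)(1+C\lambda+C\mu)$; choosing first $\lambda$ comparable to $\epsilon$ and then $\epsilon_0=\epsilon/(2+\epsilon)$ so that this coefficient does not exceed $1+\epsilon$, one drops the resulting nonpositive multiple of $\||h_k(y-y_k)\||^2$ and obtains (\ref{quasi_y}), all remaining terms being absorbed into $\epsilon^{-1}(\mu^2+h_0^2)$ times the stated $L^2$ norms (note $\epsilon_0^{-1}\lesssim\epsilon^{-1}$). The inequality (\ref{quasi_p}) is proved identically, starting from the second inequality of Lemma \ref{Lemma:5} with $g=y_{k+1}-y_d$ and $\tilde p_\mathcal{T}=p_k$, and using $\||h_{k+1}S^*(y-y_{k+1})\||\lesssim (h_0+\mu)\|y-y_{k+1}\|_{0,\Omega}$ for the source change.

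The delicate point, I expect, will be the simultaneous bookkeeping of these small perturbations while keeping the constant in front of $\||h_k(y-y_k)\||^2$ exactly $1+\epsilon$: both the source change and the gradient–of–$h$ terms generate spurious multiples of $\||h_k(y-y_k)\||^2$, and these must be reabsorbed into the negative term rather than into the $L^2$ right–hand side, which is possible only because $\mu$ and $h_0$ are assumed small and the parameter $\epsilon_0$ in Lemma \ref{Lemma:5} may be taken strictly smaller than $\epsilon$.
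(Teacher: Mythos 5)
Your proposal is correct in substance, but it takes a genuinely different route from the paper's own proof. The paper proves (\ref{quasi_y}) directly: it starts from the identity $\||h_{k+1}(y-y_{k+1})\||^2=\||h_{k+1}(y-y_{k})\||^2-\||h_{k+1}(y_{k+1}-y_{k})\||^2-2a(h_{k+1}(y-y_{k+1}),h_{k+1}(y_{k+1}-y_{k}))$ and estimates the cross term by hand, splitting $y-y_{k+1}$ into the genuine Galerkin error $Su_{k+1}-y_{k+1}$ (treated by Galerkin orthogonality plus an interpolation estimate for $h_{k+1}^2(y_{k+1}-y_k)$, i.e.\ it re-runs the computation behind Demlow's lemma) and the source perturbation $y-Su_{k+1}$ (treated via the weak form, which produces the term $(u-u_{k+1},h_{k+1}^2(y_{k+1}-y_k))$). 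You instead invoke Lemma \ref{Lemma:5} as a black box on $\mathcal{T}_{k+1}$ with frozen source $f+u_{k+1}$ and comparison function $y_k\in V_k\subset V_{k+1}$, and then reconcile $Su_{k+1}$ with $y$ through the perturbation bound $\||h_{k+1}(y-Su_{k+1})\||\lesssim (h_0+\mu)\|u-u_{k+1}\|_{0,\Omega}$ taken from the proof of Lemma \ref{Lemma:11}; both arguments share the final weight change $h_{k+1}\to h_k$. Your route is more modular (no repetition of the orthogonality/interpolation computation), at the price of three nested applications of Young's inequality whose factors must multiply to at most $1+\epsilon$; this indeed works, since $(1+\lambda)^3(1+\epsilon_0)\le 1+\epsilon$ with $\lambda,\epsilon_0\simeq\epsilon$, and all the factors $(1+\lambda^{-1})$, $\epsilon_0^{-1}$ are of order $\epsilon^{-1}$. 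One remark on your bookkeeping: the term $C\mu$ you place in the coefficient of $\||h_k(y-y_k)\||^2$ is avoidable --- every $\mu$-contribution (the cross terms in the weight change and the source perturbation) can be pushed by Young's inequality onto the $\epsilon^{-1}\mu^2$, $\epsilon^{-1}h_0^2$ right-hand side, so no smallness of $\mu$ relative to $\epsilon$ is actually required; as written, your version carries a hidden constraint $\mu\lesssim\epsilon$, which is harmless where the lemma is used (in Theorem \ref{Thm:3} the parameter $\epsilon$ is fixed before $\mu$) but is not part of the statement. Finally, your slight loss in the leading coefficient is of the same nature as the paper's own, whose proof concludes with ``$(1+\epsilon)$ replaced by $(1+\epsilon)^2$''.
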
 
\begin{proof}

At first we estimate (\ref{quasi_y}). We use the abbreviations $e_k=y-y_{k}$ and $\tilde e_k=y_{k+1}-y_{k}$, then we have
\begin{eqnarray}
\||h_{k+1}e_{k+1}\||^2=\||h_{k+1}e_{k}\||^2-\||h_{k+1}\tilde e_{k}\||^2-2a( h_{k+1}e_{k+1},h_{k+1}\tilde e_{k}).\label{y_split}
\end{eqnarray}
An elementary calculation gives
\begin{eqnarray}
&&|(A\nabla (h_{k+1}e_{k+1}),\nabla (h_{k+1}\tilde e_{k}))+(a_0h_{k+1}e_{k+1},h_{k+1}\tilde e_{k})|\nonumber\\
&=&|(A\nabla e_{k+1},\nabla (h_{k+1}^2\tilde e_{k}))+(A|\nabla h_{k+1}|^2e_{k+1},\tilde e_{k})\nonumber\\
&&-(Ae_k\nabla h_{k+1},\nabla (h_{k+1}e_{k+1}))+(Ae_{k+1}\nabla h_{k+1},\nabla(h_{k+1}e_k))+(a_0h_{k+1}e_{k+1},h_{k+1}\tilde e_{k})|\nonumber\\
&\lesssim&\|e_k\nabla h_{k+1}\|_{0,\Omega}\||h_{k+1}e_{k+1}\||+\|e_{k+1}\nabla h_{k+1}\|_{0,\Omega}\||h_{k+1}e_k\||\nonumber\\
&&+\|e_{k+1}\nabla h_{k+1}\|_{0,\Omega}\|\tilde e_k\nabla h_{k+1}\|_{0,\Omega}+|(A\nabla e_{k+1},\nabla (h_{k+1}^2\tilde e_{k}))+(a_0e_{k+1},h_{k+1}^2\tilde e_{k})|.\label{orth_y_1}
\end{eqnarray}
It remains to estimate $|(A\nabla e_{k+1},\nabla (h_{k+1}^2\tilde e_{k}))+(a_0e_{k+1},h_{k+1}^2\tilde e_{k})|$. We divide the estimate into two steps. Firstly, from the orthogonality property we have
\begin{eqnarray}
&&|(A\nabla (Su_{k+1}-y_{k+1}),\nabla (h_{k+1}^2\tilde e_{k}))+(a_0(Su_{k+1}-y_{k+1}),h_{k+1}^2\tilde e_{k})|\nonumber\\
&=&|(A\nabla (Su_{k+1}-y_{k+1}),\nabla (h_{k+1}^2\tilde e_{k}-\Pi_{k+1}(h_{k+1}^2\tilde e_{k})))\nonumber\\
&&+(a_0(Su_{k+1}-y_{k+1}),h_{k+1}^2\tilde e_{k}-\Pi_{k+1}(h_{k+1}^2\tilde e_{k}))|\nonumber\\
&\leq& \sum\limits_{T\in \mathcal{T}_{k+1}}\||Su_{k+1}-y_{k+1}\||_{T}\||h_{k+1}^2\tilde e_{k}-\Pi_{k+1}(h_{k+1}^2\tilde e_{k}))\||_{T}.\label{orth_y_2}
\end{eqnarray}
For each $T\in \mathcal{T}_{k+1}$ we know that $h_{k+1}|_{T}$ and $\tilde e_{k}$ are linear. So $|\nabla^2h_{k+1}^2|\lesssim |\nabla h_{k+1}|^2$, $\nabla^2 h_{k+1}=0$ and $\nabla^2\tilde e_k=0$. By using the inverse inequality, the standard interpolation error estimate (\cite{Ciarlet}), the fact that $\|\nabla h_{k+1}\|_{0,\infty,\Omega}\lesssim 1$ and (\ref{shape_regular}) we can derive
\begin{eqnarray}
&&\||h_{k+1}^2\tilde e_{k}-\Pi_{k+1}(h_{k+1}^2\tilde e_{k}))\||_{T}\nonumber\\
&\lesssim& h_T\|\nabla^2(h_{k+1}^2\tilde e_{k})\|_{0,T}\nonumber\\
&\lesssim& h_T(\|\nabla h_{k+1}^2\nabla \tilde e_{k})\|_{0,T}+\|\nabla^2 h_{k+1}^2\tilde e_{k})\|_{0,T})\nonumber\\
&\lesssim& h_T\|\nabla h_{k+1}\|_{0,\infty,T}(\|h_{k+1}\|_{0,\infty,T}\|\nabla \tilde e_{k}\|_{0,T}+\|\nabla h_{k+1}\|_{0,\infty,T}\|\tilde e_{k}\|_{0,T})\nonumber\\
&\lesssim&\|\nabla h_{k+1}\|_{0,\infty,T}\|\tilde e_{k}\|_{0,T}(\|h_{k+1}\|_{0,\infty,T}+h_T\|\nabla h_{k+1}\|_{0,\infty,T})\nonumber\\
&\lesssim&\|\nabla h_{k+1}\|_{0,\infty,T}\|h_{k+1}\|_{0,\infty,T}\|\tilde e_{k}\|_{0,T}.\label{orth_y_3}
\end{eqnarray}
From (\ref{orth_y_2}),  (\ref{orth_y_3}) and the stability of elliptic equation we conclude that
\begin{eqnarray}
&&|(A\nabla (Su_{k+1}-y_{k+1}),\nabla (h_{k+1}^2\tilde e_{k}))+(a_0(Su_{k+1}-y_{k+1}),h_{k+1}^2\tilde e_{k})|\nonumber\\
&\lesssim& \sum\limits_{T\in \mathcal{T}_{k+1}}\||Su_{k+1}-y_{k+1})\||_{T}\|\nabla h_{k+1}\|_{0,\infty,T}\|h_{k+1}\|_{0,\infty,T}\|\tilde e_{k}\|_{0,T}\nonumber\\
&\lesssim& (\|h_{k+1}(\nabla e_{k+1}+e_{k+1})\|_{0,\Omega}+\|h_{k+1}(\nabla (y-Su_{k+1})+(y-Su_{k+1}))\|_{0,\Omega})\nonumber\\
&&\|\nabla h_{k+1}\|_{0,\infty,\Omega}\|\tilde e_{k}\|_{0,\Omega}\nonumber\\
&\lesssim&( \||h_{k+1}e_{k+1}\||+\|\nabla h_{k+1}e_{k+1}\|_{0,\Omega}+\|h_{k+1}\|_{0,\infty,\Omega}\|u-u_{k+1}\|_{0,\Omega})\|\nabla h_{k+1}\|_{0,\infty,\Omega}\|\tilde e_{k}\|_{0,\Omega}.\label{orth_y_6}
%&\leq&\epsilon \|h_{k+1}\nabla e_{k+1}\|_{0,\Omega}^2+\frac{C\mu^2}{4\epsilon}\|\tilde e_{k}\|_{0,\Omega}^2.
\end{eqnarray}
Secondly, from (\ref{OCP_state_weak}) we deduce
\begin{eqnarray}
&&|(A\nabla (y-Su_{k+1}),\nabla (h_{k+1}^2\tilde e_{k}))+(a_0 (y-Su_{k+1}),h_{k+1}^2\tilde e_{k})|\nonumber\\
&=& |(u-u_{k+1},h_{k+1}^2\tilde e_{k})|\nonumber\\
&\leq&\|h_{k+1}\|_{0,\infty,\Omega}^2\|u-u_{k+1}\|_{0,\Omega}\|\tilde e_{k}\|_{0,\Omega}.\label{orth_y_7}
\end{eqnarray}
Inserting the above estimates into (\ref{orth_y_1}) and using Young's inequality we are led to 
\begin{eqnarray}
&&|(A\nabla (h_{k+1}e_{k+1}),\nabla (h_{k+1}\tilde e_{k}))+(a_0h_{k+1}e_{k+1},h_{k+1}\tilde e_{k})|\nonumber\\
&\leq& {\delta \over 2}(\||h_{k+1}e_{k+1}\||^2+\||h_{k+1} e_{k}\||^2)+\|h_{k+1}\|_{0,\infty,\Omega}^2\|u-u_{k+1}\|_{0,\Omega}^2\nonumber\\
&&+C(1+{1\over \delta})(\|\nabla h_{k+1}\|_{0,\infty,\Omega}^2+\|h_{k+1}\|_{0,\infty,\Omega}^2)(\|e_{k+1}\|_{0,\Omega}^2+\| e_{k}\|_{0,\Omega}^2).\label{orth_y_4}
\end{eqnarray}
Combining (\ref{y_split}) and (\ref{orth_y_4}) yields
\begin{eqnarray}
(1-\delta)\||h_{k+1}e_{k+1}\||^2&\leq& (1+\delta)\||h_{k+1}e_{k}\||^2-\||h_{k+1}\tilde e_{k}\||^2+2\|h_{k+1}\|_{0,\infty,\Omega}^2\|u-u_{k+1}\|_{0,\Omega}^2\nonumber\\
&&+2C(1+{1\over \delta})(\|\nabla h_{k+1}\|_{0,\infty,\Omega}^2+\|h_{k+1}\|_{0,\infty,\Omega}^2)(\|e_{k+1}\|_{0,\Omega}^2+\| e_{k}\|_{0,\Omega}^2).
\end{eqnarray}
Dividing both sides by $1-\delta$ and choosing $\frac{1+\delta}{1-\delta}$ as $1+\epsilon$ we arrive at
\begin{eqnarray}
\||h_{k+1}e_{k+1}\||^2&\lesssim&(1+\epsilon)\||h_{k+1}e_{k}\||^2-\||h_{k+1}\tilde e_{k}\||^2+\|h_{k+1}\|_{0,\infty,\Omega}^2\|u-u_{k+1}\|_{0,\Omega}^2\nonumber\\
&&+\epsilon^{-1}(\|\nabla h_{k+1}\|_{0,\infty,\Omega}^2+\|h_{k+1}\|_{0,\infty,\Omega}^2)(\|e_{k+1}\|_{0,\Omega}^2+\| e_{k}\|_{0,\Omega}^2).\label{orth_y_5}
\end{eqnarray}
Note that $h_{k+1}\leq h_k$ and $\|\nabla h_{k+1}\|_{0,\infty,\Omega}\leq \mu$ we have
\begin{eqnarray}
\||h_{k+1}e_{k}\||&\lesssim& \|h_{k+1}\nabla e_k\|_{0,\Omega}+\|\nabla h_{k+1}\|_{0,\infty,\Omega}\|e_k\|_{0,\Omega}+ \|h_{k+1} e_k\|_{0,\Omega}\nonumber\\
&\lesssim &\|h_{k}\nabla e_k\|_{0,\Omega}+ \|h_{k} e_k\|_{0,\Omega}+\mu\|e_k\|_{0,\Omega}\nonumber\\
&\lesssim &\||h_{k}e_k\||+2\mu\|e_k\|_{0,\Omega},
\end{eqnarray}
so for any $\epsilon >0$ we have
\begin{eqnarray}
\||h_{k+1}e_{k}\||^2\lesssim (1+\epsilon)\||h_{k}e_k\||^2+(1+{1\over\epsilon})4\mu^2\|e_k\|_{0,\Omega}^2,
\end{eqnarray}
substituting the above result into (\ref{orth_y_5}) and using $\|h_{k+1}\|_{0,\infty,\Omega}\leq h_0$ we complete the proof of (\ref{quasi_y}) with $(1+\epsilon)$ replaced by $(1+\epsilon)^2$ which are equivalent. The proof of (\ref{quasi_p}) is very similar and we omit it here.
\end{proof}

We also need the following estimator reduction property, the proof is very similar to \cite[Corollary]{Cascon}.
\begin{Lemma}\label{Lemma:10}
For any $\delta \in (0,1]$ there hold
\begin{eqnarray}
&&\eta_{k+1,y}^2(u_{k+1},y_{k+1},\mathcal{T}_{k+1})-(1+\delta)\Big(\eta_{k,y}^2(u_{k},y_{k},\mathcal{T}_{k})-\lambda \eta_{k,y}^2(u_{k},y_{k},\mathcal{M}_{k})\Big)\nonumber\\
&\leq&\delta^{-1}\Big(\||h_{k+1}(y_{k+1}-y_{k})\||^2+\mu^2\|y_{k+1}-y_{k}\|_{0,\Omega}^2+h_0^4\|u_{k+1}-u_k\|^2_{0,\Omega}\Big),\label{err_red_y}\\
&&\eta_{i+1,p}^2(y_{k+1},p_{k+1},\mathcal{T}_{k+1})-(1+\delta)\Big(\eta_{k,p}^2(y_{k},p_{k},\mathcal{T}_{k})-\lambda\eta_{k,p}^2(y_{k},p_{k},\mathcal{M}_{k})\Big)\nonumber\\
&\leq&\delta^{-1}\Big(\||h_{k+1}(p_{k+1}-p_{k})\||^2+\mu^2\|p_{k+1}-p_{k}\|_{0,\Omega}^2+h_0^4\|y_{k+1}-y_k\|^2_{0,\Omega}\Big),\label{err_red_p}
\end{eqnarray}
where $\lambda = 1-2^{-{{3r}\over d}}$ and $r$ is the number of bisections for marked elements in $\mathcal{M}_{k}$ during the local mesh refinement. 
\end{Lemma}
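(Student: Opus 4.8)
The plan is to follow the standard estimator reduction argument of \cite{Cascon} (Corollary~3.4), adapting it to the $L^2$-type indicators \eqref{local_state}--\eqref{local_adjoint} and to the graded meshes governed by \eqref{mesh_grading}. Two ingredients drive the proof: the Lipschitz perturbation estimate of Lemma \ref{Lemma:3}, which measures how $\eta_{k+1,y}$ changes when its discrete arguments are frozen, and the geometric decay of the mesh function on refined elements, which produces the factor $2^{-3r/d}=1-\lambda$. The two contributions are then assembled with Young's inequality $(a+b)^2\le(1+\delta)a^2+(1+\delta^{-1})b^2$, with $\delta$ the free parameter of the statement.

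First I would freeze the discrete arguments. Applying Lemma \ref{Lemma:3} on each $T\in\mathcal T_{k+1}$ with $(u_\mathcal{T},y_\mathcal{T})=(u_{k+1},y_{k+1})$ and $(\tilde u_\mathcal{T},\tilde y_\mathcal{T})=(u_k,y_k)$, then squaring with Young's inequality, gives
\begin{eqnarray}
\eta_{k+1,y}^2(u_{k+1},y_{k+1},T)&\le&(1+\delta)\,\eta_{k+1,y}^2(u_k,y_k,T)\nonumber\\
&&+C(1+\delta^{-1})\big(\||h_{k+1}(y_{k+1}-y_k)\||_{\tilde{\omega}_T}^2+h_T^4\|u_{k+1}-u_k\|_{0,T}^2\nonumber\\
&&+\|\nabla h_{k+1}\|_{0,\infty,\tilde{\omega}_T}^2\|y_{k+1}-y_k\|_{0,\tilde{\omega}_T}^2\big).\nonumber
\end{eqnarray}
Summing over $T\in\mathcal T_{k+1}$ and using the finite overlap of the patches $\tilde{\omega}_T$, together with $\|\nabla h_{k+1}\|_{0,\infty,\Omega}\le\mu$ from \eqref{mesh_grading} and $h_T\le h_0$, collapses the perturbation contribution into $\delta^{-1}\big(\||h_{k+1}(y_{k+1}-y_k)\||^2+\mu^2\|y_{k+1}-y_k\|_{0,\Omega}^2+h_0^4\|u_{k+1}-u_k\|_{0,\Omega}^2\big)$, which is exactly the right-hand side of \eqref{err_red_y}.

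It then remains to bound the frozen quantity $\eta_{k+1,y}^2(u_k,y_k,\mathcal T_{k+1})$ by $\eta_{k,y}^2(u_k,y_k,\mathcal T_k)-\lambda\,\eta_{k,y}^2(u_k,y_k,\mathcal M_k)$. Here I would split $\mathcal T_{k+1}$ into elements inherited unchanged from $\mathcal T_k$, on which the indicator is unchanged, and elements obtained by refining an element of $\mathcal T_k$. On a child $T'$ of a refined element $T$, the crucial observation is that $y_k$ is a single linear polynomial on $T$ and $A\in W^{1,\infty}$ is continuous, so $A\nabla y_k$ carries no jump across any edge interior to $T$; only the volume residual and the jumps on subdivided ancestor edges of $\partial T$ survive. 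Since a marked element bisected $r$ times produces children with $h_{T'}\le 2^{-r/d}h_T$, the volume term scales by $2^{-4r/d}$ and the edge term by $2^{-3r/d}$, so the larger edge factor dominates and $\sum_{T'\subset T}\eta_{k+1,y}^2(u_k,y_k,T')\le 2^{-3r/d}\eta_{k,y}^2(u_k,y_k,T)$. Summing, the elements of $\mathcal M_k$ contribute with factor $2^{-3r/d}=1-\lambda$, any extra elements refined for conformity and grading only decrease their indicator, and the unrefined elements are untouched; this yields precisely $\eta_{k,y}^2(u_k,y_k,\mathcal T_k)-\lambda\,\eta_{k,y}^2(u_k,y_k,\mathcal M_k)$.

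Combining the two steps gives \eqref{err_red_y}, and \eqref{err_red_p} follows identically once the role of the control coupling is replaced by the state coupling: freezing $(y_{k+1},p_{k+1})$ to $(y_k,p_k)$ in the adjoint indicator \eqref{local_adjoint} via Lemma \ref{Lemma:3} produces the term $h_0^4\|y_{k+1}-y_k\|_{0,\Omega}^2$ in place of $h_0^4\|u_{k+1}-u_k\|_{0,\Omega}^2$. I expect the main obstacle to be the frozen-indicator reduction step: one must verify carefully that the newly created interior edges contribute nothing, so that only the dominant $2^{-3r/d}$ scaling of the ancestor edges enters, and that the additional refinements forced by the grading requirement \eqref{mesh_grading} cannot spoil the one-sided bound. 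The Young-inequality bookkeeping and the patch-overlap summation are routine once Lemma \ref{Lemma:3} is in hand.
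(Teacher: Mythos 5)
Your proposal is correct and follows essentially the same route as the paper's proof: freeze the discrete arguments via the perturbation estimate of Lemma \ref{Lemma:3} combined with Young's inequality, sum over $\mathcal{T}_{k+1}$ using the mesh grading bound $\|\nabla h_{k+1}\|_{0,\infty,\Omega}\leq\mu$ and $h_T\leq h_0$, and then establish the frozen-indicator reduction $\eta_{k+1,y}^2(u_k,y_k,\mathcal{T}_{k+1})\leq\eta_{k,y}^2(u_k,y_k,\mathcal{T}_k)-\lambda\eta_{k,y}^2(u_k,y_k,\mathcal{M}_k)$ from the scaling $h_T\leq 2^{-r/d}h_{T'}$, the vanishing of $[A\nabla y_k]$ on edges interior to a refined element, and monotonicity on unmarked elements. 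Your additional justification that the edge factor $2^{-3r/d}$ dominates the volume factor $2^{-4r/d}$ is exactly the reasoning implicit in the paper's claim.
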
 
\begin{proof}
It follows from (\ref{estimatory_pert}) and Young's inequality that for any $\delta \in (0,1]$ and $T\in \mathcal{T}_{k+1}$ there holds
\begin{eqnarray}
&&\eta_{k+1,y}^2(u_{k+1},y_{k+1},T)-(1+\delta)\eta_{k+1,y}^2(u_{k},y_{k},T)\nonumber\\
&\lesssim& \delta^{-1}\Big(\||h_{k+1}(y_{k+1}-y_{k})\||_{\tilde{\omega}_T}^2+\|\nabla h_{k+1}\|_{0,\infty,\tilde{\omega}_T}^2\|y_{k+1}-y_{k}\|_{0,\tilde{\omega}_T}^2+h_{T}^4\|u_{k+1}-u_k\|^2_{0,\tilde{\omega}_T}\Big).\nonumber
\end{eqnarray}
Summing over $T\in \mathcal{T}_{k+1}$ and using the fact that the triangulation is shape regular, (\ref{mesh_grading}) and $h_{k+1}\leq h_0$, we have
\begin{eqnarray}
&&\eta_{k+1,y}^2(u_{k+1},y_{k+1},\mathcal{T}_{k+1})-(1+\delta)\eta_{k+1,y}^2(u_{k},y_{k},\mathcal{T}_{k+1})\nonumber\\
&\lesssim& \delta^{-1}\Big(\||h_{k+1}(y_{k+1}-y_{k})\||^2+\mu^2\|y_{k+1}-y_{k}\|_{0,\Omega}^2+h_0^4\|u_{k+1}-u_k\|^2_{0,\Omega}\Big).\label{est_red_y_1}
\end{eqnarray}
For $T'\in \mathcal{T}_k$ we define $\mathcal{T}_{T'}=\{T\in \mathcal{T}_{k+1}: T\subset T'\}$. From the definition of bisection algorithm we know that for a marked element $T'\in\mathcal{M}_{k}$ and $T\in \mathcal{T}_{T'}$ there holds $h_T\leq 2^{-{r\over d}}h_{T'}$ and $[A\nabla y_k]=0$ across the edges of $T$ which lie in the interior of $T'$. Therefore,
\begin{eqnarray}
\eta_{k+1,y}^2(u_{k},y_{k},\mathcal{T}_{T'})\leq 2^{-{{3r}\over d}}\eta_{k,y}^2(u_{k},y_{k},T').
\end{eqnarray}
For $T'\in \mathcal{T}_k\backslash \mathcal{M}_k$, we have the monotonicity property $\eta_{k+1,y}(u_{k},y_{k},\mathcal{T}_{T'})\leq \eta_{k,y}(u_{k},y_{k},T')$ (see, for instance, \cite[Remark 2.1]{Cascon}). Summing over $T\in \mathcal{T}_{k+1}$ we obtain
\begin{eqnarray}
\eta_{k+1,y}^2(u_{k},y_{k},\mathcal{T}_{k+1})&\leq& 2^{-{{3r}\over d}}\eta_{k,y}^2(u_{k},y_{k},\mathcal{M}_k)+\eta_{k,y}^2(u_{k},y_{k},\mathcal{T}_k\backslash\mathcal{M}_k)\nonumber\\
&=&\eta_{k,y}^2(u_{k},y_{k},\mathcal{T}_k)-\lambda \eta_{k,y}^2(u_{k},y_{k},\mathcal{M}_k).\label{est_red_y_2}
\end{eqnarray}
Combining (\ref{est_red_y_1}) and (\ref{est_red_y_2}) prove (\ref{err_red_y}). Similarly, we can prove (\ref{err_red_p}).
\end{proof}

Now we are in the position to prove the contraction property for the  weighted energy norm errors of the state and adjoint state.
\begin{Theorem}\label{Thm:3}
Let $(u,y,p)\in U_{ad}\times H_0^1(\Omega)\times H_0^1(\Omega)$ be the solution of optimal control problem (\ref{OCP})-(\ref{OCP_state}) and $(u_k,y_k,p_k)\in U_{ad}\times V_k\times V_k$ be the solution of the discrete problem (\ref{OCP_state_h})-(\ref{OCP_adjoint_h}) generated by the adaptive Algorithm \ref{Alg:3.1}. Then there exist constant $\gamma>0$ and $\nu\in (0,1)$ depending on $c_\mathbb{T}$, $C_\mathbb{T}$, $C_{\rm reg}$, the shape regularity of $\mathcal{T}_0$, the parameter $\theta$ in Algorithm \ref{Alg:3.0} and the number of times $r$ that each element in $\mathcal{T}_k$ is bisected, such that for sufficiently small $\mu$ it holds
\begin{eqnarray}
&&\||h_{k+1}(y-y_{k+1})\||^2+\||h_{k+1}(p-p_{k+1})\||^2+\gamma \eta_{k+1}^2(\mathcal{T}_{k+1})\nonumber\\
&\leq& \nu^2\Big(\||h_{k}(y-y_{k})\||^2+\||h_{k}(p-p_{k})\||^2+\gamma \eta_{k}^2(\mathcal{T}_k)\Big)
\end{eqnarray}
provided that $h_0\ll 1$.
\end{Theorem}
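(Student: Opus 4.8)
The plan is to run the by-now standard contraction machinery of \cite{Cascon}, adapted to the weighted energy norm and with exact Pythagorean orthogonality replaced by the perturbed quasi-orthogonality and estimator reduction from Lemmas~\ref{Lemma:9} and~\ref{Lemma:10}. Throughout I abbreviate $E_k^2:=\||h_k(y-y_k)\||^2+\||h_k(p-p_k)\||^2$ and $D_{k+1}^2:=\||h_{k+1}(y_{k+1}-y_k)\||^2+\||h_{k+1}(p_{k+1}-p_k)\||^2$, and I keep the five parameters $\epsilon,\delta,\gamma,\mu,h_0$ free, to be fixed in a prescribed order at the end. First I would add the two quasi-orthogonality estimates of Lemma~\ref{Lemma:9}. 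Their right-hand sides contain only $L^2$-norm errors at levels $k$ and $k+1$, which I bound by $E_k^2$ and $E_{k+1}^2$ via Lemma~\ref{Lemma:11} (applicable since $\mathcal{T}_k,\mathcal{T}_{k+1}\in\mathbb{T}_\mu$ and $h_0\ll1$), giving
$$E_{k+1}^2+D_{k+1}^2-(1+\epsilon)E_k^2\lesssim \epsilon^{-1}(\mu^2+h_0^2)\,(E_k^2+E_{k+1}^2).$$
For $\mu,h_0$ small relative to $\epsilon$, the term $C\epsilon^{-1}(\mu^2+h_0^2)E_{k+1}^2$ is absorbed on the left, producing $E_{k+1}^2+D_{k+1}^2\le(1+c_1)E_k^2$ with $c_1=c_1(\epsilon,\mu,h_0)$ arbitrarily small.

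Next I would add the two estimator reductions of Lemma~\ref{Lemma:10}. The marking step in Algorithm~\ref{Alg:3.0} gives $\eta_k^2(\mathcal{M}_k)=\eta_{k,y}^2(\mathcal{M}_k)+\eta_{k,p}^2(\mathcal{M}_k)\ge\theta^2\eta_k^2(\mathcal{T}_k)$, so after bounding the lower order terms $\mu^2\|y_{k+1}-y_k\|_{0,\Omega}^2$ and $h_0^4\|u_{k+1}-u_k\|_{0,\Omega}^2$ (and their $p$-counterparts) by $(\mu^2+h_0^4)(E_k^2+E_{k+1}^2)$ through the triangle inequality and Lemma~\ref{Lemma:11}, one obtains
$$\eta_{k+1}^2(\mathcal{T}_{k+1})\le (1+\delta)(1-\lambda\theta^2)\eta_k^2(\mathcal{T}_k)+\delta^{-1}D_{k+1}^2+\delta^{-1}(\mu^2+h_0^4)(E_k^2+E_{k+1}^2).$$
Multiplying this by $\gamma$ and adding it to the first step gives the combined quantity. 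Choosing $\gamma\le\delta/C$, with $C$ the constant in front of $\delta^{-1}D_{k+1}^2$, lets $\gamma\delta^{-1}D_{k+1}^2$ be absorbed by $D_{k+1}^2$ on the left, and the $(\mu^2+h_0^4)$-weighted energy terms are again split between the two levels with the $(k+1)$-part absorbed. This yields
$$E_{k+1}^2+\gamma\eta_{k+1}^2(\mathcal{T}_{k+1})\le (1+c_2)E_k^2+\gamma(1+\delta)(1-\lambda\theta^2)\eta_k^2(\mathcal{T}_k),$$
with $c_2$ again small.

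The decisive point, which I expect to be the main obstacle, is that the coefficient $(1+c_2)$ of $E_k^2$ exceeds $1$, so this is not yet a contraction; in the clean Pythagorean setting of \cite{Cascon} the corresponding coefficient is exactly $1$, whereas here the perturbed orthogonality inflates it. To repair this I would, as in \cite{Cascon}, donate a fixed fraction of the estimator decrease to the energy error using the reliability bound of Lemma~\ref{Lemma:12}, namely $E_k^2\le C_3\eta_k^2(\mathcal{T}_k)$: replacing half of the product $\gamma(1+\delta)\lambda\theta^2\eta_k^2(\mathcal{T}_k)$ by $\tfrac12\gamma(1+\delta)\lambda\theta^2 C_3^{-1}E_k^2$ gives
$$E_{k+1}^2+\gamma\eta_{k+1}^2(\mathcal{T}_{k+1})\le \Big(1+c_2-\tfrac12\gamma(1+\delta)\lambda\theta^2 C_3^{-1}\Big)E_k^2+\gamma(1+\delta)\big(1-\tfrac12\lambda\theta^2\big)\eta_k^2(\mathcal{T}_k).$$

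Finally the parameters are fixed in the order: first $\delta$ so small that $(1+\delta)\big(1-\tfrac12\lambda\theta^2\big)=:\nu_1^2<1$ (possible since $1-\tfrac12\lambda\theta^2<1$ and $\lambda=1-2^{-3r/d}\in(0,1)$); then $\gamma\le\delta/C$; and finally $\epsilon,\mu,h_0$ so small that $c_2<\tfrac12\gamma(1+\delta)\lambda\theta^2 C_3^{-1}$, which renders the $E_k^2$-coefficient some $\nu_2^2<1$. Setting $\nu^2:=\max\{\nu_1^2,\nu_2^2\}<1$ delivers the claimed contraction, with $\nu$ depending only on the data listed in the statement. The only genuine difficulty is bookkeeping: one must check that the smallness requirements on $\epsilon,\mu,h_0$ can be met \emph{after} $\gamma,\delta$ are chosen — they can, since $\tfrac12\gamma(1+\delta)\lambda\theta^2C_3^{-1}$ is then a fixed positive number while $c_2\to0$ as $\epsilon,\mu,h_0\to0$ — and that each absorption keeps all constants independent of the level $k$.
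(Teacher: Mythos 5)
Your proposal is correct and follows essentially the same route as the paper's own proof: quasi-orthogonality (Lemma \ref{Lemma:9}) plus estimator reduction (Lemma \ref{Lemma:10}), D\"orfler marking, trading half of the estimator decrease for a decrease in $E_k^2$ via the reliability bound $E_k^2\leq C_3\eta_k^2(\mathcal{T}_k)$ of Lemma \ref{Lemma:12}, and then fixing $\delta$, $\gamma$, $\epsilon$, $\mu$, $h_0$ in exactly this order. The only (harmless) deviation is bookkeeping: you bound the $L^2$-norm perturbation terms by $E_k^2+E_{k+1}^2$ via Lemma \ref{Lemma:11} and absorb them into the energy part, whereas the paper bounds them by $\eta_k^2(\mathcal{T}_k)+\eta_{k+1}^2(\mathcal{T}_{k+1})$ via Theorem \ref{Thm:1} and absorbs them into the estimator part by shrinking $\tilde\gamma$ to $\gamma=\tilde\gamma-(\mu^2+h_0^2)(1+C_6\epsilon^{-1})$.
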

\begin{proof}
We use the abbreviations 
\begin{eqnarray}
E_{k}^2:=\||h_{k}(y-y_{k})\||^2+\||h_{k}(p-p_{k})\||^2,\nonumber\\
\tilde{E}_{k}^2:=\||h_{k+1}(y_{k+1}-y_k)\||^2+\||h_{k+1}(p_{k+1}-p_k)\||^2.\nonumber
\end{eqnarray}
From Lemma \ref{Lemma:12}  we have
\begin{eqnarray}
E_{k}^2&\leq& C_3 \eta_k^2(\mathcal{T}_k).\label{step_1}
\end{eqnarray}
We can conclude from Lemma \ref{Lemma:9} and Theorem \ref{Thm:1} that
\begin{eqnarray}
E_{k+1}^2&\leq& (1+\epsilon)E_k^2-\tilde{E}_k^2+C_6\epsilon^{-1}(\mu^2+h_0^2)\big(\eta_k^2(\mathcal{T}_k)+\eta_{k+1}^2(\mathcal{T}_{k+1})\big).\label{step_2}
\end{eqnarray}
Moreover, it follows from Lemma \ref{Lemma:10}, the triangle inequality and Theorem \ref{Thm:1} that
\begin{eqnarray}
\eta_{k+1}^2(\mathcal{T}_{k+1})&\leq& (1+\delta )(\eta_{k}^2(\mathcal{T}_k)-\lambda\eta_{k}^2(\mathcal{M}_k))+C_7\delta^{-1}\Big(\tilde{E}_k^2+(\mu^2+h_0^4)\big(\eta_k^2(\mathcal{T}_k)+\eta_{k+1}^2(\mathcal{T}_{k+1})\big)\Big)\nonumber\\
&\leq&(1+\delta )(1-\lambda\theta^2)\eta_{k}^2(\mathcal{T}_k)+C_7\delta^{-1}\Big(\tilde{E}_k^2+(\mu^2+h_0^4)\big(\eta_k^2(\mathcal{T}_k)+\eta_{k+1}^2(\mathcal{T}_{k+1})\big)\Big)\nonumber\\
&\leq&(1+\delta )\Big((1-{1\over 2}\lambda\theta^2)\eta_{k}^2(\mathcal{T}_k)-\frac{1}{2C_3}\lambda\theta^2E_k^2\Big)\nonumber\\
&&+C_7\delta^{-1}\Big(\tilde{E}_k^2+(\mu^2+h_0^4)\big(\eta_k^2(\mathcal{T}_k)+\eta_{k+1}^2(\mathcal{T}_{k+1})\big)\Big),\label{step_3}
\end{eqnarray}
where we used D\"orfler's marking strategy in Algorithm \ref{Alg:3.0} in the second inequality and  (\ref{step_1}) in the third inequality. 

Now we multiply (\ref{step_3}) with $\tilde \gamma =\delta C_7^{-1}$, the sum of which with (\ref{step_2}) gives
\begin{eqnarray}
E_{k+1}^2+\tilde \gamma\eta_{k+1}^2(\mathcal{T}_{k+1})&\leq& (1+\delta )(1-{1\over 2}\lambda\theta^2)\tilde \gamma\eta_{k}^2(\mathcal{T}_k)+\Big((1+\epsilon)- (1+\delta )\frac{\tilde\gamma}{2C_3}\lambda\theta^2\Big)E_k^2\nonumber\\
&&+(\mu^2+h_0^2)(1+C_6\epsilon^{-1})\big(\eta_k^2(\mathcal{T}_k)+\eta_{k+1}^2(\mathcal{T}_{k+1})\big).\label{step_4}
\end{eqnarray}
We set $\delta$ and $\epsilon$ sufficiently small such that there holds
\begin{eqnarray}
\tilde\nu^2:=\max\Big\{(1+\delta )(1-{1\over 2}\lambda\theta^2),(1+\epsilon)- (1+\delta )\frac{\tilde\gamma}{2C_3}\lambda\theta^2\Big\}<1.\label{tilde_nu}
\end{eqnarray}
It follows from (\ref{step_4}) that
\begin{eqnarray}
&&E_{k+1}^2+\Big(\tilde\gamma-(\mu^2+h_0^2)(1+C_6\epsilon^{-1})\Big)\eta_{k+1}^2(\mathcal{T}_{k+1})\nonumber\\
&\leq& \tilde\nu^2E_k^2+\Big(\tilde\nu^2\tilde\gamma+(\mu^2+h_0^2)(1+C_6\epsilon^{-1})\Big)\eta_k^2(\mathcal{T}_k).
\end{eqnarray}
Choosing $\mu$ sufficiently small and $h_0\ll 1$ such that 
\begin{eqnarray}
0<\frac{\tilde\nu^2\tilde\gamma+(\mu^2+h_0^2)(1+C_6\epsilon^{-1})}{\tilde\gamma-(\mu^2+h_0^2)(1+C_6\epsilon^{-1})}\leq \frac{1+\tilde \nu^2}{2}:=\nu^2.
\end{eqnarray}
By choosing $\gamma:=\tilde \gamma-(\mu^2+h_0^2)(1+C_6\epsilon^{-1})$ we obtain
\begin{eqnarray}
E_{k+1}^2+\gamma\eta_{k+1}^2(\mathcal{T}_{k+1})
&\leq& \tilde\nu^2E_k^2+\gamma\frac{\tilde\nu^2\tilde\gamma+(\mu^2+h_0^2)(1+C_6\epsilon^{-1})}{\tilde\gamma-(\mu^2+h_0^2)(1+C_6\epsilon^{-1})}\eta_k^2(\mathcal{T}_k)\nonumber\\
&\leq&\tilde\nu^2E_k^2+\gamma\nu^2\eta_k^2(\mathcal{T}_k)\nonumber\\
&\leq&\nu^2\Big(\frac{2\tilde\nu^2}{1+\tilde \nu^2}E_k^2+\gamma\eta_k^2(\mathcal{T}_k)\Big)\nonumber\\
&\leq&\nu^2\Big(E_k^2+\gamma\eta_k^2(\mathcal{T}_k)\Big)
\end{eqnarray}
in view of (\ref{tilde_nu}). This completes the proof.
\end{proof}

The convergence of the $L^2$-norm errors for both the control, the state and adjoint state is the direct consequence of that of the weighted energy norm errors and the equivalence between them.
\begin{Theorem}\label{Thm:4}
Let $(u,y,p)\in U_{ad}\times H_0^1(\Omega)\times H_0^1(\Omega)$ be the solution of optimal control problem (\ref{OCP})-(\ref{OCP_state}) and $(u_k,y_k,p_k)\in U_{ad}\times V_k\times V_k$ be the solution of the discrete problem (\ref{OCP_state_h})-(\ref{OCP_adjoint_h}) generated by the adaptive Algorithm \ref{Alg:3.1}. Let the assumptions in Theorem \ref{Thm:3} be fulfilled, it holds that for $k\geq l$
\begin{eqnarray}
&&\|u-u_{k+1}\|_{0,\Omega}+\|y-y_{k+1}\|_{0,\Omega}+\|p-p_{k+1}\|_{0,\Omega}+{\rm osc}_{k+1}\nonumber\\
&\lesssim& \nu^{k-l}\Big(\|u-u_{l}\|_{0,\Omega}+\|y-y_{l}\|_{0,\Omega}+\|p-p_{l}\|_{0,\Omega}+{\rm osc}_{l}\Big)
\end{eqnarray}
provided that $h_0\ll 1$.
\end{Theorem}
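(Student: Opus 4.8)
The plan is to read off the $L^2$-norm contraction directly from the weighted-energy contraction of Theorem \ref{Thm:3}, by sandwiching the $L^2$ quantities between the weighted energy norm errors and the estimator. Keeping the abbreviation $E_k^2:=\||h_k(y-y_k)\||^2+\||h_k(p-p_k)\||^2$ from the proof of Theorem \ref{Thm:3}, iterating the one-step contraction from level $l$ to level $k+1$ gives
\[
E_{k+1}^2+\gamma\eta_{k+1}^2(\mathcal{T}_{k+1})\leq \nu^{2(k+1-l)}\big(E_l^2+\gamma\eta_l^2(\mathcal{T}_l)\big),
\]
which is the engine of the whole argument. The remaining work is to replace $E_{k+1}$ and $\eta_{k+1}(\mathcal{T}_{k+1})$ on the left by the $L^2$ errors plus oscillation at level $k+1$, and to bound $E_l$ and $\eta_l(\mathcal{T}_l)$ on the right by the $L^2$ errors plus oscillation at level $l$.

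For the upper (left-hand) side I would first invoke Lemma \ref{Lemma:11}, applicable since every $\mathcal{T}_k\in\mathbb{T}_\mu$ and $h_0\ll1$, to obtain $\|u-u_{k+1}\|_{0,\Omega}+\|y-y_{k+1}\|_{0,\Omega}+\|p-p_{k+1}\|_{0,\Omega}\lesssim E_{k+1}$. The oscillation ${\rm osc}_{k+1}$ is then controlled by the estimator itself: because $h_T=|T|^{1/d}$ is constant on each $T$ and the $L^2$-projection is a contraction, every oscillation term is dominated by the corresponding volume residual contribution in $\eta_{k+1}(\mathcal{T}_{k+1})$, so that ${\rm osc}_{k+1}\lesssim\eta_{k+1}(\mathcal{T}_{k+1})$. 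Since $\gamma$ is a fixed positive constant, $E_{k+1}+\eta_{k+1}(\mathcal{T}_{k+1})\lesssim\big(E_{k+1}^2+\gamma\eta_{k+1}^2(\mathcal{T}_{k+1})\big)^{1/2}$, and therefore the entire left-hand side of the claim is bounded by $\big(E_{k+1}^2+\gamma\eta_{k+1}^2(\mathcal{T}_{k+1})\big)^{1/2}$.

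For the lower (right-hand) side I would bound $E_l^2+\gamma\eta_l^2(\mathcal{T}_l)$ from above by the data at level $l$. The term $E_l^2$ is handled by the displayed inequality following Lemma \ref{Lemma:12} (a consequence of Theorem \ref{Thm:1} and Lemma \ref{Lemma:12}), which yields $E_l^2\lesssim\|u-u_l\|_{0,\Omega}^2+\|y-y_l\|_{0,\Omega}^2+\|p-p_l\|_{0,\Omega}^2+{\rm osc}_l^2$, while $\eta_l(\mathcal{T}_l)$ is controlled by the global lower bound (\ref{OCP_error_lower}) of Theorem \ref{Thm:1}, giving $\eta_l(\mathcal{T}_l)\lesssim\|u-u_l\|_{0,\Omega}+\|y-y_l\|_{0,\Omega}+\|p-p_l\|_{0,\Omega}+{\rm osc}_l$. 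Combining these two estimates produces $\big(E_l^2+\gamma\eta_l^2(\mathcal{T}_l)\big)^{1/2}\lesssim\|u-u_l\|_{0,\Omega}+\|y-y_l\|_{0,\Omega}+\|p-p_l\|_{0,\Omega}+{\rm osc}_l$.

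Chaining the three displayed bounds and using $\nu^{k+1-l}\leq\nu^{k-l}$ (since $\nu\in(0,1)$) delivers the asserted contraction. The only genuinely delicate point is the oscillation bookkeeping: one must check both that ${\rm osc}_{k+1}$ is absorbed into the estimator on the left and that the ${\rm osc}_l$ produced on the right is precisely the oscillation appearing in the efficiency estimate of Theorem \ref{Thm:1}; everything else is a mechanical assembly of results already established. All hidden constants remain independent of $k$ because they depend only on the shape regularity of $\mathcal{T}_0$, the constants $c_\mathbb{T}$, $C_\mathbb{T}$, $C_{\rm reg}$, the marking parameter $\theta$, the refinement count $r$, and the fixed constant $\gamma$.
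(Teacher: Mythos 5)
Your proposal is correct and follows essentially the same route as the paper's own proof: the paper likewise establishes the equivalence $\big(\|u-u_k\|_{0,\Omega}+\|y-y_k\|_{0,\Omega}+\|p-p_k\|_{0,\Omega}+{\rm osc}_k\big)^2\simeq E_k^2+\gamma\eta_k^2(\mathcal{T}_k)$ via Lemma \ref{Lemma:11}, the dominance of the estimator over oscillation, Lemma \ref{Lemma:12} and the efficiency bound (\ref{OCP_error_lower}) of Theorem \ref{Thm:1}, and then applies the iterated contraction of Theorem \ref{Thm:3}. Your only organizational difference is that you invoke just the one-sided bound needed at each level (upper at level $k+1$, lower at level $l$) rather than stating the full two-sided equivalence, which changes nothing of substance.
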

\begin{proof}
From the dominance of the indicator over oscillation (see \cite[Remark 2.1]{Cascon}) we have
\begin{eqnarray}
{\rm osc}_{k+1}(f+u_{k+1}-\mathcal{L}y_{k+1},\mathcal{T}_{k+1})\leq \eta_{k+1,y}(u_{k+1},y_{k+1},\mathcal{T}_{k+1}),\nonumber\\
{\rm osc}_{k+1}(y_{k+1}-y_d-\mathcal{L}^*p_{k+1},\mathcal{T}_{k+1})\leq \eta_{k+1,p}(y_{k+1},p_{k+1},\mathcal{T}_{k+1}),\nonumber
\end{eqnarray}
which in turn implies
\begin{eqnarray}
{\rm osc}_{k+1}^2\leq \eta_{k+1}^2(\mathcal{T}_{k+1}),
\end{eqnarray}
this together with Lemma \ref{Lemma:11} yields
\begin{eqnarray}
&&(\|u-u_{k+1}\|_{0,\Omega}+\|y-y_{k+1}\|_{0,\Omega}+\|p-p_{k+1}\|_{0,\Omega}+{\rm osc}_{k+1})^2\nonumber\\
&\lesssim& 
\||h_{k+1}(y-y_{k+1})\||^2+\||h_{k+1}(p-p_{k+1})\||^2+\gamma\eta_{k+1}^2(\mathcal{T}_{k+1}).\label{equi_1}
\end{eqnarray}
On the other hand, it follows from Lemma \ref{Lemma:12} and Theorem \ref{Thm:1} that
\begin{eqnarray}
&&\||h_{k+1}(y-y_{k+1})\||^2+\||h_{k+1}(p-p_{k+1})\||^2+\gamma\eta_{k+1}^2(\mathcal{T}_{k+1})\nonumber\\
&\lesssim& \eta^2_{{k+1}}(\mathcal{T}_{k+1})\nonumber\\
&\lesssim&(\|u-u_{k+1}\|_{0,\Omega}+\|y-y_{k+1}\|_{0,\Omega}+\|p-p_{k+1}\|_{0,\Omega}+{\rm osc}_{k+1})^2.\label{equi_2}
\end{eqnarray}
Combining (\ref{equi_1}) and (\ref{equi_2}) we arrive at
\begin{eqnarray}
&&(\|u-u_{k+1}\|_{0,\Omega}+\|y-y_{k+1}\|_{0,\Omega}+\|p-p_{k+1}\|_{0,\Omega}+{\rm osc}_{k+1})^2\nonumber\\
&\simeq& 
\||h_{k+1}(y-y_{k+1})\||^2+\||h_{k+1}(p-p_{k+1})\||^2+\gamma\eta_{k+1}^2(\mathcal{T}_{k+1}),
\end{eqnarray}
this together with Theorem \ref{Thm:3} completes the proof.
\end{proof}

%%%%%%%%%%%%%%%%%%%%%%%%%%%%%%%%%%%%%%%%
\section{Complexity of AFEM for the optimal control problem under $L^2$-norm}
\setcounter{equation}{0}
In this section we prove the quasi-optimal complexity of $L^2$-norm based AFEM for solving optimal control problems. To begin with, we follow the idea of \cite{Demlow} (see \cite{Cascon,Stevenson} for the definitions of approximation classes with respect to the energy norm based AFEM) to introduce the approximation class $\mathcal{A}^s$ for $s>0$:
\begin{eqnarray}
\mathcal{A}^s=\Big\{ (y,p)\in H_0^1(\Omega)\times H_0^1(\Omega):\quad \mathcal{L} y,\mathcal{L}^* p\in L^2(\Omega), \ |(y,p)|_{\mathcal{A}^s}<\infty\Big\},\nonumber
\end{eqnarray}
where 
\begin{eqnarray}
|(y,p)|_{\mathcal{A}^s}&:=&\sup\limits_{\varepsilon >0}\varepsilon\inf\limits_{\mathcal{T}\in\mathbb{T}:\inf\limits_{v_\mathcal{T}\in V_\mathcal{T}}(\|y-v_\mathcal{T}\|_{0,\Omega}+{\rm osc}(v_\mathcal{T}-y_d,\mathcal{T})+{\rm osc}(\mathcal{L}v_\mathcal{T},\mathcal{T}))\hfill \atop +\inf\limits_{w_\mathcal{T}\in V_\mathcal{T}}(\|p-w_\mathcal{T}\|_{0,\Omega}+{\rm osc}(f+P_{[a,b]}(w_\mathcal{T}),\mathcal{T})+{\rm osc}(\mathcal{L}^*w_\mathcal{T},\mathcal{T}))\leq\varepsilon\hfill}\Big(\#\mathcal{T}-\#\mathcal{T}_0\Big)^s.\nonumber
\end{eqnarray}
In current paper we assume that $\Omega$ is convex and $V_\mathcal{T}$ is linear. Therefore, it holds $(H^2(\Omega)\cap H_0^1(\Omega))^2\subset \mathcal{A}^s$ with $s=1$ if $d=2$ and $s={2\over 3}$ if $d=3$. However, the class $\mathcal{A}^s$ is much larger than $(H^2(\Omega)\cap H_0^1(\Omega))^2$ which makes the $L^2$-norm based AFEM attractive, although the rate $s$ is already realized with uniform refinements if $y,p\in H^2(\Omega)\cap H_0^1(\Omega)$. We refer to \cite[Sec. 7]{Demlow} for more details.

To prove the optimality of AFEM we shall give the complexity of refinement, we refer to \cite[Lemma 2.3]{Cascon} and \cite{Rob} for related results. The following lemma shows that the difference between the cardinalities of the output and initial partitions can be bounded from above by some multiple of the total number of marked elements.  
\begin{Lemma}\cite[Theorem 4]{Demlow}\label{Lemma:15}
Let $K$ be the total number of calls of  bisection algorithm in \cite[P. 210]{Demlow}, so that $K$ is no larger than the sum of the cardinalities of all sets of marked simplicies. Then for the output partition $\mathcal{T}$, it holds that $\#\mathcal{T}-\#\mathcal{T}_0\lesssim K$.
\end{Lemma}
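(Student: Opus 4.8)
The plan is to reduce this statement to the purely combinatorial complexity theory for newest vertex bisection (NVB), following the charging arguments of Binev--Dahmen--DeVore \cite{Binev} and Stevenson \cite{Stevenson} as adapted to mildly graded partitions in \cite{Demlow}. Since the assertion concerns only the refinement routine and the sequence of marked sets $\mathcal{M}_k$, the optimal control structure plays no role here at all; the control, state and adjoint variables may be ignored, and the target inequality is $\#\mathcal{T}-\#\mathcal{T}_0\lesssim K$, where $K$ counts the total number of bisection calls (one per marked simplex, whence the trivial bound $K\le\sum_k\#\mathcal{M}_k$). The nontrivial content is that the conforming completion, i.e.\ the extra refinements forced to remove hanging nodes, does not inflate the element count beyond a fixed multiple of $K$.

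First I would set up the bookkeeping common to all such proofs: to each simplex produced during the refinement process assign a \emph{generation} $g(T)$, equal to the number of bisections separating $T$ from its ancestor in $\mathcal{T}_0$, so that $h_T\simeq 2^{-g(T)/d}$ since $\mathcal{T}_0$ is quasi-uniform. Under the labeling (compatibility) assumption imposed on the initial mesh $\mathcal{T}_0$, every call to REFINE completes to a conforming partition, and each individual bisection call either subdivides a currently marked simplex or is forced by the closure step. The heart of the argument is then a \emph{charging lemma}: one constructs a map $\lambda$ assigning to every simplex created throughout the entire sequence $\mathcal{T}_0,\mathcal{T}_1,\dots$ a marked simplex that triggered its creation, in such a way that the number of simplices charged to a fixed marked simplex $T'\in\mathcal{M}_k$ is bounded by a geometric series in the generation gap. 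The essential geometric fact of Stevenson is that the influence of bisecting one simplex on elements of much lower generation decays like $2^{-c\,|g(T)-g(T')|}$, so that the total charge to each marked simplex is summable and hence bounded by an absolute constant; summing over all marked simplices gives $\#\mathcal{T}-\#\mathcal{T}_0\lesssim\sum_k\#\mathcal{M}_k\lesssim K$.

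The main obstacle, and the reason the result is nontrivial, is precisely controlling this cascade: a single bisection can force refinement of a neighbour, which forces further refinements, and one must show the recursion does not propagate unboundedly down to coarse elements. This is exactly the content of Stevenson's completion estimate, whose geometric decay of influence across generations makes the charges summable; establishing (or invoking) that estimate in the correct generality is where the real work lies.

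A secondary point specific to the present setting is that additional bisections are performed to enforce the grading condition (\ref{mesh_grading}) beyond mere conformity. Here I would invoke \cite[Appendix A]{Demlow}, which shows that these extra refinements inflate the number of simplices by at most a fixed factor depending only on the grading parameter $\mu$, so that the bound $\#\mathcal{T}-\#\mathcal{T}_0\lesssim K$ survives the modification. In short, the proof is that of \cite[Theorem 4]{Demlow}: it combines the standard NVB charging argument with the graded-completion count, and no new ideas beyond these are required.
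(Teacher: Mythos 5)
The paper gives no proof of this lemma at all: it is imported verbatim as \cite[Theorem 4]{Demlow}, and your sketch is a faithful outline of precisely that cited argument --- the Binev--Dahmen--DeVore/Stevenson charging-by-generation estimate for the conforming completion of newest vertex bisection, combined with the graded-completion count of \cite[Appendix A]{Demlow} to absorb the extra refinements forced by the mesh-grading condition. Your proposal is therefore correct and takes essentially the same route as the paper, namely resting the statement on the external result it cites.
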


Then we present the following localized upper bounds for the approximations of the optimal control problems.
\begin{Lemma}\label{Lemma:15}
Let $(u,y,p)\in U_{ad}\times H_0^1(\Omega)\times H_0^1(\Omega)$ be the solution of optimal control problem (\ref{OCP})-(\ref{OCP_state}). Given sufficiently small $\mu$, let $\mathcal{T}\in \mathbb{T}_\mu$ and $\mathcal{T}\subset\tilde{\mathcal{T}}\in \mathbb{T}$, $(u_\mathcal{T},y_\mathcal{T},p_\mathcal{T})\in U_{ad}\times V_\mathcal{T}\times V_\mathcal{T}$ and $(u_{\tilde{\mathcal{T}}},y_{\tilde{\mathcal{T}}},p_{\tilde{\mathcal{T}}})\in U_{ad}\times V_{\tilde{\mathcal{T}}}\times V_{\tilde{\mathcal{T}}}$ be the solutions of the discrete problem (\ref{OCP_state_h})-(\ref{OCP_adjoint_h}) on $\mathcal{T}$ and ${\tilde{\mathcal{T}}}$, respectively. Then  there holds
\begin{eqnarray}
\|u_\mathcal{T}-u_{\tilde{\mathcal{T}}}\|_{0,\Omega}+\|y_\mathcal{T}-y_{\tilde{\mathcal{T}}}\|_{0,\Omega}+\|p_\mathcal{T}-p_{\tilde{\mathcal{T}}}\|_{0,\Omega}\leq C_4 \eta_{\mathcal{T}} (\mathcal{R}_{\mathcal{T}\rightarrow \tilde{\mathcal{T}}}),\label{localized_OCP}
\end{eqnarray}
where $\mathcal{R}_{\mathcal{T}\rightarrow \tilde{\mathcal{T}}}$ is the subset of elements that are refined from $\mathcal{T}$ to $\tilde{\mathcal{T}}$ and $C_4$ is independent of the mesh size.
\end{Lemma}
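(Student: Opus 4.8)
The plan is to imitate the proof of Theorem \ref{Thm:1}, but with the finer discrete solution operators $S_{\tilde{\mathcal{T}}},S_{\tilde{\mathcal{T}}}^*$ playing the role that the exact operators $S,S^*$ play there, and then to invoke a \emph{localized} (rather than global) discrete reliability estimate for the underlying elliptic equations. First I would exploit the variational control discretization: since the control is not discretized, both $u_\mathcal{T}$ and $u_{\tilde{\mathcal{T}}}$ lie in $U_{ad}$ and are therefore admissible test functions in both discrete optimality conditions. Setting $v_\mathcal{T}=u_{\tilde{\mathcal{T}}}$ in (\ref{OCP_opt_h}) for $\mathcal{T}$ and $v_{\tilde{\mathcal{T}}}=u_\mathcal{T}$ in (\ref{OCP_opt_h}) for $\tilde{\mathcal{T}}$ and adding yields
\[
\alpha\|u_\mathcal{T}-u_{\tilde{\mathcal{T}}}\|_{0,\Omega}^2\leq (p_{\tilde{\mathcal{T}}}-p_\mathcal{T},u_\mathcal{T}-u_{\tilde{\mathcal{T}}}),
\]
which is the exact analogue of the starting inequality in Theorem \ref{Thm:1} under the correspondence $(u,S,S^*)\leftrightarrow(u_{\tilde{\mathcal{T}}},S_{\tilde{\mathcal{T}}},S_{\tilde{\mathcal{T}}}^*)$ and $(u_\mathcal{T},S_\mathcal{T},S_\mathcal{T}^*)\leftrightarrow(u_\mathcal{T},S_\mathcal{T},S_\mathcal{T}^*)$.

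Next I would repeat the algebraic manipulation of (\ref{u_rep})--(\ref{u_error}) verbatim, which is legitimate because $S_{\tilde{\mathcal{T}}}$ obeys the same adjunction $(S_{\tilde{\mathcal{T}}}^*g,f)=(S_{\tilde{\mathcal{T}}}f,g)$ used there for $S$ (this follows by testing the discrete state and adjoint equations against each other in $V_{\tilde{\mathcal{T}}}$). Together with Young's inequality and the triangle inequality for $\|p_\mathcal{T}-p_{\tilde{\mathcal{T}}}\|_{0,\Omega}$ this reduces the claim to estimating
\[
\|S_{\tilde{\mathcal{T}}}u_\mathcal{T}-y_\mathcal{T}\|_{0,\Omega}\quad\text{and}\quad\|S_{\tilde{\mathcal{T}}}^*(y_\mathcal{T}-y_d)-p_\mathcal{T}\|_{0,\Omega}.
\]
The crucial observation is that, since $y_\mathcal{T}=S_\mathcal{T}u_\mathcal{T}$ and $p_\mathcal{T}=S_\mathcal{T}^*(y_\mathcal{T}-y_d)$, these are the $L^2$-differences of the \emph{fine} and \emph{coarse} Galerkin solutions of \emph{one and the same} fixed-data elliptic boundary value problem posed on the nested pair $\mathcal{T}\subset\tilde{\mathcal{T}}$ (data $f+u_\mathcal{T}$ for the state, $y_\mathcal{T}-y_d$ for the adjoint). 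This is exactly the situation in which a discrete (localized) reliability estimate applies.

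I would then invoke the localized $L^2$-norm discrete upper bound for the elliptic equation on mildly graded meshes from \cite{Demlow} (the localized counterpart, valid for $\mathcal{T}\in\mathbb{T}_\mu$, of the global reliability used in Theorem \ref{Thm:1}; compare the discrete reliability of \cite[Lemma 2.5]{Cascon} in energy norm), giving
\[
\|S_{\tilde{\mathcal{T}}}u_\mathcal{T}-S_\mathcal{T}u_\mathcal{T}\|_{0,\Omega}\lesssim\eta_{\mathcal{T},y}(u_\mathcal{T},y_\mathcal{T},\mathcal{R}_{\mathcal{T}\rightarrow\tilde{\mathcal{T}}}),\qquad
\|S_{\tilde{\mathcal{T}}}^*(y_\mathcal{T}-y_d)-S_\mathcal{T}^*(y_\mathcal{T}-y_d)\|_{0,\Omega}\lesssim\eta_{\mathcal{T},p}(y_\mathcal{T},p_\mathcal{T},\mathcal{R}_{\mathcal{T}\rightarrow\tilde{\mathcal{T}}});
\]
localization to $\mathcal{R}_{\mathcal{T}\rightarrow\tilde{\mathcal{T}}}$ is possible because the indicator contributions over the elements that are common to $\mathcal{T}$ and $\tilde{\mathcal{T}}$ are controlled by the Galerkin orthogonality inherited by the finer solution on those elements. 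Finally, for the adjoint I would split $\|p_\mathcal{T}-p_{\tilde{\mathcal{T}}}\|_{0,\Omega}\leq\|p_\mathcal{T}-S_{\tilde{\mathcal{T}}}^*(y_\mathcal{T}-y_d)\|_{0,\Omega}+\|S_{\tilde{\mathcal{T}}}^*(y_\mathcal{T}-y_{\tilde{\mathcal{T}}})\|_{0,\Omega}$, the second term being $\lesssim\|y_\mathcal{T}-y_{\tilde{\mathcal{T}}}\|_{0,\Omega}$ by $L^2$-stability of $S_{\tilde{\mathcal{T}}}^*$; collecting all the pieces and using $\eta_{\mathcal{T}}^2(\mathcal{R}_{\mathcal{T}\rightarrow\tilde{\mathcal{T}}})=\eta_{\mathcal{T},y}^2(u_\mathcal{T},y_\mathcal{T},\mathcal{R}_{\mathcal{T}\rightarrow\tilde{\mathcal{T}}})+\eta_{\mathcal{T},p}^2(y_\mathcal{T},p_\mathcal{T},\mathcal{R}_{\mathcal{T}\rightarrow\tilde{\mathcal{T}}})$ yields (\ref{localized_OCP}) with $C_4$ independent of the mesh size.

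The main obstacle is the localized discrete reliability in the third paragraph: everything else is bookkeeping copied from Theorem \ref{Thm:1} and the triangle inequality, but confining the estimator to the refined set $\mathcal{R}_{\mathcal{T}\rightarrow\tilde{\mathcal{T}}}$ (rather than all of $\mathcal{T}$) in the $L^2$-norm is genuinely delicate. It requires the weighted energy-norm machinery of \cite{Demlow} together with the mild grading hypothesis $\mathcal{T}\in\mathbb{T}_\mu$, and it is precisely this step that provides the locality needed later for the quasi-optimal complexity estimate.
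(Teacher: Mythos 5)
Your proposal is correct and follows essentially the same route as the paper: test each discrete optimality condition with the other mesh's control (admissible thanks to variational discretization), rerun the duality-plus-Young manipulation of Theorem \ref{Thm:1} with $S_{\tilde{\mathcal{T}}},S_{\tilde{\mathcal{T}}}^*$ in place of $S,S^*$, reduce to the coarse-versus-fine Galerkin errors for the fixed-data state and adjoint problems, and close with the localized $L^2$ discrete reliability of \cite{Demlow} (Lemma 2 there) together with the triangle inequality and discrete stability for the adjoint. The paper likewise treats that localized discrete reliability as an imported result rather than reproving it, so your identification of it as the one genuinely delicate ingredient matches the paper's structure exactly.
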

\begin{proof}
Note that $V_\mathcal{T}\subset V_{\tilde{\mathcal{T}}}$. From (\ref{OCP_opt_h}) we have
\begin{eqnarray}
(\alpha u_\mathcal{T}+S^*_\mathcal{T}(y_\mathcal{T}-y_d),v_\mathcal{T}-u_\mathcal{T})\geq 0\quad \forall v_\mathcal{T}\in U_{ad}, \nonumber\\
(\alpha u_{\tilde{\mathcal{T}}}+S^*_{\tilde{\mathcal{T}}}(y_{\tilde{\mathcal{T}}}-y_d),v_{\tilde{\mathcal{T}}}-u_{\tilde{\mathcal{T}}})\geq 0\quad \forall v_{\tilde{\mathcal{T}}}\in U_{ad}. \nonumber
\end{eqnarray}
Setting $v_\mathcal{T}= u_{\tilde{\mathcal{T}}}$ and $v_{\tilde{\mathcal{T}}} = u_\mathcal{T}$ in above inequalities and adding them together, we conclude from (\ref{OCP_state_h}) and (\ref{OCP_adjoint_h})  that
\begin{eqnarray}
&&\alpha\|u_\mathcal{T}- u_{\tilde{\mathcal{T}}}\|^2_{0,\Omega}\leq (S^*_\mathcal{T}(y_\mathcal{T}-y_d)-S^*_{\tilde{\mathcal{T}}}(y_{\tilde{\mathcal{T}}}-y_d),u_{\tilde{\mathcal{T}}}- u_\mathcal{T})\nonumber\\
&=& (S^*_\mathcal{T}(y_\mathcal{T}-y_d)-S^*_{\tilde{\mathcal{T}}}(y_\mathcal{T}-y_d),u_{\tilde{\mathcal{T}}}- u_\mathcal{T})+(S^*_{\tilde{\mathcal{T}}}(y_\mathcal{T}-y_d)-S^*_{\tilde{\mathcal{T}}}(y_{\tilde{\mathcal{T}}}-y_d),u_{\tilde{\mathcal{T}}}-u_\mathcal{T})\nonumber\\
&=&(S^*_\mathcal{T}(y_\mathcal{T}-y_d)-S^*_{\tilde{\mathcal{T}}}(y_\mathcal{T}-y_d),u_{\tilde{\mathcal{T}}}- u_\mathcal{T})+(S_\mathcal{T}u_\mathcal{T}-S_{\tilde{\mathcal{T}}}u_{\tilde{\mathcal{T}}},S_{\tilde{\mathcal{T}}}u_{\tilde{\mathcal{T}}}-S_{\tilde{\mathcal{T}}}u_\mathcal{T})\nonumber\\
&=&(S^*_\mathcal{T}(y_\mathcal{T}-y_d)-S^*_{\tilde{\mathcal{T}}}(y_\mathcal{T}-y_d),u_{\tilde{\mathcal{T}}}- u_\mathcal{T})+(S_\mathcal{T}u_\mathcal{T}-S_{\tilde{\mathcal{T}}}u_{\tilde{\mathcal{T}}},S_{\tilde{\mathcal{T}}}u_{\tilde{\mathcal{T}}}-S_\mathcal{T}u_\mathcal{T})\nonumber\\
&&+(S_\mathcal{T}u_\mathcal{T}-S_{\tilde{\mathcal{T}}}u_{\tilde{\mathcal{T}}},S_\mathcal{T} u_\mathcal{T}-S_{\tilde{\mathcal{T}}}u_\mathcal{T}).\label{u_rep_k}
\end{eqnarray}
It follows from Young's inequality that
\begin{eqnarray}
&&\alpha\|u_\mathcal{T}-u_{\tilde{\mathcal{T}}}\|^2_{0,\Omega}+\|y_\mathcal{T}-y_{\tilde{\mathcal{T}}}\|^2_{0,\Omega}\nonumber\\
&\leq&  C\|S_{\tilde{\mathcal{T}}}u_\mathcal{T}-S_\mathcal{T} u_\mathcal{T}\|^2_{0,\Omega}+C\|S^*_{\tilde{\mathcal{T}}}(y_\mathcal{T}-y_d)-S^*_\mathcal{T}(y_\mathcal{T}-y_d)\|_{0,\Omega}^2,\label{u_error_k}
\end{eqnarray}
where we used the fact that $y_{\tilde{\mathcal{T}}}=S_{\tilde{\mathcal{T}}}u_{\tilde{\mathcal{T}}}$ and $y_\mathcal{T}=S_\mathcal{T} u_\mathcal{T}$. Moreover, from the triangle inequality, the discrete stability of elliptic equation and (\ref{u_error_k}) we infer that
\begin{eqnarray}
\|p_\mathcal{T}-p_{\tilde{\mathcal{T}}}\|_{0,\Omega}&\leq& \|S^*_{\tilde{\mathcal{T}}}(y_\mathcal{T}-y_d)-S^*_{\tilde{\mathcal{T}}}(y_{\tilde{\mathcal{T}}}-y_d)\|_{0,\Omega}+\|S^*_{\tilde{\mathcal{T}}}(y_\mathcal{T}-y_d)-S^*_\mathcal{T}(y_\mathcal{T}-y_d)\|_{0,\Omega}\nonumber\\
&\leq&C\|y_\mathcal{T}-y_{\tilde{\mathcal{T}}}\|_{0,\Omega}+\|S^*_{\tilde{\mathcal{T}}}(y_\mathcal{T}-y_d)-S^*_\mathcal{T}(y_\mathcal{T}-y_d)\|_{0,\Omega}\nonumber\\
&\leq&  C\|S_{\tilde{\mathcal{T}}}u_\mathcal{T}-S_\mathcal{T} u_\mathcal{T}\|_{0,\Omega}+C\|u_\mathcal{T}-u_{\tilde{\mathcal{T}}}\|_{0,\Omega}+C\|S^*_{\tilde{\mathcal{T}}}(y_\mathcal{T}-y_d)-S^*_\mathcal{T}(y_\mathcal{T}-y_d)\|_{0,\Omega}\nonumber\\
&\leq&  C\|S_{\tilde{\mathcal{T}}}u_\mathcal{T}-S_\mathcal{T} u_\mathcal{T}\|_{0,\Omega}+C\|S^*_{\tilde{\mathcal{T}}}(y_\mathcal{T}-y_d)-S^*_\mathcal{T}(y_\mathcal{T}-y_d)\|_{0,\Omega}.\label{p_error_k}
\end{eqnarray}
Note that $S_\mathcal{T} u_\mathcal{T}$ and $S_{\tilde{\mathcal{T}}}u_\mathcal{T}$ are the finite element approximations of $Su_\mathcal{T}$ on $V_\mathcal{T}$ and $V_{\tilde{\mathcal{T}}}$ associated with partitions $\mathcal{T}$ and $\tilde{\mathcal{T}}$, respectively. Similarly,  $S^*_\mathcal{T}(y_\mathcal{T}-y_d)$ and $S^*_{\tilde{\mathcal{T}}}(y_\mathcal{T}-y_d)$ are the finite element approximations of $S^*(y_\mathcal{T}-y_d)$ on $V_\mathcal{T}$ and $V_{\tilde{\mathcal{T}}}$. Then from Lemma 2 in \cite{Demlow} we conclude that
\begin{eqnarray}
\|S_\mathcal{T}u_\mathcal{T}-S_{\tilde{\mathcal{T}}}u_\mathcal{T}\|_{0,\Omega}\leq C\eta_{\mathcal{T},y} (u_\mathcal{T}, S_\mathcal{T}u_\mathcal{T},\mathcal{R}_{\mathcal{T}\rightarrow \tilde{\mathcal{T}}}),\label{localized_y}\\
\|S_\mathcal{T}^*(y_\mathcal{T}-y_d)-S_{\tilde{\mathcal{T}}}^*(y_\mathcal{T}-y_d)\|_{0,\Omega}\leq C\eta_{\mathcal{T},p} (y_\mathcal{T}-y_d, S_\mathcal{T}^*(y_\mathcal{T}-y_d),\mathcal{R}_{\mathcal{T}\rightarrow \tilde{\mathcal{T}}}).\label{localized_p}
\end{eqnarray}
Combining (\ref{u_error_k}), (\ref{p_error_k}), (\ref{localized_y}) and (\ref{localized_p}) we finish the proof.
\end{proof}

To bound the number of marked elements in D\"orfler's marking we give the following lemma. \begin{Lemma}\label{Lemma:13}
Let $\theta<\frac{1}{C_2(1+C_4)}$. Let $\mu$ be sufficiently small such that Lemma \ref{Lemma:15} is valid. If 
\begin{eqnarray}
&&\|u-u_\mathcal{T}\|_{0,\Omega}+\|y-y_\mathcal{T}\|_{0,\Omega}+\|p-p_\mathcal{T}\|_{0,\Omega}+{\rm osc}_{\mathcal{T}}\nonumber\\
&\leq&(1-\theta C_2(1+C_4))(\|u-u_k\|_{0,\Omega}+\|y-y_k\|_{0,\Omega}+\|p-p_k\|_{0,\Omega}+{\rm osc}_k)
\end{eqnarray}
holds for $\mathcal{T}_k\in \mathbb{T}_\mu$ and $\mathcal{T}_k\subset\mathcal{T}\in \mathbb{T}$, then we have
\begin{eqnarray}
\eta_k(\mathcal{R}_{\mathcal{T}_k\rightarrow\mathcal{T}})\geq \theta\eta_k(\mathcal{T}_k).
\end{eqnarray}
\end{Lemma}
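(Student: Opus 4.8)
The plan is to run the standard \emph{optimal marking} argument of \cite{Cascon,Demlow}: combine the global lower bound (efficiency) of Theorem~\ref{Thm:1} with the localized upper bound of Lemma~\ref{Lemma:15} to show that a sufficient decrease of the total error forces the refined set to satisfy D\"orfler's criterion. Abbreviate the total error by
\[
\mathcal{E}_k:=\|u-u_k\|_{0,\Omega}+\|y-y_k\|_{0,\Omega}+\|p-p_k\|_{0,\Omega}+{\rm osc}_k,
\]
and write $\mathcal{E}_{\mathcal{T}}$ for the analogous quantity on $\mathcal{T}$, so that the hypothesis reads $\mathcal{E}_{\mathcal{T}}\le (1-\theta C_2(1+C_4))\mathcal{E}_k$. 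The whole proof reduces to establishing the single inequality $\mathcal{E}_k-\mathcal{E}_{\mathcal{T}}\le (1+C_4)\,\eta_k(\mathcal{R}_{\mathcal{T}_k\rightarrow\mathcal{T}})$ and then chaining it with efficiency.

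First I would invoke the global lower bound of Theorem~\ref{Thm:1} on the partition $\mathcal{T}_k$, which gives $\eta_k(\mathcal{T}_k)\le C_2\mathcal{E}_k$, i.e. $\mathcal{E}_k\ge C_2^{-1}\eta_k(\mathcal{T}_k)$. To control the drop $\mathcal{E}_k-\mathcal{E}_{\mathcal{T}}$ I split it into the three solution norms and the oscillation. For the norms, the reverse triangle inequality yields $\|u-u_k\|_{0,\Omega}-\|u-u_\mathcal{T}\|_{0,\Omega}\le\|u_k-u_\mathcal{T}\|_{0,\Omega}$ and likewise for $y$ and $p$; summing these and applying Lemma~\ref{Lemma:15} with $\mathcal{T}_k$ as the coarse mesh and $\mathcal{T}$ as its refinement bounds this contribution by exactly $C_4\,\eta_k(\mathcal{R}_{\mathcal{T}_k\rightarrow\mathcal{T}})$.

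The genuinely delicate part is the oscillation difference ${\rm osc}_k-{\rm osc}_{\mathcal{T}}$, which I expect to be the main obstacle, since ${\rm osc}_k$ and ${\rm osc}_{\mathcal{T}}$ are built on \emph{different} meshes as well as different discrete triples. I would split $\mathcal{T}_k=\mathcal{R}_{\mathcal{T}_k\rightarrow\mathcal{T}}\cup(\mathcal{T}_k\cap\mathcal{T})$. On the refined elements, the elementwise dominance ${\rm osc}_k(T)\le\eta_k(T)$, which is immediate from the definitions (\ref{local_state})--(\ref{data_osc}), gives $\sum_{T\in\mathcal{R}_{\mathcal{T}_k\rightarrow\mathcal{T}}}{\rm osc}_k^2(T)\le\eta_k^2(\mathcal{R}_{\mathcal{T}_k\rightarrow\mathcal{T}})$. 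On the unrefined elements $T\in\mathcal{T}_k\cap\mathcal{T}$ the two meshes coincide and both $(y_k,p_k)$ and $(y_\mathcal{T},p_\mathcal{T})$ are affine on $T$, so the sublinearity (\ref{osc_linear}) lets me replace $(u_k,y_k,p_k)$ by $(u_\mathcal{T},y_\mathcal{T},p_\mathcal{T})$ at the cost of the oscillations of $(u_k-u_\mathcal{T})-\mathcal{L}(y_k-y_\mathcal{T})$ and $(y_k-y_\mathcal{T})-\mathcal{L}^*(p_k-p_\mathcal{T})$. Exactly as in the proof of Lemma~\ref{Lemma:3}, and using an inverse estimate on these affine differences, each such residual oscillation carries a factor $h_T\le h_0$ (together with a $\mu$-weighted gradient term), so it is of higher order and sums to $\lesssim (h_0+\mu)\big(\|u_k-u_\mathcal{T}\|_{0,\Omega}+\|y_k-y_\mathcal{T}\|_{0,\Omega}+\|p_k-p_\mathcal{T}\|_{0,\Omega}\big)\lesssim (h_0+\mu)\,\eta_k(\mathcal{R}_{\mathcal{T}_k\rightarrow\mathcal{T}})$ by Lemma~\ref{Lemma:15} again. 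Collecting the two contributions and absorbing the higher-order perturbation (this is where the smallness of $\mu$, already required for Lemma~\ref{Lemma:15}, and $h_0\ll1$ are used) yields ${\rm osc}_k\le{\rm osc}_{\mathcal{T}}+\eta_k(\mathcal{R}_{\mathcal{T}_k\rightarrow\mathcal{T}})$.

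Finally I would combine the pieces. The two estimates give $\mathcal{E}_k-\mathcal{E}_{\mathcal{T}}\le(1+C_4)\,\eta_k(\mathcal{R}_{\mathcal{T}_k\rightarrow\mathcal{T}})$, while the hypothesis rearranges to $\mathcal{E}_k-\mathcal{E}_{\mathcal{T}}\ge\theta C_2(1+C_4)\,\mathcal{E}_k$. Using $\mathcal{E}_k\ge C_2^{-1}\eta_k(\mathcal{T}_k)$ from efficiency I obtain
\[
(1+C_4)\,\eta_k(\mathcal{R}_{\mathcal{T}_k\rightarrow\mathcal{T}})\ge \theta C_2(1+C_4)\,\mathcal{E}_k\ge \theta C_2(1+C_4)\,C_2^{-1}\eta_k(\mathcal{T}_k)=\theta(1+C_4)\,\eta_k(\mathcal{T}_k),
\]
and dividing by $1+C_4$ proves the claim. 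The real difficulty lies in the oscillation comparison on the unrefined region: keeping the constant in ${\rm osc}_k\le{\rm osc}_{\mathcal{T}}+\eta_k(\mathcal{R}_{\mathcal{T}_k\rightarrow\mathcal{T}})$ equal to one—rather than a generic constant that would corrupt the precise factor $C_2(1+C_4)$ in the hypothesis—is exactly what forces the careful use of (\ref{osc_linear}), the affineness of the discrete states on unrefined elements, and the smallness of $\mu$ and $h_0$.
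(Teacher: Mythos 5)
Your proposal is correct and takes essentially the same route as the paper's proof: the global lower bound of Theorem~\ref{Thm:1}, the (reverse) triangle inequality for the three $L^2$ errors combined with the localized upper bound of Lemma~\ref{Lemma:15}, the refined/unrefined splitting of the oscillation with the dominance ${\rm osc}_k(\mathcal{R}_{\mathcal{T}_k\rightarrow\mathcal{T}})\le\eta_k(\mathcal{R}_{\mathcal{T}_k\rightarrow\mathcal{T}})$, and the identical final chain of inequalities. Where you differ is only in degree of care: the paper disposes of the unrefined region by asserting ${\rm osc}_k\le{\rm osc}_k(\mathcal{R}_{\mathcal{T}_k\rightarrow\mathcal{T}})+{\rm osc}_{\mathcal{T}}$ ``from the triangle inequality,'' silently ignoring that ${\rm osc}_k$ and ${\rm osc}_{\mathcal{T}}$ are built from different discrete triples $(u_k,y_k,p_k)$ and $(u_\mathcal{T},y_\mathcal{T},p_\mathcal{T})$, which is precisely the mismatch your perturbation argument (via (\ref{osc_linear}), inverse estimates on the affine differences, and Lemma~\ref{Lemma:15}) addresses; your own caveat is accurate---the absorption yields a factor $1+C(h_0+\mu)$ rather than exactly $1$---but this only inflates $C_4$ by an arbitrarily small amount, and the lemma (and its use in Lemma~\ref{Lemma:14}) survives with the constants so adjusted.
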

\begin{proof}
From the triangle inequality we have
\begin{eqnarray}
\|u-u_k\|_{0,\Omega}+\|y-y_k\|_{0,\Omega}+\|p-p_k\|_{0,\Omega}&\leq& \|u_\mathcal{T}-u_k\|_{0,\Omega}+\|y_\mathcal{T}-y_k\|_{0,\Omega}+\|p_\mathcal{T}-p_k\|_{0,\Omega}\nonumber\\
&&+\|u-u_\mathcal{T}\|_{0,\Omega}+\|y-y_\mathcal{T}\|_{0,\Omega}+\|p-p_\mathcal{T}\|_{0,\Omega},\nonumber\\
{\rm osc}_k&\leq&{\rm osc}_k(\mathcal{R}_{\mathcal{T}_k\rightarrow\mathcal{T}})+{\rm osc}_{\mathcal{T}}.\nonumber
\end{eqnarray}
It follows from Theorem \ref{Thm:1}, Lemma \ref{Lemma:15} and the assumption of the lemma that
\begin{eqnarray}
\theta(C_4+1)\eta_k(\mathcal{T}_k)&\leq& \theta C_2(1+C_4)(\|u-u_k\|_{0,\Omega}+\|y-y_k\|_{0,\Omega}+\|p-p_k\|_{0,\Omega}+{\rm osc}_k)\nonumber\\
&\leq&\|u-u_k\|_{0,\Omega}+\|y-y_k\|_{0,\Omega}+\|p-p_k\|_{0,\Omega}+{\rm osc}_k\nonumber\\
&&-\|u-u_\mathcal{T}\|_{0,\Omega}-\|y-y_\mathcal{T}\|_{0,\Omega}-\|p-p_\mathcal{T}\|_{0,\Omega}-{\rm osc}_{\mathcal{T}}\nonumber\\
&\leq&\|u_\mathcal{T}-u_k\|_{0,\Omega}+\|y_\mathcal{T}-y_k\|_{0,\Omega}+\|p_\mathcal{T}-p_k\|_{0,\Omega}+{\rm osc}_k(\mathcal{R}_{\mathcal{T}_k\rightarrow\mathcal{T}})\nonumber\\
&\leq&(1+C_4)\eta_k(\mathcal{R}_{\mathcal{T}_k\rightarrow\mathcal{T}}),\nonumber
\end{eqnarray}
this completes the proof.
\end{proof}

\begin{Lemma}\label{Lemma:14}
 For any $s>0$, let $(y,p)\in\mathcal{A}^s$. Assume that $\mu$ is sufficiently small such that Theorem \ref{Thm:2} is valid. Then under the assumptions of Lemma \ref{Lemma:13}, the number of marked elements $\mathcal{M}_k\subset\mathcal{T}_k$ defined in Algorithm \ref{Alg:3.0} satisfies
\begin{eqnarray}
\#\mathcal{M}_k\lesssim |(y,p)|_{\mathcal{A}^s}^{1\over s}(\|u-u_k\|_{0,\Omega}+\|y-y_k\|_{0,\Omega}+\|p-p_k\|_{0,\Omega}+{\rm osc}_{k})^{-{1\over s}}.\label{complexity}
\end{eqnarray}
\end{Lemma}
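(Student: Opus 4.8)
The plan is to run the standard comparison argument of \cite{Cascon,Stevenson}, adapted to the graded-mesh, $L^2$-norm framework of \cite{Demlow}. The idea is to use the approximation class $\mathcal{A}^s$ to produce a refinement $\mathcal{T}$ of $\mathcal{T}_k$ whose total error already falls below the threshold required in the hypothesis of Lemma \ref{Lemma:13}; the set $\mathcal{R}_{\mathcal{T}_k\rightarrow\mathcal{T}}$ of refined elements then satisfies D\"orfler's criterion on $\mathcal{T}_k$, and since $\mathcal{M}_k$ is chosen of minimal cardinality among D\"orfler sets, $\#\mathcal{M}_k\le\#\mathcal{R}_{\mathcal{T}_k\rightarrow\mathcal{T}}$ is bounded by the number of elements added in passing to $\mathcal{T}$.

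Concretely, write $e_k:=\|u-u_k\|_{0,\Omega}+\|y-y_k\|_{0,\Omega}+\|p-p_k\|_{0,\Omega}+{\rm osc}_k$ for the current total error and set the tolerance $\varepsilon:=\frac{1-\theta C_2(1+C_4)}{C_5}\,e_k$, where $C_5$ is the constant from Remark \ref{Rem:1} and $\theta<\frac{1}{C_2(1+C_4)}$ as in Lemma \ref{Lemma:13}, so that the prefactor is positive. By the definition of $|(y,p)|_{\mathcal{A}^s}$ there is a partition $\mathcal{T}_\varepsilon\in\mathbb{T}$ whose combined best-approximation-plus-oscillation quantity is at most $\varepsilon$ and with $\#\mathcal{T}_\varepsilon-\#\mathcal{T}_0\lesssim|(y,p)|_{\mathcal{A}^s}^{1/s}\varepsilon^{-1/s}$.

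Next I would form the overlay $\mathcal{T}:=\mathcal{T}_k\oplus\mathcal{T}_\varepsilon$. Since $\mathcal{T}$ refines $\mathcal{T}_\varepsilon$, monotonicity of both the $L^2$ best approximation and the data oscillations under refinement shows that the $\mathcal{A}^s$-quantity evaluated on $\mathcal{T}$ is still $\le\varepsilon$; hence Remark \ref{Rem:1} gives $\|u-u_\mathcal{T}\|_{0,\Omega}+\|y-y_\mathcal{T}\|_{0,\Omega}+\|p-p_\mathcal{T}\|_{0,\Omega}+{\rm osc}_\mathcal{T}\le C_5\varepsilon=(1-\theta C_2(1+C_4))\,e_k$, which is exactly the hypothesis of Lemma \ref{Lemma:13}. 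That lemma then yields $\eta_k(\mathcal{R}_{\mathcal{T}_k\rightarrow\mathcal{T}})\ge\theta\,\eta_k(\mathcal{T}_k)$, so $\mathcal{R}_{\mathcal{T}_k\rightarrow\mathcal{T}}$ is a D\"orfler set and minimality of $\mathcal{M}_k$ gives $\#\mathcal{M}_k\le\#\mathcal{R}_{\mathcal{T}_k\rightarrow\mathcal{T}}$. Combining the overlay cardinality bound $\#\mathcal{T}-\#\mathcal{T}_k\le\#\mathcal{T}_\varepsilon-\#\mathcal{T}_0$ (see \cite{Cascon,Stevenson}) with the choice of $\varepsilon$ gives $\#\mathcal{M}_k\lesssim|(y,p)|_{\mathcal{A}^s}^{1/s}\varepsilon^{-1/s}\simeq|(y,p)|_{\mathcal{A}^s}^{1/s}e_k^{-1/s}$, which is the claim.

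The main obstacle is the mesh grading: Remark \ref{Rem:1} (equivalently Theorem \ref{Thm:2}) is only available on sufficiently mildly graded partitions $\mathbb{T}_\mu$, whereas the overlay of $\mathcal{T}_k\in\mathbb{T}_\mu$ with an arbitrary $\mathcal{T}_\varepsilon\in\mathbb{T}$ need not itself lie in $\mathbb{T}_\mu$. I would resolve this exactly as in \cite[Appendix A]{Demlow}, by applying the grading procedure to $\mathcal{T}$ to obtain a refinement in $\mathbb{T}_\mu$ whose cardinality is inflated by at most a fixed multiple depending only on $\mu$, so that both the error estimate above and the cardinality count survive up to constants. A secondary technical point is the monotonicity under refinement of the oscillation terms ${\rm osc}(v_\mathcal{T}-y_d,\mathcal{T})$, ${\rm osc}(\mathcal{L}v_\mathcal{T},\mathcal{T})$ and their adjoint counterparts, used to pass from $\mathcal{T}_\varepsilon$ to $\mathcal{T}$; this holds because refinement shrinks the local mesh sizes and enlarges the piecewise-constant projection space, so each oscillation contribution can only decrease.
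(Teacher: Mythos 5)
Your overall strategy --- choosing $\varepsilon=\frac{1-\theta C_2(1+C_4)}{C_5}\,e_k$ with $e_k$ the total error, extracting a partition from the definition of $|(y,p)|_{\mathcal{A}^s}$, passing to a common refinement with $\mathcal{T}_k$, invoking Remark \ref{Rem:1} to verify the hypothesis of Lemma \ref{Lemma:13}, and then using minimality of $\mathcal{M}_k$ together with an overlay cardinality bound --- is exactly the paper's. The genuine gap is in how you handle the mesh grading, and your proposed fix does not work. You form the overlay $\mathcal{T}_k\oplus\mathcal{T}_\varepsilon$ first and then grade it, claiming the cardinality is ``inflated by at most a fixed multiple'' so that ``the cardinality count survives.'' But the bound supplied by \cite[Appendix A]{Demlow} for producing a partition in $\mathbb{T}_\mu$ is of the form $\#\tilde{\mathcal{T}}-\#\mathcal{T}_0\lesssim \#(\mathcal{T}_k\oplus\mathcal{T}_\varepsilon)-\#\mathcal{T}_0$: the inflation is proportional to \emph{all} elements created since $\mathcal{T}_0$, not just to those newly added on top of $\mathcal{T}_k$. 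What the lemma needs is $\#\mathcal{M}_k\leq \#\mathcal{R}_{\mathcal{T}_k\to\tilde{\mathcal{T}}}\leq \#\tilde{\mathcal{T}}-\#\mathcal{T}_k\lesssim \#\mathcal{T}_\varepsilon-\#\mathcal{T}_0$, and with your ordering the available estimate leaves an extra term of order $\#\mathcal{T}_k-\#\mathcal{T}_0$ (times a constant exceeding one) on the right-hand side. That term is not controlled by $e_k^{-1/s}$, and it is fatal: if $\#\mathcal{M}_k$ were allowed to be comparable to $\#\mathcal{T}_k$ at every step, the meshes would grow geometrically regardless of $s$ and quasi-optimality would fail. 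Moreover, grading (like completion for newest-vertex bisection) admits only \emph{amortized} cardinality bounds; a per-step bound localized to the newly refined elements is false in general, so the claim cannot be repaired as stated.

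The paper performs the two operations in the opposite order, and that ordering is precisely what makes the count close. It first refines $\mathcal{T}'$ (your $\mathcal{T}_\varepsilon$) to $\mathcal{T}''\in\mathbb{T}_\mu$ at cost $\#\mathcal{T}''-\#\mathcal{T}_0\lesssim \#\mathcal{T}'-\#\mathcal{T}_0$; this inflation is harmless because $\#\mathcal{T}'-\#\mathcal{T}_0$ is already the quantity being bounded by $|(y,p)|_{\mathcal{A}^s}^{1/s}\varepsilon^{-1/s}$. It then sets $\mathcal{T}:=\mathcal{T}''\oplus\mathcal{T}_k$, using the fact from \cite{Demlow} that the smallest common refinement of two partitions in $\mathbb{T}_\mu$ lies in $\mathbb{T}_\mu$ (so Remark \ref{Rem:1}, i.e.\ Theorem \ref{Thm:2}, and Lemma \ref{Lemma:13} are applicable on $\mathcal{T}$), together with the exact overlay estimate $\#\mathcal{T}-\#\mathcal{T}_k\leq \#\mathcal{T}''-\#\mathcal{T}_0$ from \cite{Stevenson}, which involves no inflation of the $\mathcal{T}_k$-part at all. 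With this grade-then-overlay ordering, $\#\mathcal{M}_k\leq \#\mathcal{T}-\#\mathcal{T}_k\lesssim \#\mathcal{T}'-\#\mathcal{T}_0\lesssim |(y,p)|_{\mathcal{A}^s}^{1/s}\varepsilon^{-1/s}\simeq |(y,p)|_{\mathcal{A}^s}^{1/s}e_k^{-1/s}$; the rest of your argument then coincides with the paper's proof.
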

\begin{proof}
Let 
\begin{eqnarray}
\varepsilon =\frac{1-\theta C_2(1+C_4)}{C_5}(\|u-u_k\|_{0,\Omega}+\|y-y_k\|_{0,\Omega}+\|p-p_k\|_{0,\Omega}+{\rm osc}_{k}),
\end{eqnarray}
where $C_5$ is defined in Remark \ref{Rem:1}. Let $\mathcal{T}'\in \mathbb{T}$ and $  y_{\mathcal{T}'},  p_{\mathcal{T}'}\in V_{\mathcal{T}'}$ such that
\begin{eqnarray}
&&\|y-  y_{\mathcal{T}'}\|_{0,\Omega}+{\rm osc}(y_{\mathcal{T}'}-y_d,\mathcal{T}')+{\rm osc}(\mathcal{L}y_{\mathcal{T}'},\mathcal{T}')\nonumber\\
&&+\|p-  p_{\mathcal{T}'}\|_{0,\Omega}+{\rm osc}(f+P_{[a,b]}(p_{\mathcal{T}'}),\mathcal{T}')+{\rm osc}(\mathcal{L}^*p_{\mathcal{T}'},\mathcal{T}')\leq \varepsilon.
\end{eqnarray}
We can conclude from the definition of $\mathcal{A}^s$ that 
\begin{eqnarray}
\#\mathcal{T}'-\#\mathcal{T}_0\lesssim |(y,p)|_{\mathcal{A}^s}^{1\over s}\varepsilon^{-{1\over s}}.\label{comp_1}
\end{eqnarray}
It can be shown from Appendix A in \cite{Demlow} that $\mathcal{T}'$ can be refined to a partition $\mathcal{T}''\in \mathbb{T}_\mu$ with $\#\mathcal{T}''-\#\mathcal{T}_0\lesssim \#\mathcal{T}'-\#\mathcal{T}_0$ depending on $\mu$. Let $\mathcal{T}:=\mathcal{T}''\oplus \mathcal{T}_k$ be the smallest common refinement of $\mathcal{T}''$ and $\mathcal{T}_k$ in $\mathbb{T}_\mu$, there holds $\#\mathcal{T}-\#\mathcal{T}_k\leq \#\mathcal{T}''-\#\mathcal{T}_0$ (see \cite{Stevenson}). In view of $V_{\mathcal{T}'}\subset V_\mathcal{T}$, the monotonicity of data oscillation (\cite[Remark 2.1]{Cascon}) and Remark \ref{Rem:1} we have
\begin{eqnarray}
&&\|u-u_\mathcal{T}\|_{0,\Omega}+\|y-y_\mathcal{T}\|_{0,\Omega}+\|p-p_\mathcal{T}\|_{0,\Omega}+{\rm osc}_{\mathcal{T}}\nonumber\\
&\leq& C_5\Big(\inf\limits_{v_{\mathcal{T}'}\in V_{\mathcal{T}'}}(\|y-v_{\mathcal{T}'}\|_{0,\Omega}+{\rm osc}(v_{\mathcal{T}'}-y_d,\mathcal{T}')+{\rm osc}(\mathcal{L}v_{\mathcal{T}'},\mathcal{T}'))\nonumber\\
&&+\inf\limits_{w_{\mathcal{T}'}\in V_{\mathcal{T}'}}(\|p-w_{\mathcal{T}'}\|_{0,\Omega}+{\rm osc}(f+P_{[a,b]}(w_{\mathcal{T}'}),\mathcal{T}')+{\rm osc}(\mathcal{L}^*w_{\mathcal{T}'},\mathcal{T}'))\Big)\nonumber\\
&\leq&(1-\theta C_2(1+C_4))(\|u-u_k\|_{0,\Omega}+\|y-y_k\|_{0,\Omega}+\|p-p_k\|_{0,\Omega}+{\rm osc}_{k}),
\end{eqnarray}
so we can conclude from Lemma \ref{Lemma:13} that $\eta_k(\mathcal{R}_{\mathcal{T}_k\rightarrow\mathcal{T}})\geq \theta\eta_k(\mathcal{T}_k)$. Note that Algorithm \ref{Alg:3.0} selects a minimal set $\mathcal{M}_k$ of $\mathcal{T}_k$ with $\eta_k(\mathcal{M}_k)\geq \theta \eta_k(\mathcal{T}_k)$. Thus, 
\begin{eqnarray}
\#\mathcal{M}_k\leq \#\mathcal{R}_{\mathcal{T}_k\rightarrow\mathcal{T}}\leq \#\mathcal{T}-\#\mathcal{T}_k\leq \#\mathcal{T}''-\#\mathcal{T}_0\lesssim \#\mathcal{T}'-\#\mathcal{T}_0.
\end{eqnarray}
Now (\ref{complexity}) follows from (\ref{comp_1}). 
\end{proof}

Now we are in the position to present our final result on the quasi-optimality of AFEM for solving optimal control problems.
\begin{Theorem}
Let $(u,y,p)\in  U_{ad}\times H_0^1(\Omega)\times H_0^1(\Omega)$ be the solution of optimal control problem (\ref{OCP})-(\ref{OCP_state}) and $(u_n,y_n,p_n) \in U_{ad}\times  V_n\times V_n$ be the solution of the discrete problem (\ref{OCP_state_h})-(\ref{OCP_adjoint_h}) generated by the adaptive Algorithm \ref{Alg:3.1}. For some $s>0$, let $(y,p)\in\mathcal{A}^s$. Under the assumptions of Lemma \ref{Lemma:13} there holds that
\begin{eqnarray}
\|u-u_n\|_{0,\Omega}+\|y-y_n\|_{0,\Omega}+\|p-p_n\|_{0,\Omega}+{\rm osc}_n\lesssim  |(y,p)|_{\mathcal{A}^s}(\#\mathcal{T}_n-\#\mathcal{T}_0)^{-s}\label{quasi_complexity}
\end{eqnarray}
provided $h_0\ll 1$.
\end{Theorem}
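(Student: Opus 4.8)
The plan is to follow the now-standard route of Cascón--Kreuzer--Nochetto--Siebert \cite{Cascon}, assembling three ingredients that are already available: the optimal marking estimate of Lemma~\ref{Lemma:14}, the refinement-complexity bound of \cite[Theorem~4]{Demlow}, and the linear convergence of Theorem~\ref{Thm:4}. Throughout I would abbreviate the total error on generation $k$ by $\mathcal{E}_k:=\|u-u_k\|_{0,\Omega}+\|y-y_k\|_{0,\Omega}+\|p-p_k\|_{0,\Omega}+{\rm osc}_k$, so that the assertion to be proved reads $\mathcal{E}_n\lesssim |(y,p)|_{\mathcal{A}^s}(\#\mathcal{T}_n-\#\mathcal{T}_0)^{-s}$.

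First I would invoke Lemma~\ref{Lemma:14}: because the Dörfler threshold satisfies $\theta<1/(C_2(1+C_4))$, the grading parameter $\mu$ is small enough for Theorem~\ref{Thm:2} and Remark~\ref{Rem:1} to apply, and $(y,p)\in\mathcal{A}^s$, the marked set produced at each step obeys $\#\mathcal{M}_k\lesssim |(y,p)|_{\mathcal{A}^s}^{1/s}\,\mathcal{E}_k^{-1/s}$. Summing over $k=0,\dots,n-1$ and applying the refinement-complexity estimate \cite[Theorem~4]{Demlow} — which guarantees that the grading-preserving bisection inflates the cardinality by at most a fixed multiple of the total number of marked simplices — yields
\[
\#\mathcal{T}_n-\#\mathcal{T}_0\ \lesssim\ \sum_{k=0}^{n-1}\#\mathcal{M}_k\ \lesssim\ |(y,p)|_{\mathcal{A}^s}^{1/s}\sum_{k=0}^{n-1}\mathcal{E}_k^{-1/s}.
\]

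The remaining step is to collapse the sum $\sum_{k=0}^{n-1}\mathcal{E}_k^{-1/s}$ to its last term by linear convergence. Theorem~\ref{Thm:4} supplies $\nu\in(0,1)$ with $\mathcal{E}_n\lesssim\nu^{\,n-1-k}\mathcal{E}_k$ for $0\le k\le n-1$, hence $\mathcal{E}_k^{-1/s}\lesssim\nu^{(n-1-k)/s}\mathcal{E}_n^{-1/s}$. Since $s>0$ and $\nu<1$, the geometric series $\sum_{j\ge 0}(\nu^{1/s})^{j}$ converges to $1/(1-\nu^{1/s})$, so $\sum_{k=0}^{n-1}\mathcal{E}_k^{-1/s}\lesssim\mathcal{E}_n^{-1/s}$ with a constant depending only on $\nu$ and $s$. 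Combining this with the previous display gives $\#\mathcal{T}_n-\#\mathcal{T}_0\lesssim|(y,p)|_{\mathcal{A}^s}^{1/s}\mathcal{E}_n^{-1/s}$, and raising both sides to the power $-s$ produces the claimed bound.

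The main obstacle is not this concatenation, which is essentially bookkeeping, but ensuring that all prerequisite constants are mutually compatible: the threshold $\theta$ must respect the smallness from Lemma~\ref{Lemma:13}, while $\mu$ and $h_0$ must simultaneously be small enough for the contraction of Theorem~\ref{Thm:4}, the equivalence of Lemma~\ref{Lemma:11}, and the quasi-best-approximation estimate of Theorem~\ref{Thm:2}/Remark~\ref{Rem:1} to be in force. The one genuinely delicate point I would double-check is the refinement-complexity step: because the meshes must stay in the graded class $\mathbb{T}_\mu$, the bisection routine refines elements beyond $\mathcal{M}_k$, and one must rely on \cite[Appendix~A]{Demlow} to confirm that this over-refinement multiplies $\sum_k\#\mathcal{M}_k$ by only a $\mu$-dependent constant, so that the cardinality estimate — and therefore the optimal rate $s$ — is preserved.
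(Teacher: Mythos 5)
Your proposal is correct and follows essentially the same route as the paper's own proof: bound $\#\mathcal{T}_n-\#\mathcal{T}_0$ by $\sum_{k}\#\mathcal{M}_k$ via the refinement-complexity lemma, apply Lemma~\ref{Lemma:14} to each $\#\mathcal{M}_k$, collapse the resulting sum by the geometric decay from the contraction theorem, and raise to the power $-s$. Your only (harmless) deviation is citing Theorem~\ref{Thm:4} for the decay of the total error plus oscillation, which is in fact the more precise reference, since the paper nominally cites Theorem~\ref{Thm:3} while actually using the inequality of Theorem~\ref{Thm:4}.
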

\begin{proof}
It follows from Lemmas \ref{Lemma:15} and \ref{Lemma:14} that 
\begin{eqnarray}
\#\mathcal{T}_n-\#\mathcal{T}_0&\lesssim& \sum\limits_{k=0}^{n-1}\#\mathcal{M}_k\nonumber\\
&\lesssim&|(y,p)|_{\mathcal{A}^s}^{1\over s}\sum\limits_{k=0}^{n-1}(\|u-u_k\|_{0,\Omega}+\|y-y_k\|_{0,\Omega}+\|p-p_k\|_{0,\Omega}+{\rm osc}_{k})^{-{1\over s}}.
\end{eqnarray}
Due to Theorem \ref{Thm:3} we obtain for $0\leq k<n$ that
\begin{eqnarray}
&&\|u-u_{n}\|_{0,\Omega}+\|y-y_{n}\|_{0,\Omega}+\|p-p_{n}\|_{0,\Omega}+{\rm osc}_{n}\nonumber\\
&\lesssim& \nu^{n-k}\Big(\|u-u_{k}\|_{0,\Omega}+\|y-y_{k}\|_{0,\Omega}+\|p-p_{k}\|_{0,\Omega}+{\rm osc}_{k}\Big).\nonumber
\end{eqnarray}
Thus, 
\begin{eqnarray}
\#\mathcal{T}_n-\#\mathcal{T}_0&\lesssim&|(y,p)|_{\mathcal{A}^s}^{1\over s}(\|u-u_n\|_{0,\Omega}+\|y-y_n\|_{0,\Omega}+\|p-p_n\|_{0,\Omega}+{\rm osc}_{n})^{-{1\over s}}\sum\limits_{k=0}^{n-1}\nu^{\frac{n-k}{s}}\nonumber\\
&\lesssim &|(y,p)|_{\mathcal{A}^s}^{1\over s}(\|u-u_n\|_{0,\Omega}+\|y-y_n\|_{0,\Omega}+\|p-p_n\|_{0,\Omega}+{\rm osc}_{n})^{-{1\over s}},
\end{eqnarray}
where the last inequality holds due to $\nu<1$. This completes the proof of the theorem.
\end{proof}

%%%%%%%%%%%%%%%%%%%%%%%%%%%%%%%%%%%%%%%%%%%%%%%%
\section{Numerical experiments}
\setcounter{equation}{0}
In this section we carry out some numerical tests in two dimensions to support our theoretical results obtained in this paper. As indicated in \cite{Demlow}, the additional refinement of elements to ensure the mesh grading property (\ref{mesh_grading}) seems to be not necessary in practical computations to deliver optimal convergence of $L^2$-norm based AFEM. So in current paper we use the practical bisection algorithm without additional refinement, similar phenomenon can be observed for the optimal convergence. Moreover, in the following examples we set $\mathcal{L}=-\Delta$.

\begin{Example}\label{Exm:1}
We consider an example defined on $\Omega=(0,1)^2$. We set $\alpha = 0.1$  and choose the exact solutions as 
\begin{eqnarray}
y&=&\arctan(-50x_1+100x_2-25),\nonumber\\
p&=&16x_1(1-x_1)x_2(1-x_2)(1+\arctan(200({1\over 16}-(x_1-0.5)^2-(x_2-0.5)^2))),\nonumber\\
u&=&\max\{-5,\min\{-1,-{p\over \alpha}\}\},\nonumber
\end{eqnarray}
the corresponding $f$ and $y_d$ can be obtained after simple calculation. Note that we impose inhomogeneous Dirichlet boundary condition for the state $y$ and our theoretical analysis applies to this case after some simple adaptations .
\end{Example}

In Figure \ref{fig:1} we plot the profiles of the state and adjoint state variables on adaptively refined mesh with $\theta=0.3$ and $20$ adaptive loops. Although the solutions are smooth, larger gradients can be observed in certain areas so the adaptive finite element method may deliver much smaller errors compared to the uniform refinement. In Figure \ref{fig:2} and \ref{fig:3} we show the adaptive meshes after $15$, $20$ and $25$ adaptive loops with D\"orfler's marking parameter $\theta=0.3$. We can see that the meshes concentrate on the areas where the solutions have larger gradients. Moreover, in Figure \ref{fig:3} we plot the active sets of both the continuous control, the discrete controls by using variational control discretization and piecewise linear and continuous finite element approximations. We can observe that the active set of variational discretized control is more close to the continuous one  compared with full control discretization, this shows the superiority of variational control discretization. In Figure \ref{fig:4} we give the comparisons of convergence history of the $L^2$-norm errors of the optimal control, the state and adjoint state and the error estimators on uniformly refined meshes ($\theta$=1) and adaptively refined meshes with $\theta =0.3$ and $\theta=0.4$, respectively. Although optimal convergence of second order in $L^2$-norm can be observed for both the uniform refinement and adaptive refinement, we have smaller errors for adaptive algorithm which shows the power of AFEM for problems even with $H^2$-regularity.

\begin{figure}[ht]
\centering
\includegraphics[width=7cm,height=7cm]{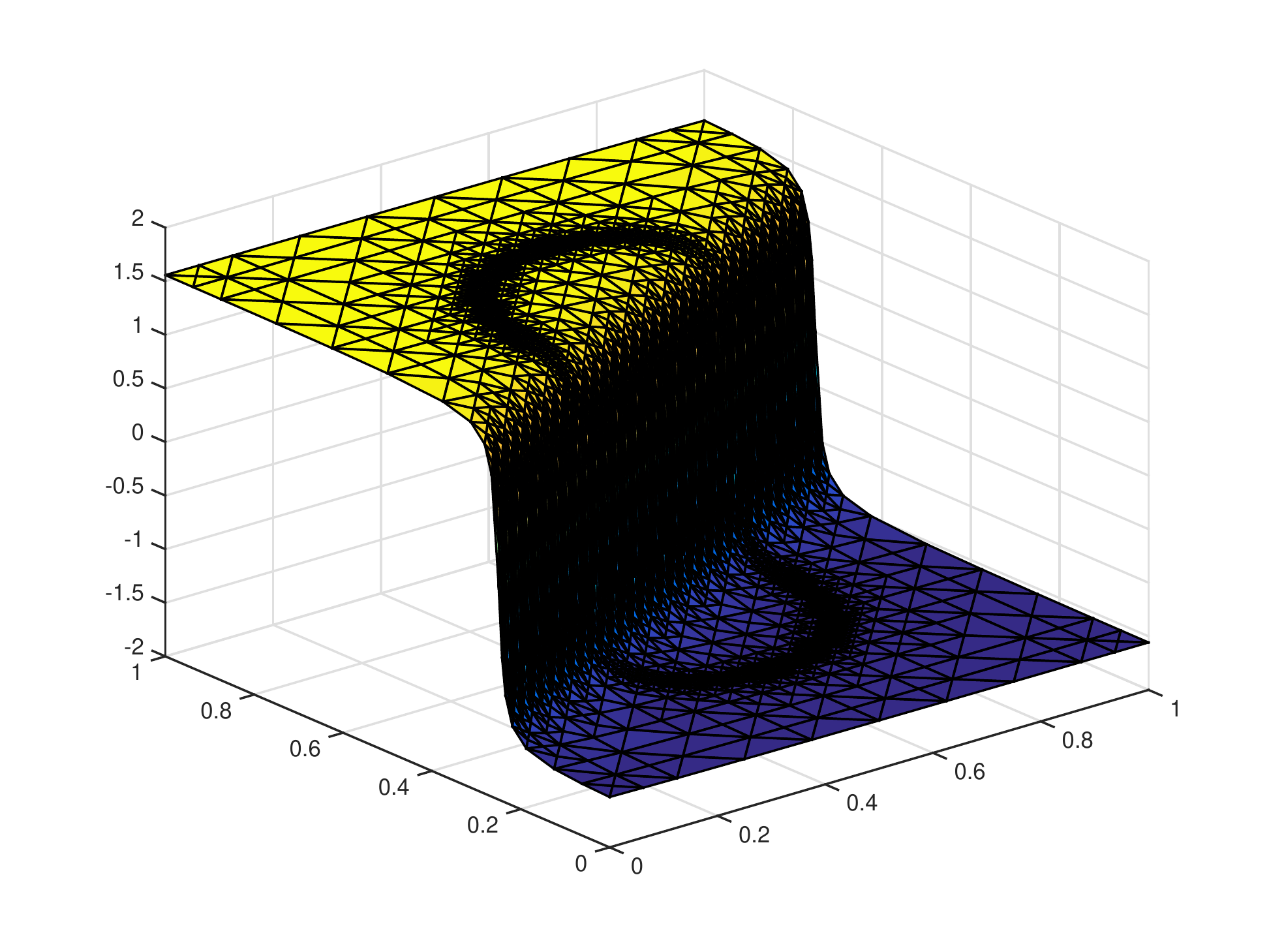}
\includegraphics[width=7cm,height=7cm]{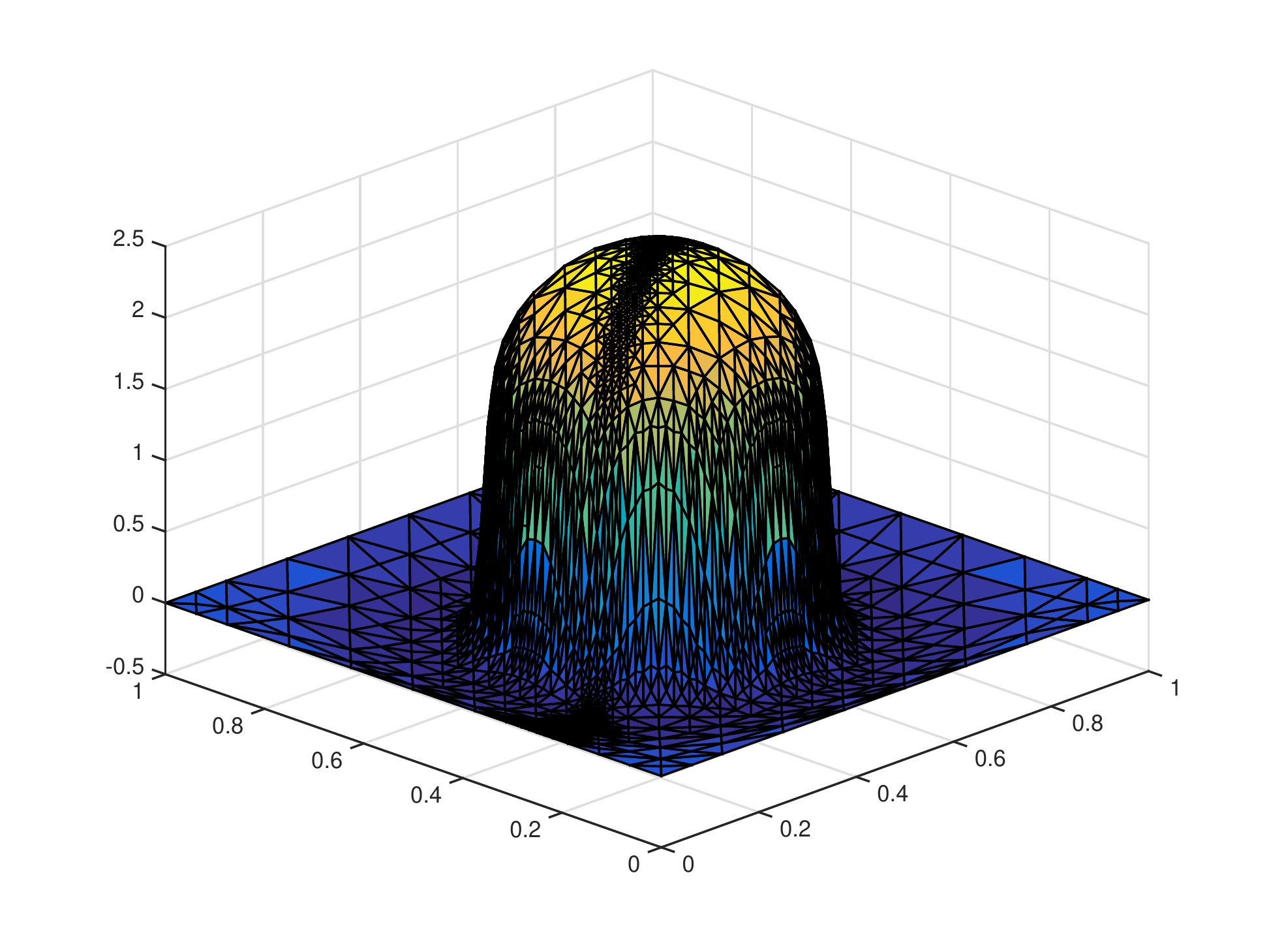}
%\hspace{0.1cm}\includegraphics[width=6cm,height=5cm]{mesh_12.eps}
\caption{The profiles of the numerical state (left) and adjoint state (right) on adaptively refined mesh with $\theta=0.3$ and $20$ adaptive loops for Example \ref{Exm:1} generated by Algorithm \ref{Alg:3.1}.}
\label{fig:1}
\end{figure}

\begin{figure}[ht]
\centering
\includegraphics[width=7cm,height=7cm]{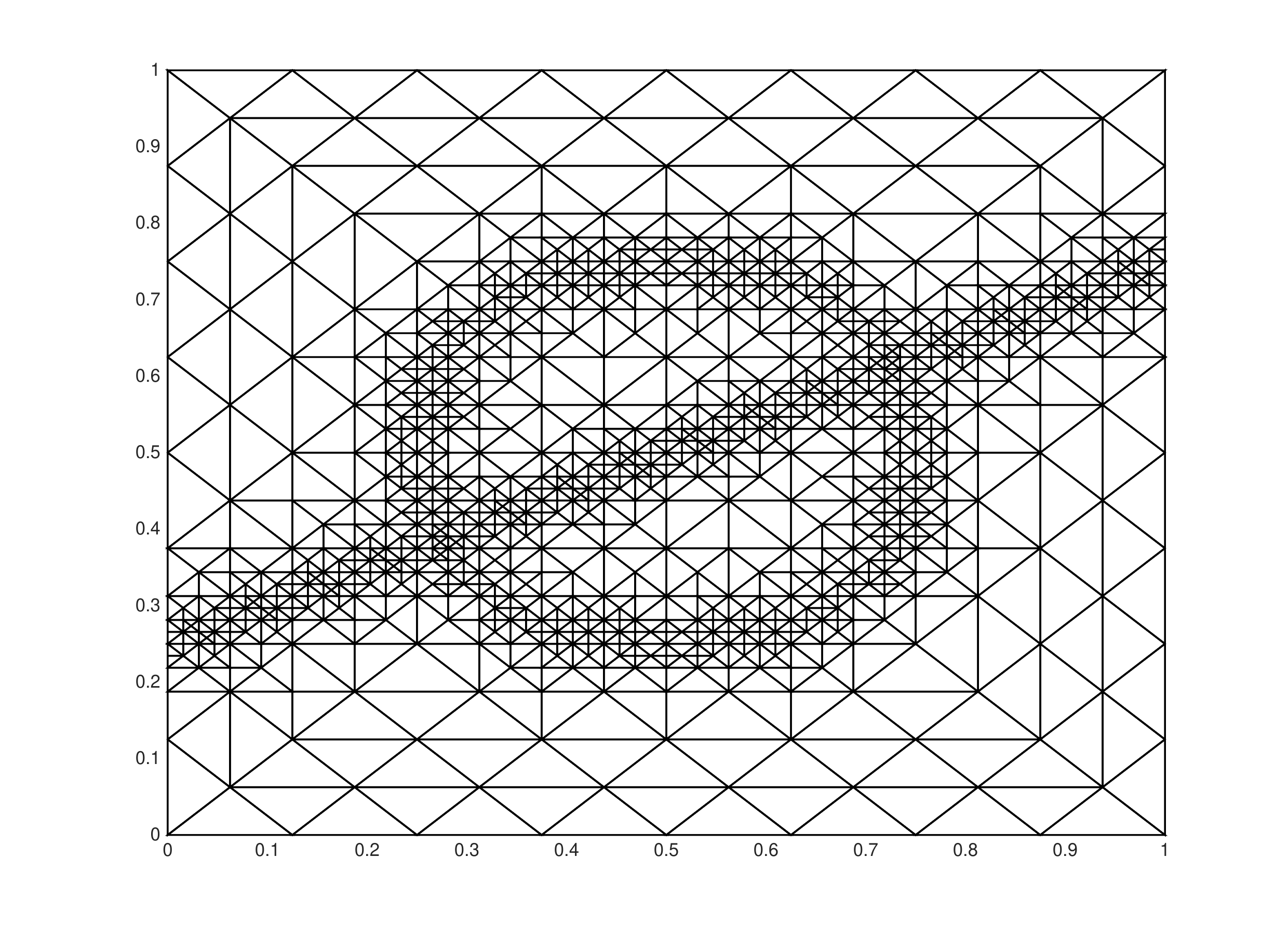}
\includegraphics[width=7cm,height=7cm]{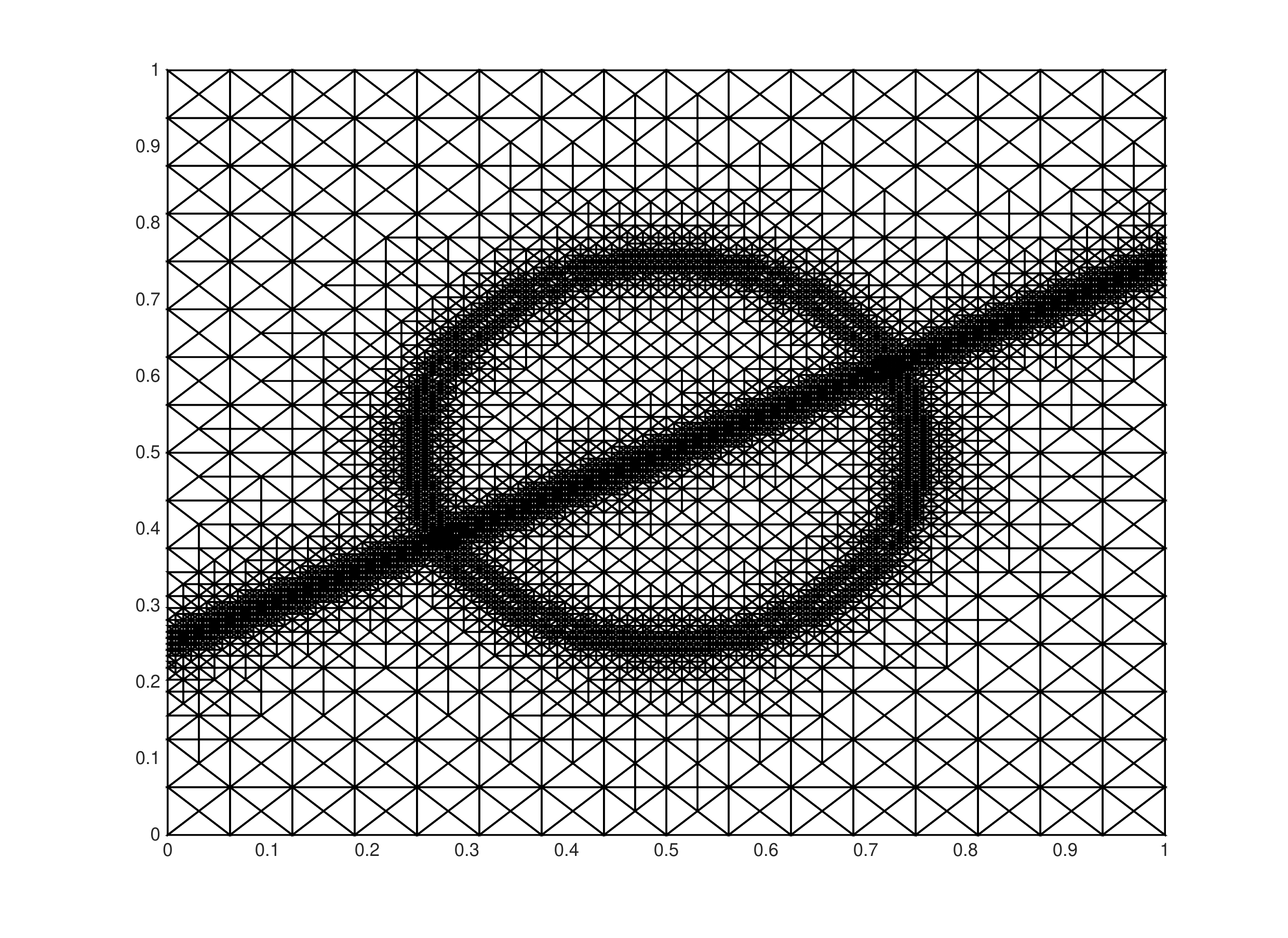}
%\hspace{0.1cm}\includegraphics[width=6cm,height=5cm]{mesh_12.eps}
\caption{The adaptive meshes after $15$ steps (left) and $25$ steps for Example \ref{Exm:1} generated by Algorithm \ref{Alg:3.1} with $\theta =0.3$.}
\label{fig:2}
\end{figure}

\begin{figure}[ht]
\centering
\includegraphics[width=7cm,height=7cm]{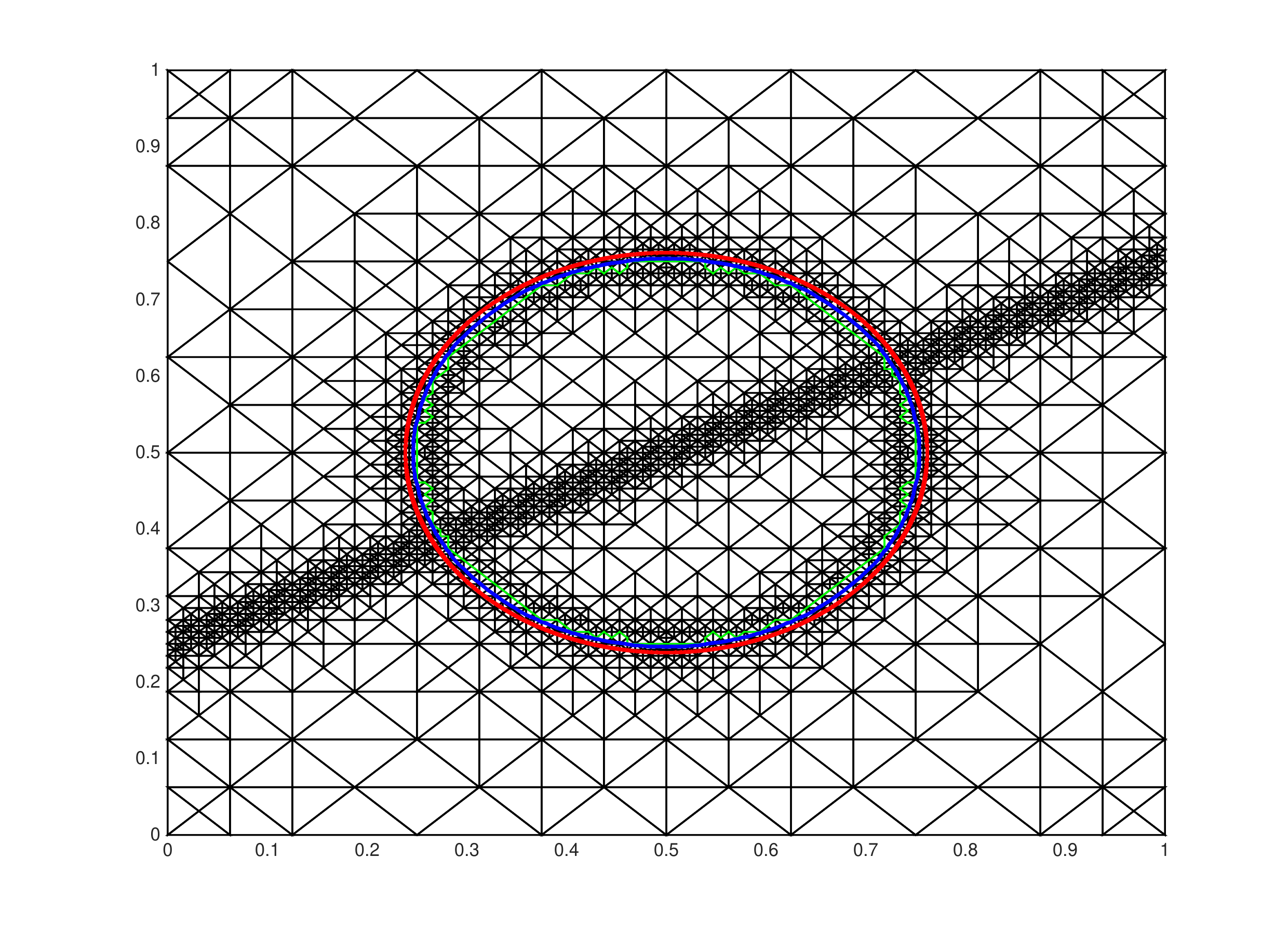}
\includegraphics[width=7cm,height=7cm]{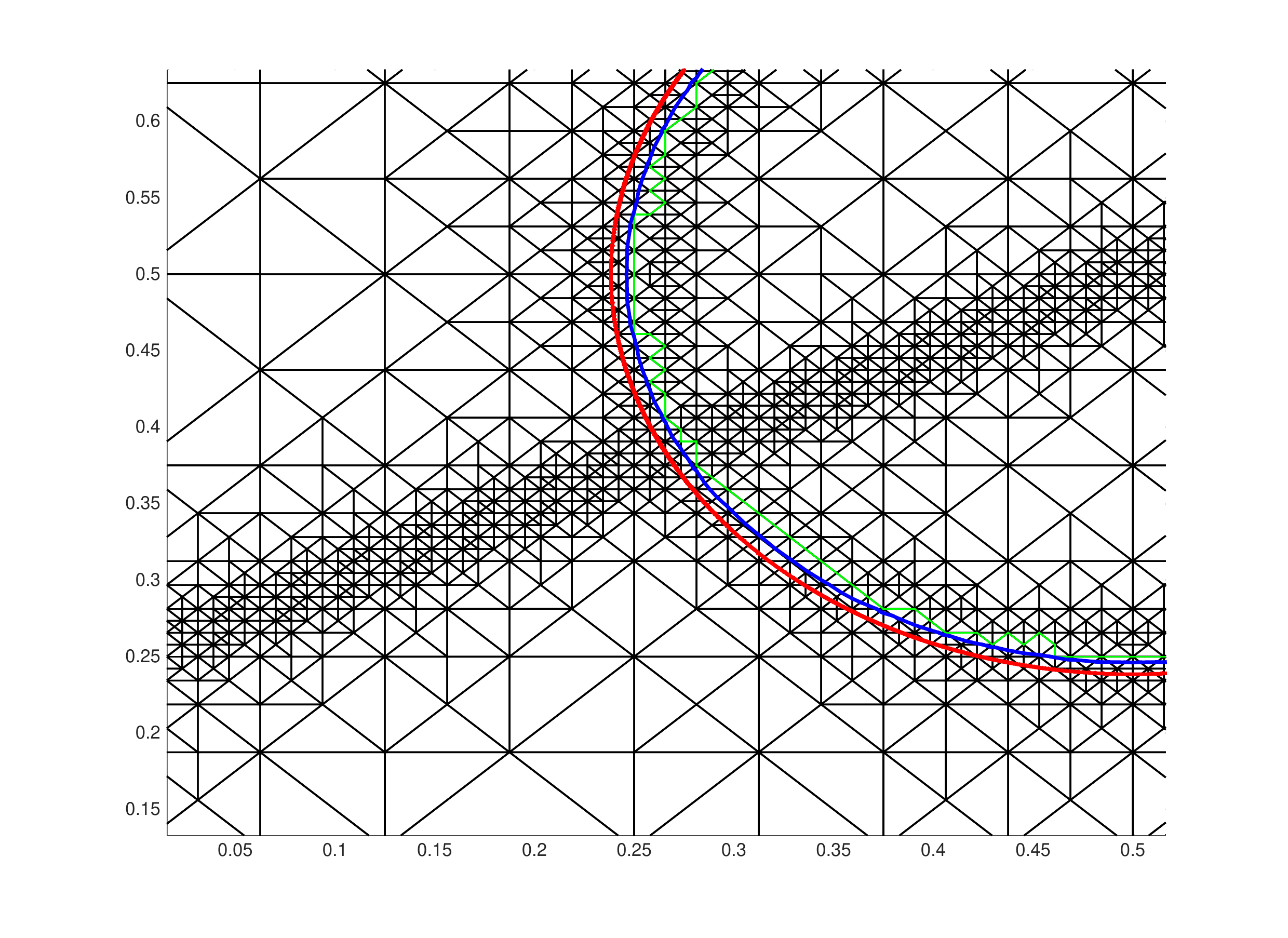}
%\hspace{0.1cm}\includegraphics[width=6cm,height=5cm]{mesh_12.eps}
\caption{The adaptive meshes after $20$ steps (left) and zoom in near the left below corner for Example \ref{Exm:1} generated by Algorithm \ref{Alg:3.1} with $\theta =0.3$. The red line depicts the boarder of the active set of the continuous solution, the blue line depicts the boarder of the active set when using variational discretization, and the green line depicts the boarder of the active set obtained by using piecewise linear, continuous controls. }
\label{fig:3}
\end{figure}

\begin{figure}[ht]
\centering
\includegraphics[width=7cm,height=7cm]{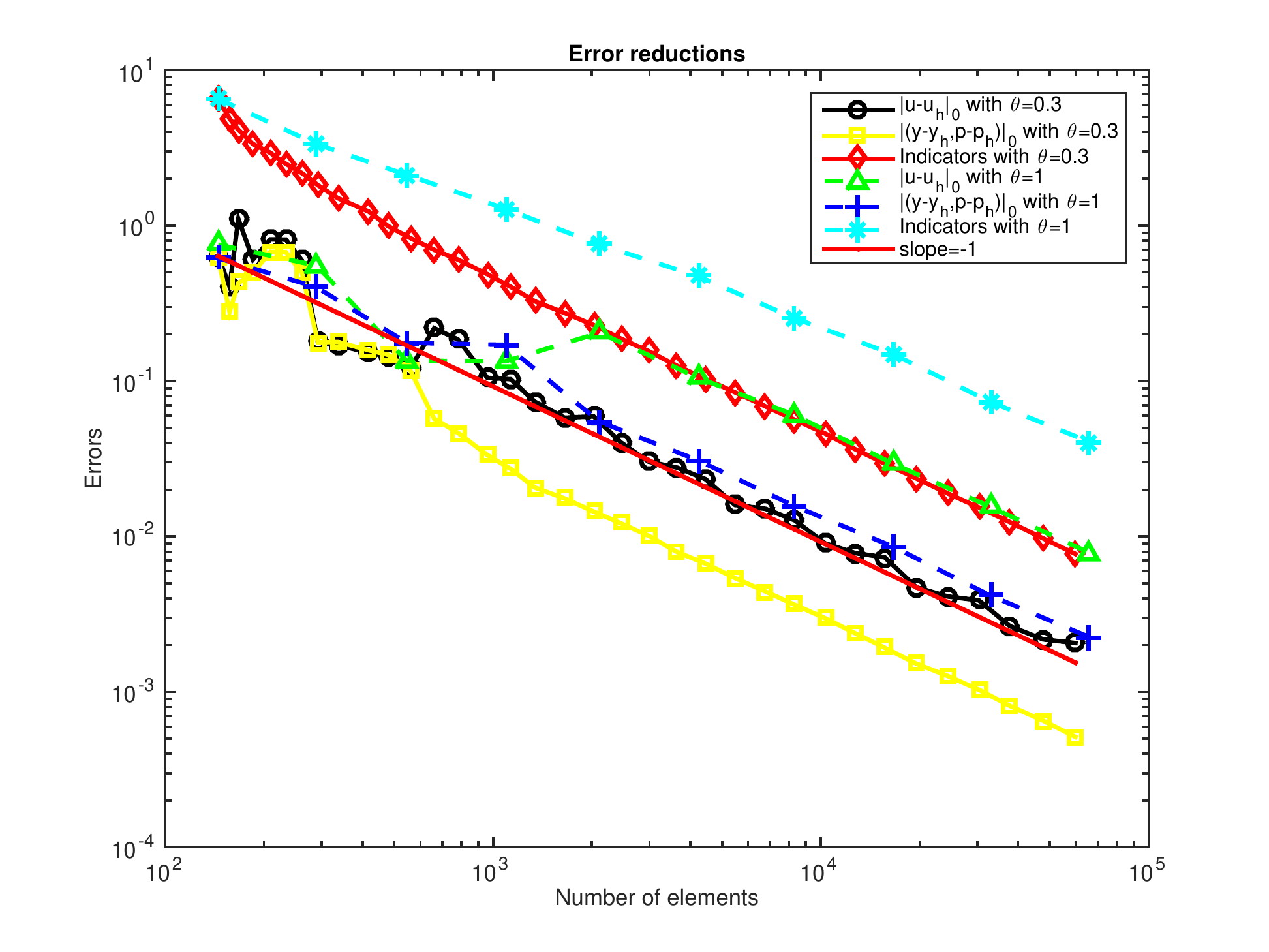}
\includegraphics[width=7cm,height=7cm]{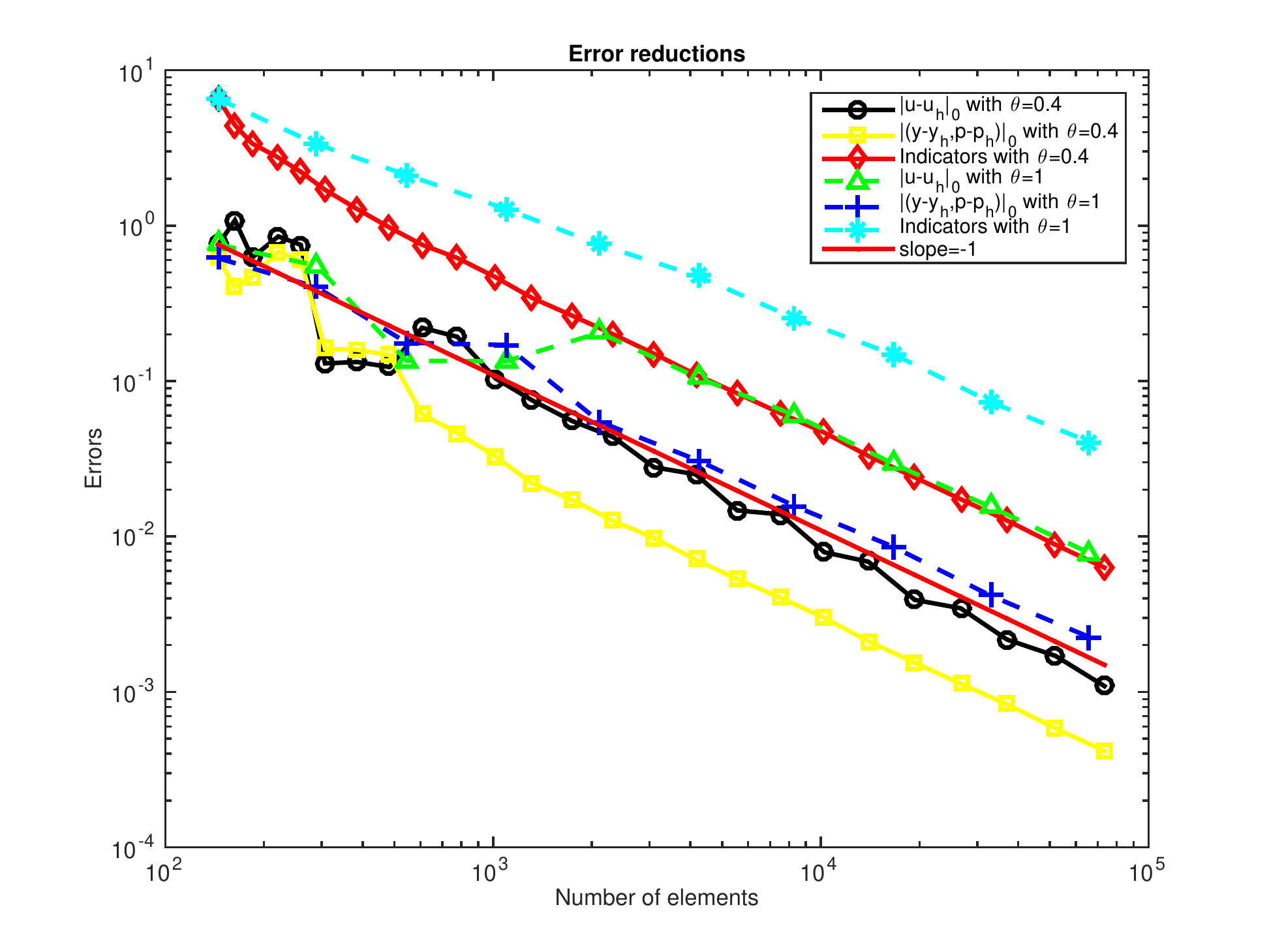}
%\hspace{0.1cm}\includegraphics[width=6cm,height=5cm]{mesh_12.eps}
\caption{The comparisons of convergence history of the optimal control, state and adjoint state and the error estimators on uniformly refined meshes ($\theta$=1) and adaptively refined meshes with $\theta =0.3$ (left) and $\theta=0.4$ (right), respecvitely, for Example \ref{Exm:1} generated by Algorithm \ref{Alg:3.1}.}
\label{fig:4}
\end{figure}

%\begin{figure}[ht]
%\centering
%\includegraphics[width=7cm,height=7cm]{Ex1_error_1_10.eps}
%\includegraphics[width=7cm,height=7cm]{Ex1_error_04_25.eps}
%%\hspace{0.1cm}\includegraphics[width=6cm,height=5cm]{mesh_12.eps}
%\caption{The convergence history of the optimal control, state and adjoint state on uniformly refined meshes (left), and the convergence of the errors and estimators on adaptively refined meshes (right) for Example \ref{Exm:1} generated by Algorithm \ref{Alg:3.1}.}
%\label{fig:5}
%\end{figure}

%\begin{Example}\label{Exm:2}
%In the second example we consider an optimal control problem without explicit solutions. We set $\Omega=(-1,1)^2$, $\alpha = 10^{-3}$, $a=-10$ and $b=10$. The desired state $y_d$ is chosen as $10$, $1$, $-10$ and $-1$ in the first, second, third and fourth quadrant, respectively.
%\end{Example}

\begin{Example}\label{Exm:2}
In the second example we consider an optimal control problem without explicit solutions. We set $\Omega=(0,1)^2$, $\alpha = 10^{-2}$, $a=10$ and $b=15$. We choose the singular $f$  and desired state $y_d$  as
\begin{eqnarray}
f=\sqrt{(x_1-1)^2+(x_2-1)^2}^{-1.5},\quad y_d=\sqrt{x_1^2+x_2^2}^{-1.9}.\nonumber
\end{eqnarray}
Note that $f$ and $y_d$ are not in $L^2(\Omega)$ as we assumed in the paper, the theory we derived above does not apply in this case. However, since the singularity is only located in two points, only some simple modifications need be made in the computations. We intend to use this example to explain that even in convex domain the solution of elliptic equation may have singularity caused by singular data so the adaptive FEM may also find applications.  
\end{Example}

In Figure \ref{fig:5} we plot the profiles of the state and adjoint state variables on adaptively refined mesh with $\theta=0.3$ and $20$ adaptive loops. Since $f$ and $y_d$ have singularities near the points $(0,0)$ and $(1,1)$, respectively, we can observe the corresponding singularities for the state and adjoint state. In Figure \ref{fig:6} and \ref{fig:7} we show the adaptive meshes after $15$, $20$ and $25$ adaptive loops with D\"orfler's marking parameter $\theta=0.3$. We can see that the meshes concentrate on the points $(0,0)$ and $(1,1)$ where the singularities of the solutions are located. Moreover, in Figure \ref{fig:7} we plot the active sets of the discrete controls by using variational control discretization and piecewise linear and continuous finite element approximations. We can observe that the active set of variational discretized control crosses the elements and can give better results, as indicated in Example \ref{Exm:1}. Since we do not have explicit solutions, in Figure \ref{fig:8} we only show the comparisons of convergence history of the error estimators on uniformly refined meshes ($\theta$=1) and adaptively refined meshes with $\theta =0.3$ and $\theta=0.4$, respectively. We can observe the optimal second order convergence for the reduction of the error estimators for the adaptive refinement which reflects the optimal convergence of the $L^2$-norm of the control, the state and adjoint state because the a posteriori error estimate is reliable and efficient. Note that only reduced orders are observed for the uniform refinement, which is due to the singularity of the solutions caused by singular data.

\begin{figure}[ht]
\centering
\includegraphics[width=7cm,height=7cm]{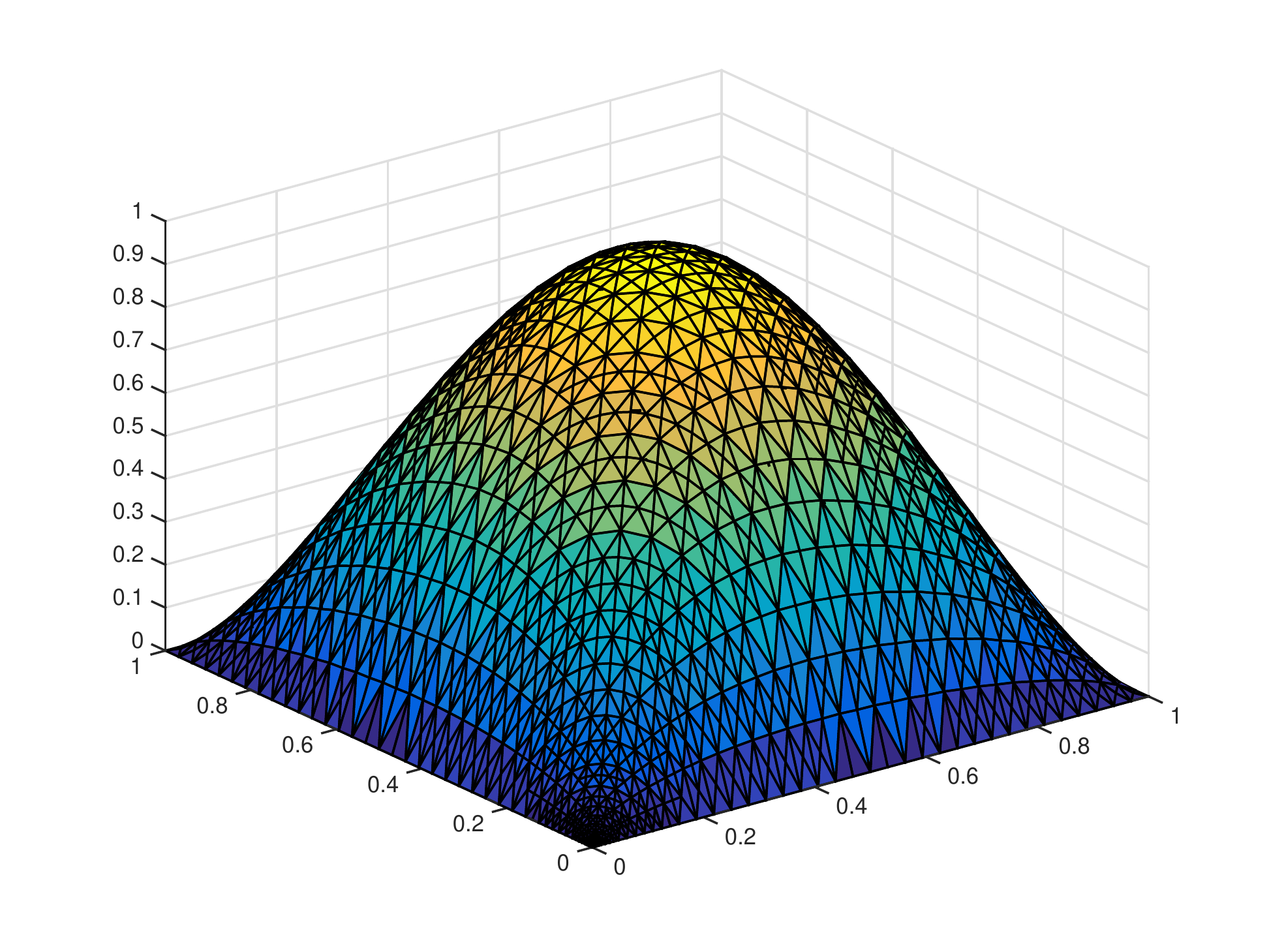}
\includegraphics[width=7cm,height=7cm]{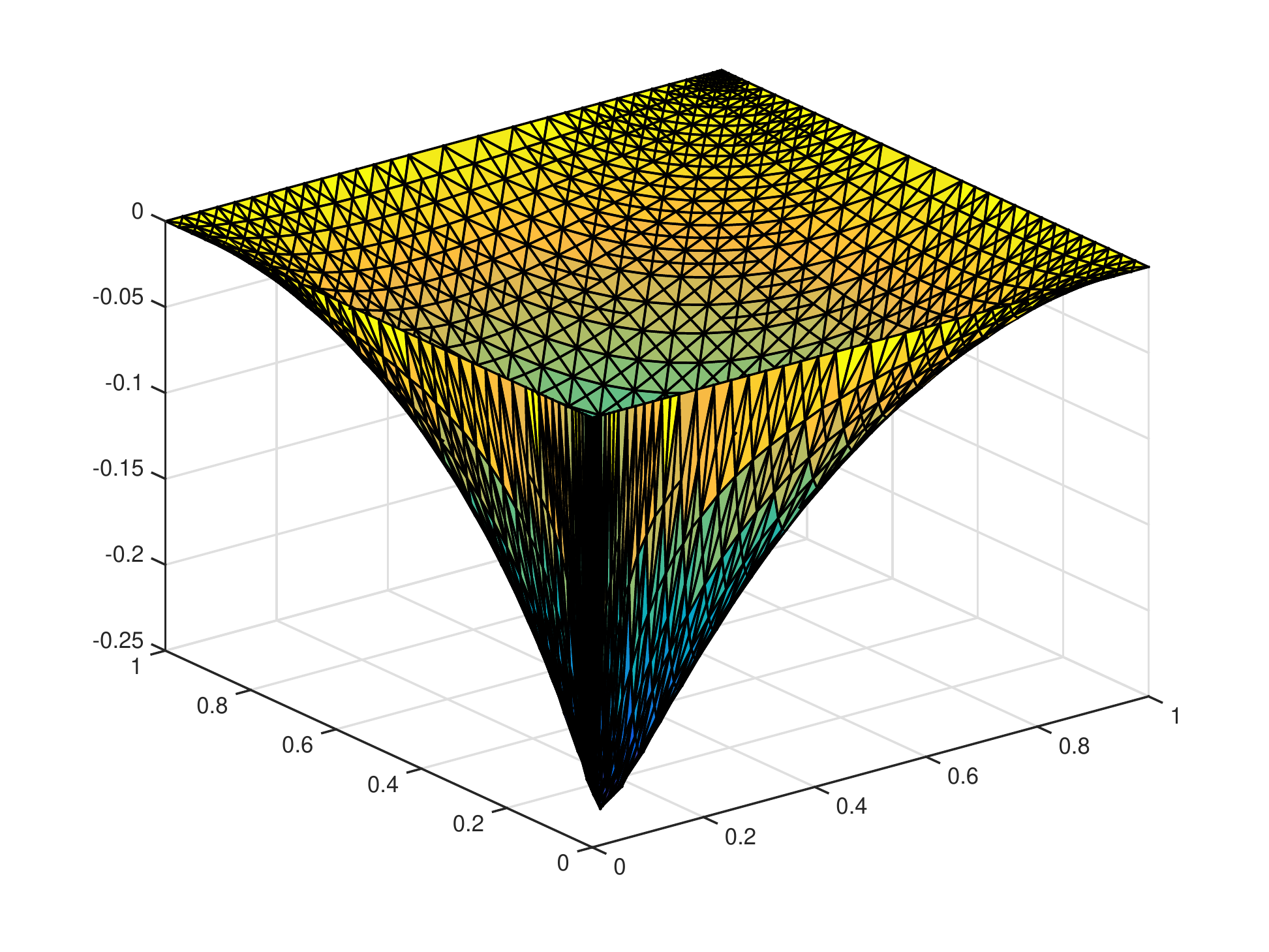}
%\hspace{0.1cm}\includegraphics[width=6cm,height=5cm]{mesh_12.eps}
\caption{The profiles of the numerical state (left) and adjoint state (right) on adaptively refined mesh with $\theta=0.3$ and $20$ adaptive loops for Example \ref{Exm:2} generated by Algorithm \ref{Alg:3.1}.}
\label{fig:5}
\end{figure}

\begin{figure}[ht]
\centering
\includegraphics[width=7cm,height=7cm]{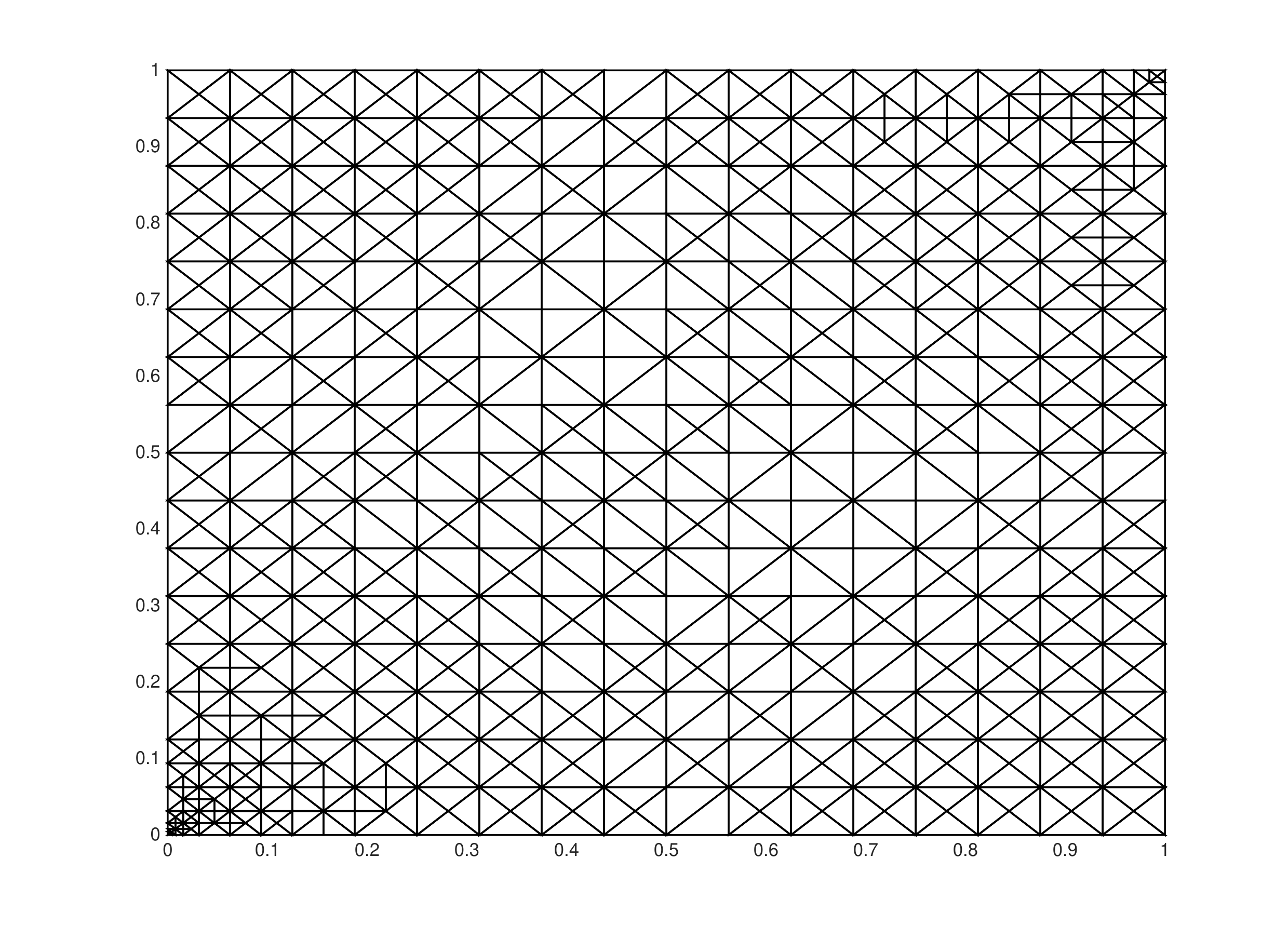}
\includegraphics[width=7cm,height=7cm]{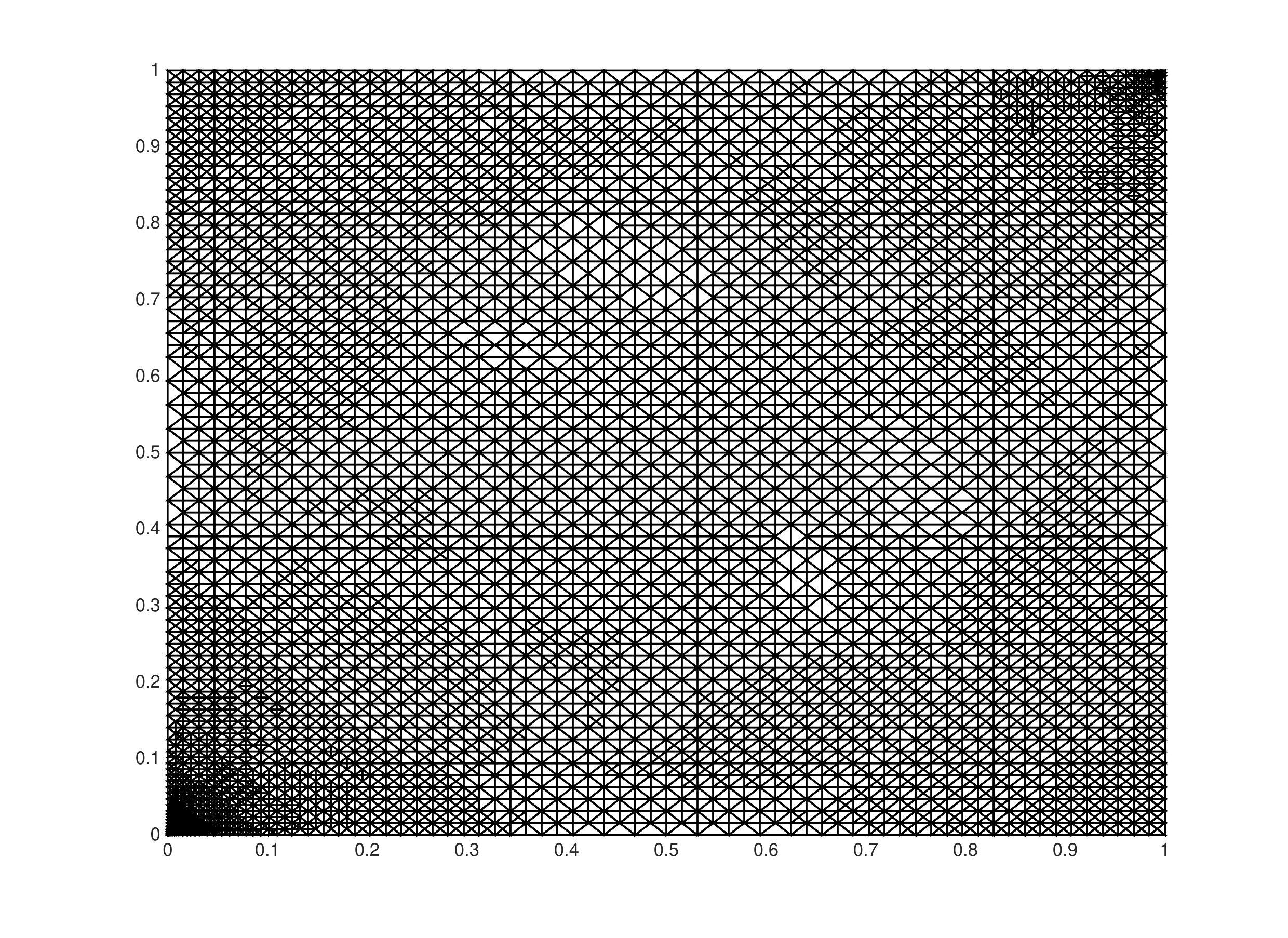}
%\hspace{0.1cm}\includegraphics[width=6cm,height=5cm]{mesh_12.eps}
\caption{The adaptive meshes after $15$ steps (left) and $25$ steps for Example \ref{Exm:2} generated by Algorithm \ref{Alg:3.1} with $\theta =0.3$.}
\label{fig:6}
\end{figure}

\begin{figure}[ht]
\centering
\includegraphics[width=7cm,height=7cm]{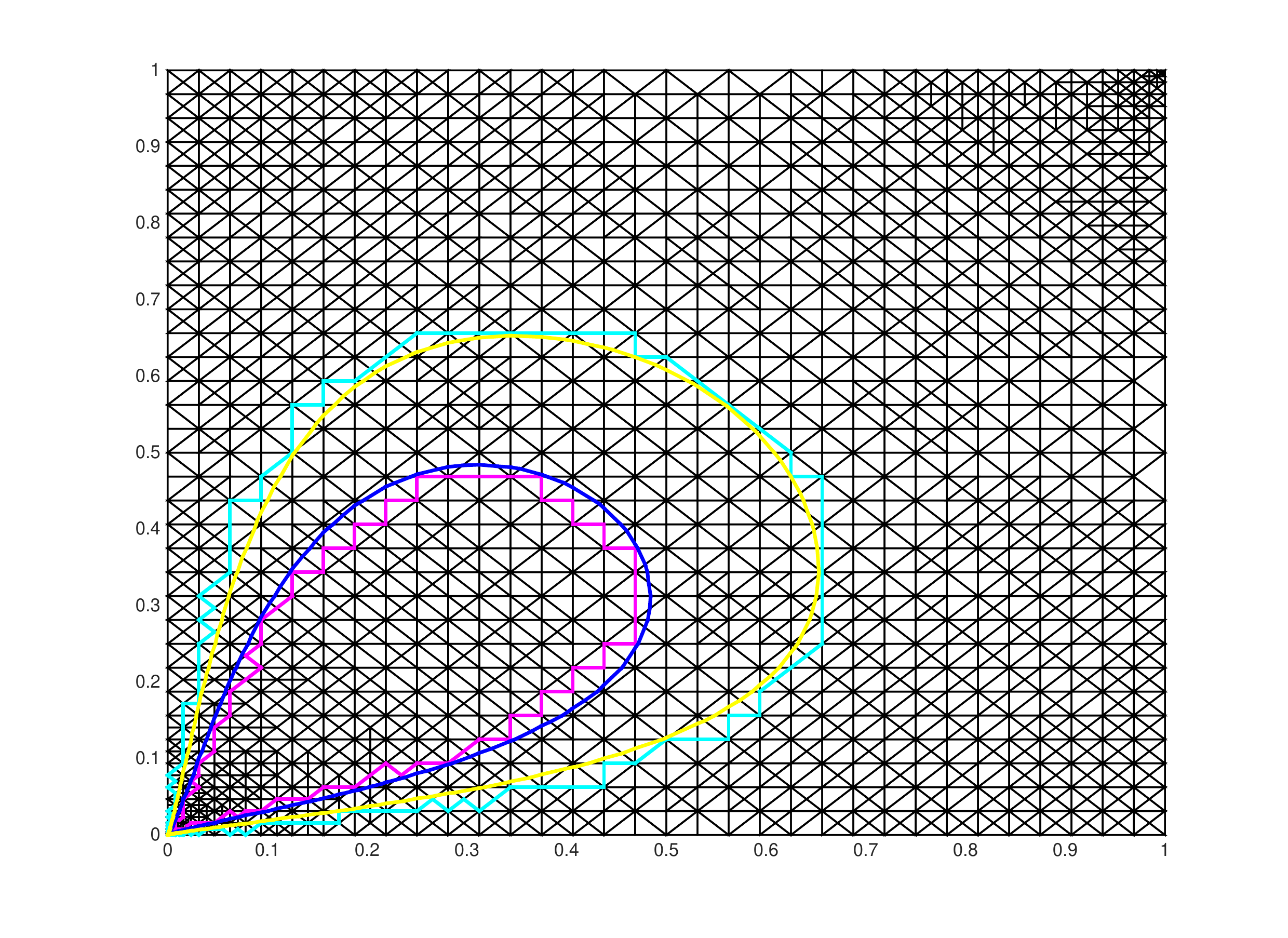}
\includegraphics[width=7cm,height=7cm]{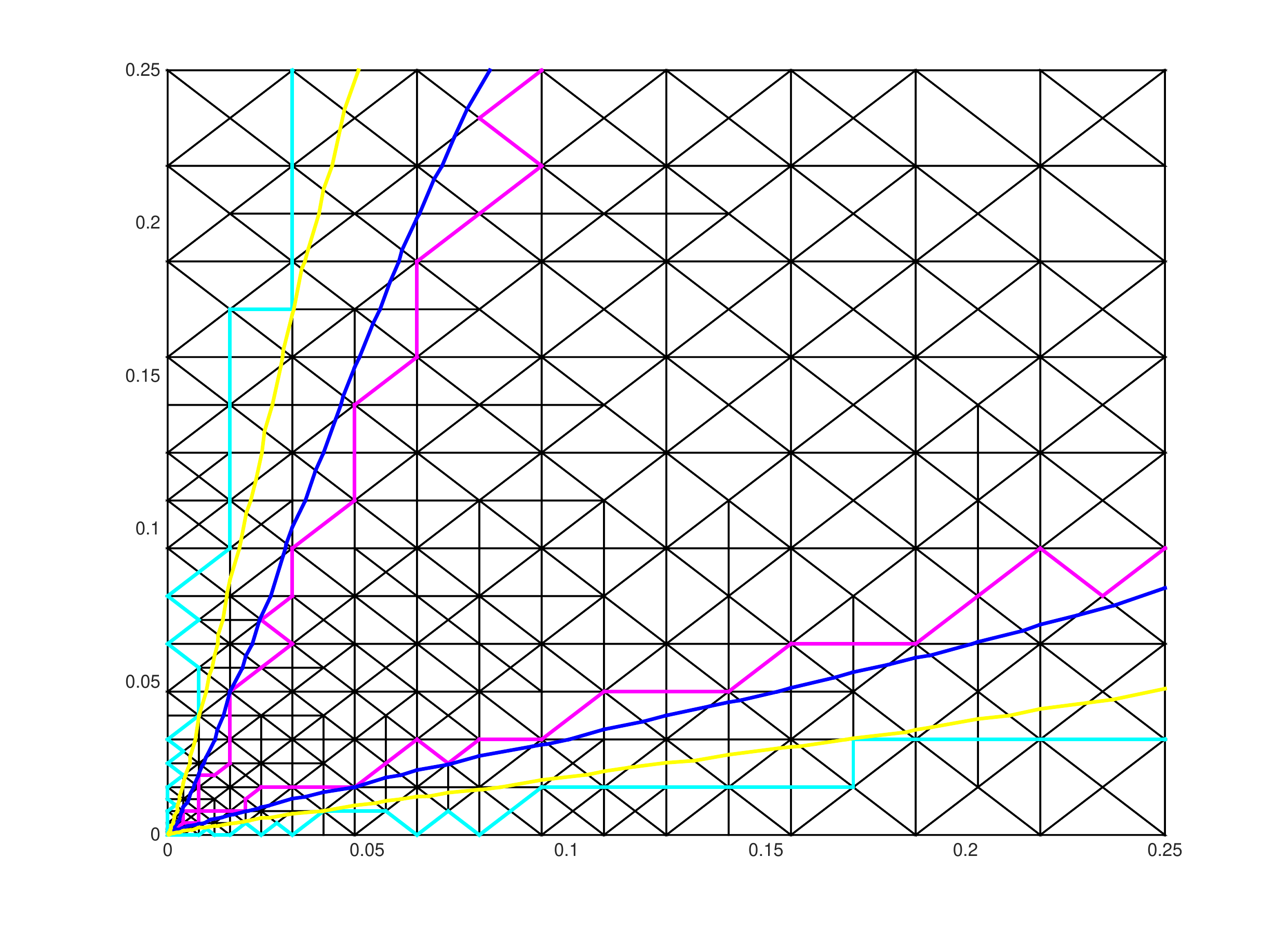}
%\hspace{0.1cm}\includegraphics[width=6cm,height=5cm]{mesh_12.eps}
\caption{The adaptive meshes after $20$ steps (left) and zoom in near the origin for Example \ref{Exm:2} generated by Algorithm \ref{Alg:3.1} with $\theta =0.3$. The blue (upper bound) and yellow (lower bound) lines depict the boarder of the active set of the discrete solution when using variational discretization, and the pink (upper bound) and cyan (lower bound) lines depict the boarder of the active set obtained by using piecewise linear, continuous controls. }
\label{fig:7}
\end{figure}

\begin{figure}[ht]
\centering
\includegraphics[width=7cm,height=7cm]{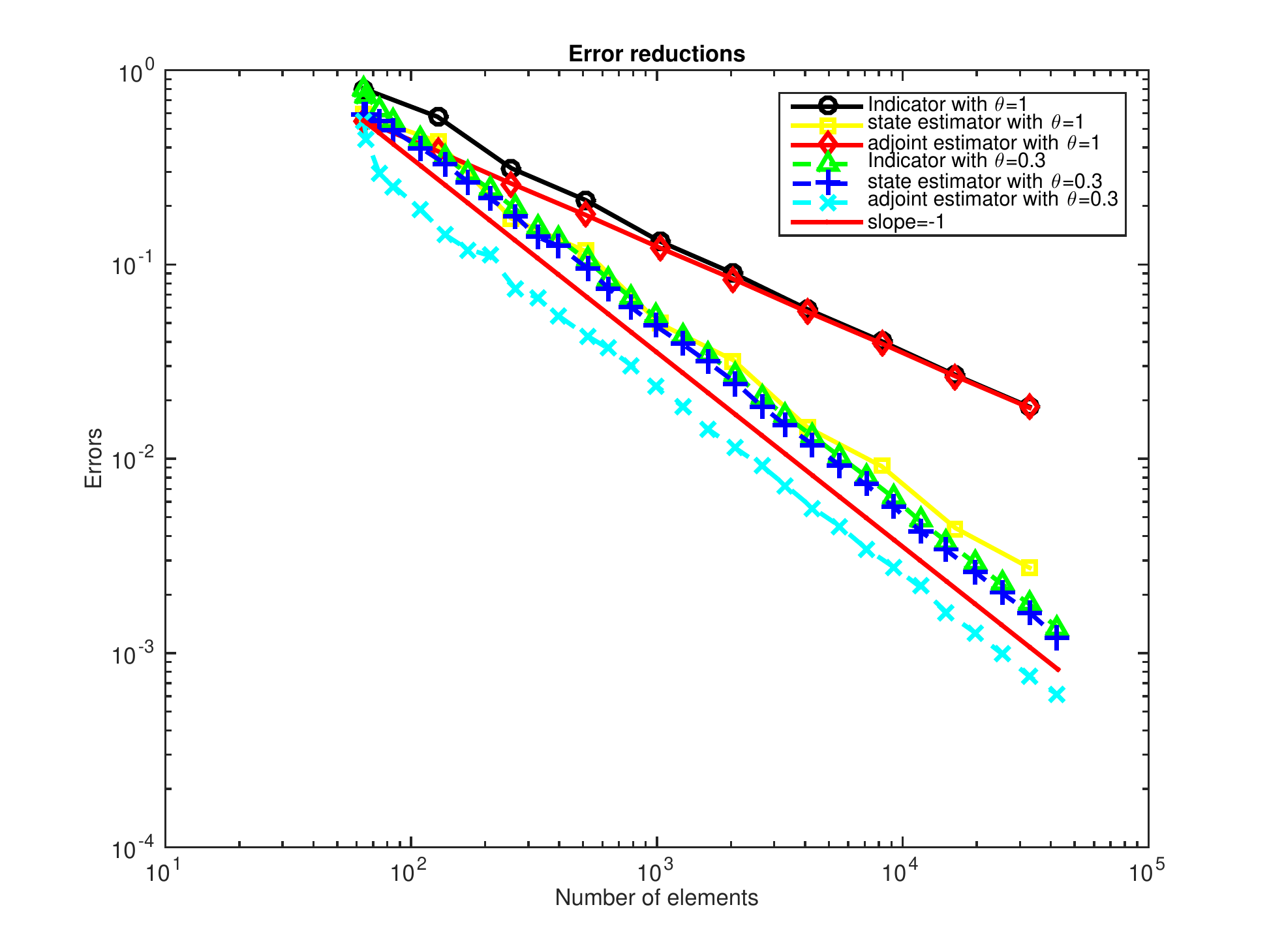}
\includegraphics[width=7cm,height=7cm]{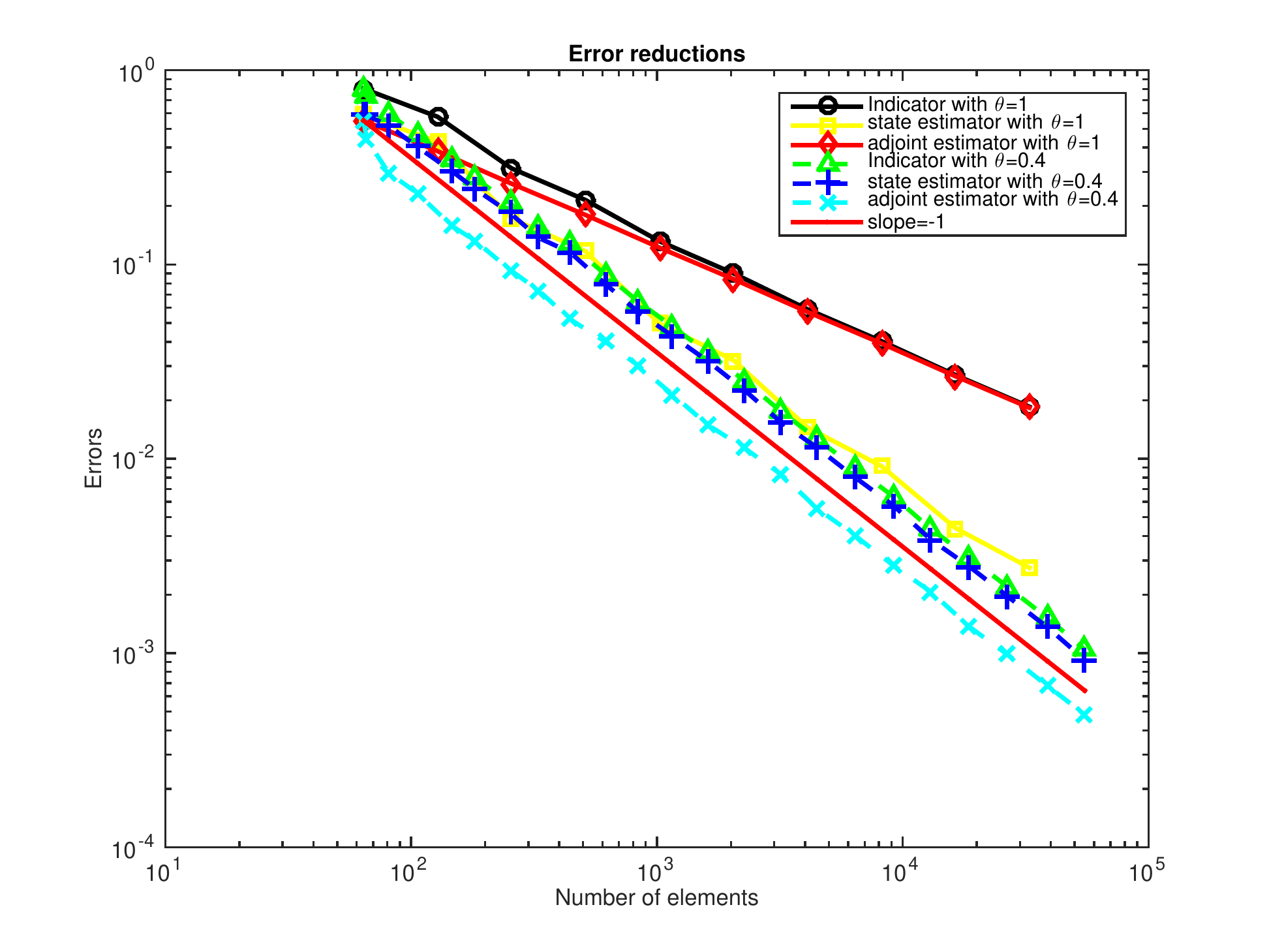}
%\hspace{0.1cm}\includegraphics[width=6cm,height=5cm]{mesh_12.eps}
\caption{The comparisons of convergence history of the optimal control, state and adjoint state and the error estimators on uniformly refined meshes ($\theta$=1) and adaptively refined meshes with $\theta =0.3$ (left) and $\theta=0.4$ (right), respecvitely, for Example \ref{Exm:2} generated by Algorithm \ref{Alg:3.1}.}
\label{fig:8}
\end{figure}

%%%%%%%%%%%%%%%%%%%%%%%%%%%%%%%%%%%%%%%%%%

\section*{Acknowledgements}
The first author was supported by the National Basic Research Program of China under grant 2012CB821204 and the National Natural Science Foundation of China under grants 11671391 and 91530204. The second author acknowledged the support of the National Natural Science Foundation of China under grants 11571356 and 91530204. The third author acknowledged the support of the National Natural Science Foundation of China under grant 11301311.

%%%%%%%%%%%%%%%%%%%%%%%%%%%%%%%%%%%%%%%%%%%%

 \medskip

\end{document}